\newcommand{\bbN}{{\mathbb N}}
\newcommand{\bbQ}{{\mathbb Q}}
\newcommand{\bbZ}{{\mathbb Z}}
\newcommand{\cO}{{\mathcal O}}
\newcommand{\cU}{{\mathcal U}}
\newcommand{\Ll}{{\scriptscriptstyle{L}}}
\newcommand{\Kk}{{\scriptscriptstyle{K}}}
\newcommand{\Ff}{{\scriptscriptstyle{F}}}
\newcommand{\Hh}{{\scriptscriptstyle{H}}}
\newcommand{\LK}{{\scriptscriptstyle{L/K}}}
\newcommand{\FK}{{\scriptscriptstyle{F/K}}}
\newcommand{\MK}{{\scriptscriptstyle{M/K}}}
\newcommand{\EK}{{\scriptscriptstyle{E/K}}}
\newcommand{\LjK}{{\scriptscriptstyle{L_j/K}}}
\newcommand{\LLj}{{\scriptscriptstyle{L/L_j}}}
\newcommand{\LLs}{{\scriptscriptstyle{L/L_s}}}
\newcommand{\LHK}{{\scriptscriptstyle{L^{\Hh}/K}}}
\newcommand{\LsK}{{\scriptscriptstyle{L_s/K}}}
\newcommand{\Ls}{{\scriptscriptstyle{L_s}}}
\newcommand{\LH}{{\scriptscriptstyle{L^{\Hh}}}}
\newcommand{\LLH}{{\scriptscriptstyle{L/{L^{\Hh}}}}}
\newcommand{\KnrK}{{\scriptscriptstyle{K_{nr}/K}}}
\newcommand{\LKnr}{{\scriptscriptstyle{L/K_{nr}}}}
\author{Rachel Newton}
\title{Explicit local reciprocity for tame extensions}
\begin{document}

\begin{abstract}
We consider a tamely ramified abelian extension of local fields of degree $n$, without assuming the presence of the $n$th roots of unity in the base field. We give an explicit formula which computes the local reciprocity map in this situation.
\end{abstract}
\maketitle
\tableofcontents

\newpage

\section{Introduction}
Throughout this paper we call a field $K$ a `local field' if it is complete with respect to the topology induced by a discrete valuation and its residue field $k$ is finite. We write $\mathfrak{v}_{\Kk}$ for the discrete valuation on a field $K$, normalised so that the homomorphism $\mathfrak{v}_{\Kk}:K^*\rightarrow \bbZ$ is surjective. If $F/K$ is a finite extension with ramification degree $e=e_{\FK}$, we also use $\mathfrak{v}_{\Kk}$ to denote the unique extension to a valuation on $F$ with values in $\frac{1}{e}\bbZ$.

Let $L/K$ be a finite abelian extension of local fields. Let $N_{\LK}$ denote the norm map from $L$ to $K$. Then the local reciprocity map $\theta_{\LK}$, which will be defined in Section \ref{sectiondefnlocrec}, is a canonical isomorphism
$$\theta_{\LK}:\frac{K^*}{N_{\LK}(L^*)}\tilde{\longrightarrow} \mathrm{Gal}(L/K).$$
In the case where $L/K$ is an unramified extension, the local reciprocity map is completely determined by
$$\theta_{\LK}: \pi_{\Kk}\mapsto \mathrm{Frob}_{\LK}$$
where $\pi_{\Kk}$ is any uniformiser of $K$ and $\mathrm{Frob}_{\LK}\in\mathrm{Gal}(L/K)$ is the unique element which acts on the residue field $\ell$ of $L$ by $x\mapsto x^{|k|}$.

Now consider the case where $L/K$ is cyclic but the ramification index $e_{\LK}$ is non-trivial. To determine local reciprocity, we just need to find the image under $\theta_{\LK}$ of a generator of $K^*/N_{\LK}(L^*)$. However, in contrast with the unramified case, there is no canonical choice of generator.

The main result of this paper is the following formula which allows us to compute local reciprocity when the extension is abelian and tamely ramified.

\theoremstyle{plain} \newtheorem{generalthm}{Theorem}[section]
\begin{generalthm}
\label{generalthm}
Let $K$ be a local field with residue field $k$ of order $p^t$ and let $L/K$ be a tamely ramified abelian extension. Let $e=e_{\LK}$ denote the ramification index of $L/K$. Fix a uniformiser, $\pi_{\Kk}$, of $K$. Then the local reciprocity map $\theta_{\LK}$ satisfies
$$\frac{\theta_{\LK}\left(u\pi_{\Kk}^i\right)(\beta)}{\beta}\equiv \frac{\beta^{(p^{ti}-1)}}{((-1)^{(e-1)}\pi_{\Kk})^{(p^{ti}-1)\mathfrak{v}_{\Kk}(\beta)}u^{(p^t-1)\mathfrak{v}_{\Kk}(\beta)}}\ \ \mathrm{mod}\ \pi_{\Ll}$$
for all $i\in\bbN$, for all $u\in\cO_{\Kk}^*$ and for all $\beta\in L^*$.
\end{generalthm}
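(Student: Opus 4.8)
The plan is to reduce, by multiplicativity in $\beta$, to the case of a uniformiser, and then to exploit that — after adjoining an unramified piece — the extension becomes a genuine Kummer extension $K(\pi_{\Kk}^{1/e})/K$, on which $\theta$ is computed by the explicit tame Hilbert symbol. First I would record the structural facts that make this possible. Since $\mathrm{Gal}(L/K)$ is abelian and Frobenius acts on the cyclic inertia subgroup $I\trianglelefteq\mathrm{Gal}(L/K)$ of order $e$ by $\sigma\mapsto\sigma^{p^t}$, we get $\sigma^{p^t-1}=1$ for all $\sigma\in I$, hence $e\mid p^t-1$; since Frobenius then fixes $\mu_e$ pointwise and, together with $I$, generates $\mathrm{Gal}(L/K)$, also $\mu_e\subset K$. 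In particular $e\mid p^{ti}-1$ for every $i$, which is what makes the exponents in the statement integral. Because $\mu_e\subset K$, the field $K(\varpi)$ with $\varpi^e=\pi_{\Kk}$ is a totally tamely ramified abelian extension of degree $e$; writing $L=T(\delta)$ with $T$ the maximal unramified subextension and $\delta^e$ a uniformiser of $T$ (Kummer theory over $T$, using $\mu_e\subset T$), and observing that every unit of $K_{nr}$ is an $e$-th power there, one gets $K_{nr}(\delta)=K_{nr}(\pi_{\Kk}^{1/e})$ and hence $L\subseteq K_{nr}\cdot K(\varpi)$. Set $L^{+}:=L\cdot K(\varpi)$; it is finite abelian and tamely ramified over $K$, both $L$ and $L^{+}$ have ramification index $e$ over $K$ so $L^{+}/L$ is unramified, and $\varpi$ is a uniformiser of $L^{+}$ lying in the subfield $K(\varpi)$.

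Both sides of the asserted congruence are multiplicative in $\beta$: the left side because $\theta_{\LK}(u\pi_{\Kk}^i)\in\mathrm{Gal}(L/K)$, the right side because $\mathfrak{v}_{\Kk}$ is additive and the exponents are linear in $\mathfrak{v}_{\Kk}(\beta)$. I would therefore prove the formula for $L^{+}/K$ for $\beta$ ranging over a generating set of $(L^{+})^{*}$, namely the principal units $1+\pi_{L^{+}}\cO_{L^{+}}$, the roots of unity of order prime to $p$, and the uniformiser $\varpi$; once it holds for $L^{+}/K$ and all $\beta\in(L^{+})^{*}$ it holds for $\beta\in L^{*}\subseteq(L^{+})^{*}$, and since $L^{+}/L$ is unramified, reduction modulo a uniformiser of $L$ is the restriction of reduction modulo that same uniformiser in $L^{+}$ (both are $\equiv$ in $\ell^{*}\hookrightarrow(\ell^{+})^{*}$), so the statement for $L/K$ follows. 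For a principal unit $\beta$ both sides are $\equiv 1\bmod\pi_{\Ll}$ (use $\mathfrak{v}_{\Kk}(\beta)=0$, $\bar\beta=1$, and that $\theta$ preserves $1+\pi\cO$). For a root of unity $\beta$ of order prime to $p$ the left side is controlled by the action on residue-field roots of unity, which factors through the surjection $\mathrm{Gal}(L^{+}/K)\twoheadrightarrow\mathrm{Gal}(\text{max.\ unram.\ subext.}/K)$ and there equals $\mathrm{Frob}^{\,i}$, i.e.\ $\beta\mapsto\beta^{p^{ti}}$; since $\mathfrak{v}_{\Kk}(\beta)=0$ the right side is $\beta^{p^{ti}-1}$, matching.

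The crux is $\beta=\varpi$. By functoriality of the reciprocity map and $\varpi\in K(\varpi)\subseteq L^{+}$ we have $\theta_{L^{+}/K}(u\pi_{\Kk}^i)(\varpi)=\theta_{K(\varpi)/K}(u\pi_{\Kk}^i)(\varpi)$, and since $K(\varpi)=K(\pi_{\Kk}^{1/e})$ with $\mu_e\subset K$ this equals the degree-$e$ Hilbert symbol $(u\pi_{\Kk}^i,\pi_{\Kk})_e\cdot\varpi$. Plugging into the classical explicit formula for the tame Hilbert symbol (equivalently, reading off the Lubin--Tate action of $\theta(u\pi_{\Kk}^i)$ on the $\pi_{\Kk}$-torsion point $\varpi$) I would obtain
$$\big(u\pi_{\Kk}^i,\pi_{\Kk}\big)_e\ \equiv\ (-1)^{\,i(p^t-1)/e}\,u^{-(p^t-1)/e}\ \ \mathrm{mod}\ \pi_{\Ll},$$
whereas substituting $\mathfrak{v}_{\Kk}(\varpi)=1/e$ and $\varpi^{\,p^{ti}-1}=\pi_{\Kk}^{(p^{ti}-1)/e}$ into the right-hand side of the theorem gives $(-1)^{\,(e-1)(p^{ti}-1)/e}\,u^{-(p^t-1)/e}\bmod\pi_{\Ll}$. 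It then remains to check the elementary congruence $i(p^t-1)/e\equiv(e-1)(p^{ti}-1)/e\pmod 2$: writing $m=(p^t-1)/e$ and using $p^{ti}-1=(p^t-1)(1+p^t+\dots+p^{t(i-1)})$ with $1+p^t+\dots+p^{t(i-1)}\equiv i\pmod 2$ when $p$ is odd, this collapses to $i(p^t-1)\equiv 0\pmod 2$, which is automatic; and when $p=2$ every sign is $\equiv 1\bmod\pi_{\Ll}$ in any case.

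I expect the main obstacle to be not a single deep step but rather: (i) pinning down the normalisation of $\theta$ (here uniformiser $\mapsto$ arithmetic Frobenius, as in the introduction) and the resulting sign conventions in the Hilbert/norm-residue symbol so that the exponents match exactly; and (ii) the passage to $L^{+}$, which is genuinely needed because an abelian tamely ramified $L/K$ is \emph{not} in general an internal compositum of an unramified and a totally ramified subextension (for instance $\mathrm{Gal}(L/K)$ may be cyclic of order $\ell^{2}$ with $e=f=\ell$), so one cannot split $L$ and argue on two factors. The device is that $\varpi$ lives in the honestly-Kummer field $K(\pi_{\Kk}^{1/e})$; verifying $L\subseteq K_{nr}K(\varpi)$ and that $L^{+}/L$ is unramified is exactly where the structural inputs $e\mid p^{t}-1$ and $\mu_e\subset K$ are used.
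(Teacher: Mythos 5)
Your proposal is correct and takes a genuinely different route from the paper's. The paper first decomposes $\mathrm{Gal}(L/K)$ into a direct product of cyclic prime-power groups, proves the formula for each cyclic factor by constructing a cyclic algebra, exhibiting an explicit unramified splitting field, and reading off the Hasse invariant (the ``mixed'' case $1<e<[L:K]$ requiring the most work with the element $\gamma=\alpha v^{-q^{n-d}}$), and finally reassembles the cyclic pieces by induction using the norm-compatibility of $\theta$. You instead enlarge $L$ once and for all to $L^{+}=L\cdot K(\varpi)$ with $\varpi^{e}=\pi_{\Kk}$, observe that $L^{+}/L$ is unramified and $L^{+}/K$ is still tame abelian with the same $e$, and then reduce by multiplicativity of both sides in $\beta$ to the three generator types of $(L^{+})^{*}$ --- principal units, prime-to-$p$ roots of unity, and $\varpi$ --- handling the first two by inspection and $\varpi$ by restricting $\theta_{L^{+}/K}$ to the honest Kummer subfield $K(\varpi)$ and invoking the tame norm residue symbol. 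The structural input is the same (tameness gives $e\mid p^{t}-1$ and $\mu_{e}\subset K$; your Frobenius-acts-by-$p^{t}$-on-inertia argument is a tidy alternative to the Snake Lemma argument in Proposition \ref{ethroot}), but your reduction completely sidesteps both the Galois-theoretic decomposition of Section \ref{abelian} and the mixed-case cyclic algebra computation of Section \ref{mixedcase}: once you know reciprocity for a totally tamely ramified Kummer extension with $\mu_{e}\subset K$ (which is precisely Section \ref{totramcase}, or the classical tame Hilbert symbol) and for unramified extensions, the general tame abelian formula follows by passing to $L^{+}$. What this buys is brevity and conceptual clarity; what it costs is that the tame Hilbert symbol must be pinned down in a convention-compatible way --- you correctly flag this, and your claimed value $(u\pi_{\Kk}^{i},\pi_{\Kk})_{e}\equiv(-1)^{i(p^{t}-1)/e}u^{-(p^{t}-1)/e}$, together with your mod-$2$ check matching it to $(-1)^{(e-1)(p^{ti}-1)/e}u^{-(p^{t}-1)/e}$, agrees with what the paper derives in Section \ref{totramcase} under the normalisation $\pi_{\Kk}\mapsto\mathrm{Frob}$. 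The small details still to be written out are routine: that $\cO_{\Ll}\cap\pi_{\Ll}\cO_{L^{+}}=\pi_{\Ll}\cO_{\Ll}$ so the congruence genuinely descends from $L^{+}$ to $L$; that $e_{L^{+}/K}=e$ because $L^{+}$ sits inside the compositum of the maximal unramified extension with $K(\varpi)$; and the restriction compatibility $\theta_{L^{+}/K}(b)|_{M}=\theta_{M/K}(b)$ for intermediate $M$ (a standard companion to Proposition \ref{diagram}).
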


\theoremstyle{remark}\newtheorem{compute}[generalthm]{Remark}
\begin{compute}
\label{compute}
Since the extension $L/K$ is tame, these congruences for all $0\neq\beta\in\cO_{\Ll}$ are enough to uniquely determine an element of $\mathrm{Gal}(L/K)$ [for details, see Remark \ref{suffice}]. Thus if the extension $L/K$ is cyclic it suffices to apply the formula above to compute the image under $\theta_{\LK}$ of a generator of $K^*/{N_{\LK}(L^*)}$.
\end{compute}
The formula uses only the arithmetic of the extension $L/K$ and does not require the presence of any `extra' roots of unity in $K$. As we will see in Proposition \ref{ethroot}, tameness guarantees the presence in the ground field of roots of unity of degree equal to the ramification index - this will suffice for our purposes.

The existence of the local reciprocity map was originally deduced from the global theory in characteristic zero. This was the approach taken by Hasse-F.K.Schmidt in 1930 \cite{Hasse1}, \cite{Schmidt}. In his 1931 paper {\cite{Hasse2}, Hasse described the Brauer group of a local field and stated the Local Structure Theorem which says that every class in the Brauer group of a local field contains a cyclic (or Dickson-type) algebra. E. Noether hinted that these results could be used to derive local class field theory directly \cite{Roquette}.

In the 1950s, the proofs of local class field theory were rewritten in terms of Galois cohomology by Artin, Hochschild, Nakayama, Tate and Weil. Although elegant, the non-constructive formalism of this approach does not lend itself well to explicit computations. It is for this reason that during this paper we work with a definition of the local reciprocity map in terms of central simple algebras and the Brauer group.

There has been much previous work on explicit reciprocity laws. However, the approach taken by Artin-Hasse \cite{Artin-Hasse}, Iwasawa \cite{Iwasawa}, Coates-Wiles \cite{Coates-Wiles}, Wiles \cite{Wiles}, De Shalit \cite{De Shalit} and Fesenko-Vostokov \cite{Fesenko-Vostokov} is that of establishing an explicit formula for (analogues of) the Hilbert norm residue symbol. But the Hilbert symbol can only be used to calculate local reciprocity for an extension $L/K$ when $K$ contains a primitive root of unity of degree $[L:K]$.

Our general strategy for a cyclic extension $L/K$ is to choose generators of both $\mathrm{Gal}(L/K)$ and the quotient group $K^*/N_{\LK}(L^*)$ and construct the corresponding cyclic $K$-algebra. Then we explicitly construct an unramified extension $F/K$ which splits the cyclic algebra and use the Hasse invariant to compare our original choice of generator for $\mathrm{Gal}(L/K)$ with the canonical generator $\mathrm{Frob}_{\FK}\in\mathrm{Gal}(F/K)$.

Section \ref{mixedcase} covers the main content - the case where the extension $L/K$ is cyclic of prime-power degree and is neither totally ramified nor unramified. In this case we do not necessarily have a primitive root of unity of degree $[L:K]$ contained in $K$.

In Section \ref{totramcase}, we deal in a similar way with the case where the extension $L/K$ is cyclic of prime-power degree and totally ramified. In this case, $K$ contains a primitive root of unity of degree $[L:K]$ and an explicit computation of local reciprocity via the Hilbert norm residue symbol can be found, for instance in \cite{Serre}, Chapter XIV, \S3. Nevertheless, we include this case along with the unramified case in \mbox{Section \ref{unram case}} because the conclusion is that the same formula applies in all cases.

In Section \ref{abelian}, we decompose an abelian Galois group into a direct product of cyclic groups of prime-power order and combine the results of the previous sections to show that the congruences of Theorem \ref{generalthm} hold for an arbitrary tame abelian extension.

\theoremstyle{remark} \newtheorem{limitations}[generalthm]{Remark}
\begin{limitations}
\label{limitations}
In Proposition 5 of Chapter XV of \cite{Serre}, Serre describes an explicit formula which allows one to compute local reciprocity for a wildly ramified extension of prime degree.
As it stands, our method does not extend to the case where the extension $L/K$ is cyclic of prime-power degree and wildly ramified. In this case, Proposition \ref{ethroot} does not apply and we cannot assume the presence  in $K$ of roots of unity of degree equal to the ramification index. Furthermore, if the extension is wild, knowing $g(x)x^{-1}\ \mathrm{mod}\ \pi_{\Ll}$ for all $x\in L$ is not enough to specify an element $g\in\mathrm{Gal}(L/K)$ [c.f. Remark \ref{suffice}].
\end{limitations}

\paragraph{\textbf{Notation}}
For any local field $K$, we use the following notation:\\
\begin{tabular}{ll}
$\cO_{\Kk}$ &  $\textrm{the ring of integers of}\ K$\\
$\cO_{\Kk}^*$  & $\textrm{the units of}\ K$\\
$k$ & $\textrm{the residue field of}\ K$\\
$\mathfrak{v}_{\Kk}$  & $\textrm{the discrete valuation on}\ K\textrm{,}\ \textrm{normalised so}\ \mathfrak{v}_{\Kk}:K^*\twoheadrightarrow \bbZ$\\\\
\noalign{For a finite extension of local fields $L/K$, we use the following notation:}\\
$e=e_{\LK}$  & $\textrm{the ramification index of}\ L/K$\\
$f=f_{\LK}$ & $\textrm{the residue degree of}\ L/K$\\
$\mathfrak{v}_{\Kk}$ & $\textrm{the unique extension to a valuation on}\ L\ \textrm{with values in}\ \frac{1}{e}\bbZ$\\
\end{tabular}\\

We use the term `cyclic extension' to mean a (finite) Galois extension with cyclic Galois group.

For a finite unramified extension of local fields $L/K$, we denote by $\mathrm{Frob}_{\LK}$ the unique element of $\mathrm{Gal}(L/K)$ which acts on the residue field $\ell$ of $L$ by $x\mapsto x^{|k|}$.

\paragraph{\textbf{Acknowledgements}} I would like to thank my Ph.D. advisor Tim Dokchitser for his many helpful suggestions, encouragement and excellent guidance throughout this research. I am also very grateful to Vladimir Dokchitser for his generosity in giving his time for several enlightening discussions. I was supported by an EPSRC scholarship for the duration of this work.

\section{Background}
\subsection{A definition of the local reciprocity map}
\label{sectiondefnlocrec}
The purpose of this section is to define the local reciprocity map for finite abelian extensions via central simple algebras and the Brauer group.

Let $K$ be a local field. Fix a separable closure of $K$ and denote it by $K_s$. Write $G_{\Kk}=\mathrm{Gal}(K_s/K)$ for the absolute Galois group of $K$.\\
\paragraph{\textbf{Central simple algebras}}
\theoremstyle{definition}\newtheorem{csa}{Definition}[subsection]
\begin{csa}
\label{csa}
A finite dimensional algebra $A$ over $K$ is called:
\begin{itemize}
\item[(a)] \textbf{central} if its centre is $K$,
\item[(b)] \textbf{simple} if its only 2-sided ideals are $0$ and $A$.
\end{itemize}
\end{csa}
The Brauer group of $K$, denoted $\mathrm{Br}(K)$, classifies the central simple algebras over $K$.

\theoremstyle{plain}\newtheorem{A-W}[csa]{Theorem}
\begin{A-W}
\label{A-W}
Let $A$ be a central simple algebra over $K$. Then
$$A\cong M_n(D)$$
for some $n\in\bbN$ and some central division algebra $D$ over $K$. Moreover, $n$ and $D$ are determined by $A$.

\end{A-W}

\begin{proof}
This is a special case of the Artin-Wedderburn theorem which classifies semi-simple rings in terms of products of matrix rings over division rings.
\end{proof}

\theoremstyle{definition}\newtheorem{similar}[csa]{Definition}
\begin{similar}
\label{similar}
Let $A$ and $B$ be central simple algebras over $K$. Use the Artin-Wedderburn theorem to write $A\cong M_{n_1}(D_1)$, $B\cong M_{n_2}(D_2)$.

We call $A$ and $B$ \textbf{similar} if $D_1\cong D_2$.
\end{similar}

\theoremstyle{definition}\newtheorem{brauer}[csa]{Definition}
\begin{brauer}
\label{brauer}
The Brauer group of $K$, denoted $\mathrm{Br}(K)$, is the set of similarity classes of central simple algebras over $K$, with group structure given by the tensor product over $K$.
\end{brauer}
For the proof that this group structure is well-defined see, for instance, \cite{Milne}, Chapter IV, Section 2.

Now let $L/K$ be a finite extension. If $A$ is a central simple algebra over $K$ then $A\otimes_{\Kk} L$ is a central simple algebra over $L$.
\theoremstyle{definition}\newtheorem{splitting}[csa]{Definition}
\begin{splitting}
\label{splitting}
Let $A$ be a central simple algebra over $K$.
\begin{itemize}
\item[(i)] We say that a finite extension $L/K$ \textbf{splits} $A$ if $A\otimes_{\Kk}L\cong M_n(L)$ for some $n\in\bbN$.
\item[(ii)] We denote by $\mathrm{Br}(L/K)$ the subgroup of the Brauer group of $K$ consisting of all the similarity classes of central simple algebras over $K$ which are split by $L/K$. In other words, $\mathrm{Br}(L/K)$ is the kernel of the map $\mathrm{Br}(K) \rightarrow \mathrm{Br}(L)$ given by $A\mapsto A\otimes_{\Kk} L$.
\end{itemize}
\end{splitting}
It has been shown that any central simple algebra over a local field $K$ has a splitting field which is not only a finite extension of $K$ but is also \emph{unramified} over $K$. For Serre's proof see \cite{C-F}, Chapter VI, Appendix to Section 1. We restate this result in the following proposition:
\theoremstyle{plain}\newtheorem{unramsplit}[csa]{Proposition}
\begin{unramsplit}
\label{unramsplit}
$\mathrm{Br}(K)=\bigcup_{F} \mathrm{Br}(F/K)$ where the union is over all finite unramified extensions $F/K$ contained in $K_s$.
\end{unramsplit}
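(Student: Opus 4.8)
The inclusion $\mathrm{Br}(K)\supseteq\bigcup_F\mathrm{Br}(F/K)$ is immediate, so the plan is to prove the reverse inclusion. First I would reduce to central division algebras: whether a finite extension splits a central simple algebra depends only on its similarity class, and by Artin--Wedderburn (Theorem \ref{A-W}) every class in $\mathrm{Br}(K)$ is represented by a central division algebra $D$, say with $\dim_{\Kk}D=n^2$. A subfield $L\subseteq D$ with $[L:K]=n$ is a maximal subfield of $D$ and hence splits $D$, and any such $L$ is a finite extension of $K$; so everything reduces to the claim that \emph{every central division algebra $D/K$ of degree $n$ contains a subfield $L$ with $L/K$ unramified of degree $n$.}

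To prove that claim I would first set up the valuation theory of $D$. Since $K$ is complete under $\mathfrak{v}_{\Kk}$ and $D$ is finite-dimensional over $K$, the valuation $\mathfrak{v}_{\Kk}$ extends uniquely to a valuation $w$ on $D$ (explicitly, $w(x)=\tfrac1n\mathfrak{v}_{\Kk}(\mathrm{Nrd}_{D/K}(x))$). Let $\cO_D=\{x\in D:w(x)\ge0\}$ be its valuation ring, a non-commutative local ring with maximal ideal $\frm_D=\{x:w(x)>0\}$ and residue ring $\overline D=\cO_D/\frm_D$. I would invoke the standard structural facts that $\cO_D$ is a free $\cO_{\Kk}$-module of rank $n^2$, that $\frm_D=\pi_D\cO_D=\cO_D\pi_D$ for any $\pi_D\in D$ of least positive valuation, and that $\overline D$ is a finite-dimensional division algebra over the finite field $k$, hence by Wedderburn's little theorem a finite field. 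Write $f=[\overline D:k]$ and let $e$ be the index of $\mathfrak{v}_{\Kk}(K^*)=\bbZ$ in $w(D^*)=\tfrac1e\bbZ$. Filtering $\cO_D/\pi_{\Kk}\cO_D=\cO_D/\frm_D^{\,e}$ by powers of $\frm_D$, with each graded piece $\frm_D^{\,i}/\frm_D^{\,i+1}$ isomorphic to $\overline D$ as a $k$-vector space, gives on the one hand $k$-dimension $ef$ and on the other (by freeness of $\cO_D$) $k$-dimension $n^2$, so $ef=n^2$; moreover $K(\pi_D)\subseteq D$ has ramification index $e$ over $K$, whence $e\mid[K(\pi_D):K]\mid n$, so $e\mid n$.

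Next I would embed an unramified field. Since $\overline D/k$ is separable, choose a generator $\bar u$ of $\overline D$ over $k$ with (irreducible) minimal polynomial $\bar g\in k[X]$ of degree $f$, take a monic lift $g\in\cO_{\Kk}[X]$, pick any $u_0\in\cO_D$ reducing to $\bar u$, and apply Hensel's lemma inside the complete discretely valued field $K(u_0)\subseteq D$ --- which is the only point at which the non-commutativity of $D$ intervenes, and it is handled simply by restricting to this subfield --- to obtain a root $u$ of $g$ with $u\equiv\bar u\pmod{\frm_D}$. Then $L=K(u)\subseteq D$ satisfies $[L:K]\le f$, while its residue field contains $k(\bar u)=\overline D$; hence $[L:K]=f$ and $L/K$ is unramified. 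Finally, $L$ being a subfield of $D$ forces $f\mid n$, so writing $n=fm$ the identity $ef=n^2$ gives $e=fm^2$, and combined with $e\mid n=fm$ this forces $m=1$, i.e.\ $f=n$. Thus $L\subseteq D$ is a maximal subfield unramified of degree $n$ over $K$; it splits $D$, so $[D]\in\mathrm{Br}(L/K)$ with $L/K$ finite unramified. Letting $D$ run over representatives of all Brauer classes then yields $\mathrm{Br}(K)=\bigcup_F\mathrm{Br}(F/K)$.

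The hard part is the valuation-theoretic input just invoked: that $\mathfrak{v}_{\Kk}$ extends (uniquely) to $D$, and that its valuation ring $\cO_D$ is a free $\cO_{\Kk}$-module of rank $n^2$ with principal maximal ideal $\frm_D$. This is where essentially all the substance of the proposition lives --- it is exactly the material developed in the appendix to Chapter VI, \S1 of \cite{C-F} cited above --- whereas the lifting of the residue field and the divisibility bookkeeping that complete the argument are routine.
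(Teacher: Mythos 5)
Your proof is correct, and it is precisely the argument of Serre in \cite{C-F}, Chapter VI, Appendix to \S 1, which is what the paper cites in lieu of giving a proof. The reduction to division algebras, the valuation-theoretic setup giving $ef=n^2$ for the ramification and residue data of $D$, the Hensel lift of a primitive element of $\overline D$ inside the commutative subfield $K(u_0)$ to produce an unramified subfield, and the divisibility bookkeeping forcing $f=n$ all match the cited treatment.
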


\paragraph{\textbf{Cyclic Algebras}}
\theoremstyle{definition}\newtheorem{cyclicalg}[csa]{Definition}
\begin{cyclicalg}
\label{cyclicalg}
Let $L/K$ be a cyclic extension of degree $n$. Fix an element $b\in K^*$ and a generator $\sigma$ of the Galois group $\mathrm{Gal}(L/K)$.
Define the corresponding \textbf{cyclic algebra} (which we denote temporarily as $A$) as follows:
$$A=\left\{\sum_{i=0}^{n-1}a_i v^i\ \Big|\ a_i\in L\right\}$$
with multiplication given by
\begin{itemize}
\item []$v^{n}=b$
\item []$v^i a=\sigma^i(a)v^i\ \ \forall a\in L$.
\end{itemize}
\end{cyclicalg}
One can check that $A$ is a central simple algebra of dimension $n^2$ over $K$. Furthermore, since $L$ is contained in $A$ as a maximal commutative subalgebra, $A$ is split by $L$.

Let
\begin{eqnarray*}
\chi:G_{\Kk}\twoheadrightarrow\mathrm{Gal}(L/K) & \cong & \frac{\frac{1}{n}\bbZ}{\bbZ}\subset \frac{\bbQ}{\bbZ}\\
                                                                \sigma & \mapsto & \frac{1}{n}\ \mathrm{mod}\ \bbZ
\end{eqnarray*}
be the continuous character of $G_{\Kk}=\mathrm{Gal}(K_s/K)$ which factors through $\mathrm{Gal}(L/K)$ and sends $\sigma$ to $\frac{1}{n}\ \mathrm{mod}\ \bbZ$.

Note that specifying $\chi$ is equivalent to specifying both $\sigma$ and the extension $L/K$. So it makes sense to use the notation $$A=(\chi, b)$$
for the cyclic algebra defined above.
In an abuse of notation, we will in fact use $(\chi, b)$ to denote both $A$ and its class $[A]$ in the Brauer group $\mathrm{Br}(K)$.

\theoremstyle{plain}\newtheorem{surj}[csa]{Proposition}
\begin{surj}
\label{surj}
Let $L$, $\sigma$ and $\chi$ be as above. Then the map
\begin{eqnarray*}
K^* & \rightarrow & \mathrm{Br}(L/K)\\
b & \mapsto & (\chi,b)
\end{eqnarray*}
is surjective.

Moreover, $(\chi, b_1)=(\chi, b_2)$ if and only if $b_1b_2^{-1}\in N_{\LK}(L^*)$.
\end{surj}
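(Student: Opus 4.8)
The plan is to argue directly with the cyclic algebras of Definition~\ref{cyclicalg}, using only the standard structure theory of central simple algebras: the Skolem--Noether theorem, the double centralizer theorem, and the fact that a degree-$n$ field extension of $K$ splits a degree-$n$ central simple algebra over $K$ exactly when it embeds into that algebra as a $K$-subalgebra. First I would record two elementary isomorphisms. Writing $A=(\chi,b)=\{\sum_{i=0}^{n-1}a_iv^i\}$ and letting $c\in L^*$, the element $v'=cv$ satisfies $v'a=\sigma(a)v'$ for all $a\in L$ and $(v')^n=c\,\sigma(c)\cdots\sigma^{n-1}(c)\,v^n=N_{\LK}(c)\,b$; since $L$ and $v'$ generate $A$, the assignment sending the standard generator of $(\chi,N_{\LK}(c)b)$ to $v'$ and fixing $L$ extends to a surjective $K$-algebra homomorphism, hence an isomorphism by Theorem~\ref{A-W} and a dimension count. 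Thus $(\chi,b)\cong(\chi,N_{\LK}(c)b)$, so $(\chi,b_1)=(\chi,b_2)$ whenever $b_1b_2^{-1}\in N_{\LK}(L^*)$, and in particular $b\mapsto(\chi,b)$ factors through $K^*/N_{\LK}(L^*)$. Taking $b=1$ and noting that $(\chi,1)$ acts faithfully on $L$ ($L$ acting by multiplication, $v$ by $\sigma$), whence $(\chi,1)\cong M_n(K)$, this also shows $(\chi,b)\sim 1$ for every $b\in N_{\LK}(L^*)$.

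For surjectivity, take a class in $\mathrm{Br}(L/K)$ and represent it by a central division algebra $D$ over $K$ split by $L/K$. Its index $m=\deg D$ divides $[L:K]=n$, so $A=M_{n/m}(D)$ represents the same class, has degree $n$, and is still split by $L/K$; hence I may assume $A$ itself is a degree-$n$ algebra split by $L/K$, so $L$ embeds into $A$ as a (maximal) subfield, which I identify with its image. Applying Skolem--Noether to the two $K$-embeddings $\mathrm{id}_L$ and $\sigma$ of the simple algebra $L$ into $A$ produces $v\in A^*$ with $vav^{-1}=\sigma(a)$ for all $a\in L$. Since $\sigma^n=\mathrm{id}$, the element $v^n$ centralizes $L$, so $v^n\in L$ by the double centralizer theorem (the centralizer of $L$ in $A$ is $L$ itself); and since conjugation by $v$ restricts to $\sigma$ on $L$ while $v$ commutes with $v^n$, we get $\sigma(v^n)=v^n$, so $b:=v^n$ lies in $K^*$. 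The relations $va=\sigma(a)v$ and $v^n=b$ then make $a\mapsto a$ ($a\in L$), $v\mapsto v$ extend to a nonzero, hence injective, $K$-algebra homomorphism $(\chi,b)\to A$, an isomorphism by dimension count. So the given class equals $(\chi,b)$.

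It remains to prove that $(\chi,b_1)=(\chi,b_2)$ in $\mathrm{Br}(K)$ forces $b_1b_2^{-1}\in N_{\LK}(L^*)$. As both algebras have dimension $n^2$ and are similar, Theorem~\ref{A-W} gives a $K$-isomorphism $\phi:(\chi,b_1)\xrightarrow{\sim}(\chi,b_2)$; write $v_1,v_2$ for the standard generators. The restriction $\phi|_L$ and the inclusion $L\hookrightarrow(\chi,b_2)$ are two embeddings of the simple algebra $L$, so by Skolem--Noether they differ by an inner automorphism of $(\chi,b_2)$; composing $\phi$ with that automorphism I may assume $\phi$ fixes $L$ pointwise. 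Then for all $a\in L$, $\phi(v_1)\,a\,\phi(v_1)^{-1}=\phi(\sigma(a))=\sigma(a)=v_2\,a\,v_2^{-1}$, so $v_2^{-1}\phi(v_1)$ centralizes $L$ and hence lies in $L^*$ by the double centralizer theorem; writing $\phi(v_1)=c\,v_2$ with $c\in L^*$ and applying $\phi$ to $v_1^n=b_1$ gives $b_1=(cv_2)^n=c\,\sigma(c)\cdots\sigma^{n-1}(c)\,v_2^n=N_{\LK}(c)\,b_2$. Combined with the first paragraph, this yields both assertions.

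The real content is concentrated in the surjectivity step --- the reduction to a degree-$n$ representative and the extraction of the cyclic presentation via Skolem--Noether and the centralizer theorem --- together with the bookkeeping in the converse; everything else is a faithful-map-plus-dimension argument. The one ingredient I am relying on that the excerpt does not develop is the elementary structure theory of central simple algebras (Skolem--Noether, the double centralizer theorem, and the splitting-field embedding criterion); granting these, the proof is routine.
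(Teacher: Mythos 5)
Your proof is correct and is the standard argument for this statement. The paper itself does not prove Proposition~\ref{surj} -- it simply cites Proposition 5.5 of Fisher's Part III notes -- so there is no in-paper argument to compare against; your self-contained proof via Skolem--Noether and the double centralizer theorem is exactly the textbook route that such a reference would follow. Two small points worth noting, both of which you already implicitly handle: in the converse direction you need $v_2^{-1}\phi(v_1)$ to lie in $L^*$ rather than merely in $L$, which is fine because $\phi(v_1)$ is a unit; and in the surjectivity step the reduction from $D$ to $A=M_{n/m}(D)$ quietly uses that the index of a division algebra split by $L$ divides $[L:K]$, which is part of the ``splitting-field embedding criterion'' package you explicitly flag as an outside input. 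With those background facts granted, the argument is complete.
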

\begin{proof}
See \cite{Fisher} where this is Proposition 5.5.
\end{proof}

The cyclic algebra construction gives a map
$$(\ ,\ ):\mathrm{Hom}_{\scriptstyle{cts}}\left(G_{\Kk},{\bbQ}/{\bbZ}\right)\times K^*\longrightarrow \mathrm{Br}(K).$$

\newtheorem{cupprod}[csa]{Proposition}
\begin{cupprod}
\label{cupprod}
The map $(\ ,\ )$ coming from the cyclic algebra construction can be identified with the cup product
$$\mathrm{H}^2(G_{\Kk},\bbZ)\times \mathrm{H}^0(G_{\Kk}, {K_s}^*)\longrightarrow \mathrm{H}^2(G_{\Kk}, {K_s}^*)$$
induced by $\bbZ\times {K_s}^*\rightarrow {K_s}^*;\ (m,z)\mapsto z^m$.
In particular, it is \textbf{bilinear}.
\end{cupprod}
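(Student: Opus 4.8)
The plan is to compute both sides as explicit $2$-cocycles on $G_{\Kk}$ with values in ${K_s}^*$ and observe that they coincide. First I would recall the classical isomorphism $\mathrm{Br}(K)\cong \mathrm{H}^2(G_{\Kk},{K_s}^*)$ furnished by the theory of crossed-product algebras, under which a central simple algebra split by a finite Galois extension corresponds to its factor set. For the cyclic algebra $(\chi,b)$ attached to a cyclic extension $L/K$ of degree $n$, generator $\sigma$ and element $b\in K^*$, the factor set is the standard one: writing $\mathrm{Gal}(L/K)=\{\sigma^i:0\le i<n\}$ and using the basis $\{v^i\}$ of the crossed product, the $2$-cocycle $c$ satisfies $c(\sigma^i,\sigma^j)=1$ if $i+j<n$ and $c(\sigma^i,\sigma^j)=b$ if $i+j\ge n$ (the two cases being $\lfloor (i+j)/n\rfloor\in\{0,1\}$). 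Inflating along $G_{\Kk}\twoheadrightarrow\mathrm{Gal}(L/K)$ gives the class of $(\chi,b)$ in $\mathrm{H}^2(G_{\Kk},{K_s}^*)$.

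Next I would make the connecting homomorphism $\delta:\mathrm{H}^1(G_{\Kk},\bbQ/\bbZ)\xrightarrow{\sim}\mathrm{H}^2(G_{\Kk},\bbZ)$ explicit. It is an isomorphism because $\mathrm{H}^i(G_{\Kk},\bbQ)=0$ for $i\ge 1$ (the coefficient module $\bbQ$ is uniquely divisible, whereas continuous cohomology of a profinite group is torsion), applied to the short exact sequence $0\to\bbZ\to\bbQ\to\bbQ/\bbZ\to 0$. Choosing the set-theoretic lift $\tilde\chi:G_{\Kk}\to\frac1n\bbZ\subset\bbQ$ of $\chi$ with values in $\{0,\tfrac1n,\dots,\tfrac{n-1}n\}$, the class $\delta\chi$ is represented by the integer-valued $2$-cocycle $(g,h)\mapsto\tilde\chi(g)+\tilde\chi(h)-\tilde\chi(gh)$, which equals $0$ when the chosen representatives add to less than $1$ and equals $1$ otherwise.

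Then I would compute the cup product. For a class in $\mathrm{H}^2(G_{\Kk},\bbZ)$ represented by an integer-valued $2$-cocycle $a$ and a class in $\mathrm{H}^0(G_{\Kk},{K_s}^*)=K^*$ represented by $b$, the cup product induced by $(m,z)\mapsto z^m$ is represented by $(g,h)\mapsto b^{a(g,h)}$. Taking $a=\delta\chi$ and comparing with the factor set of $(\chi,b)$ above, both are the $2$-cocycle equal to $1$ on pairs whose indices sum to less than $n$ and to $b$ otherwise; hence $(\chi,b)$ and $\delta\chi\cup b$ define the same class in $\mathrm{H}^2(G_{\Kk},{K_s}^*)\cong\mathrm{Br}(K)$. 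Bilinearity then drops out: $\delta$ is a homomorphism and the cup product is biadditive, so in $\mathrm{Br}(K)$ one gets $(\chi_1+\chi_2,b)=(\chi_1,b)+(\chi_2,b)$ and $(\chi,b_1b_2)=(\chi,b_1)+(\chi,b_2)$; here one uses that every continuous character $G_{\Kk}\to\bbQ/\bbZ$ has finite cyclic image and so does correspond to a cyclic extension, so the reduction to the cyclic case loses no generality.

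I expect the main obstacle to be bookkeeping rather than anything conceptual: one must pin down the normalisations — the direction of $\mathrm{Br}(K)\cong\mathrm{H}^2(G_{\Kk},{K_s}^*)$, the precise cocycle representing the crossed product, and the sign conventions in $\delta$ — so that the two cocycles match on the nose, and check that passing from $\mathrm{Gal}(L/K)$ to $G_{\Kk}$ by inflation is compatible with all of them. Alternatively, if one prefers to avoid cocycle computations, the same identification can be quoted from the standard references (e.g. \cite{Serre}, \cite{Milne}), and only the bilinearity needs to be extracted; but the cocycle comparison is short and self-contained, so I would include it.
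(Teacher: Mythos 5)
Your proposal is correct and follows essentially the same route as the paper: the paper identifies $\mathrm{H}^2(G_{\Kk},\bbZ)$ with $\mathrm{Hom}_{\scriptstyle{cts}}(G_{\Kk},\bbQ/\bbZ)$ via the boundary map of $0\to\bbZ\to\bbQ\to\bbQ/\bbZ\to 0$ and then cites the explicit crossed-product isomorphism $\mathrm{Br}(K)\cong\mathrm{H}^2(G_{\Kk},{K_s}^*)$ (Milne) together with the cocycle comparison in Lemma 1 of the Appendix of \cite{Serre}, which is exactly the factor-set computation you carry out by hand. The only difference is that you write out the cocycle bookkeeping explicitly where the paper defers it to the references.
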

\begin{proof}
Since $\bbQ$ is cohomologically trivial, we can identify $\mathrm{H}^2(G_{\Kk},\bbZ)$ with $\mathrm{Hom}_{\scriptstyle{cts}}\left(G_{\Kk},{\bbQ}/{\bbZ}\right)=\mathrm{H}^1(G_{\Kk},{\bbQ}/{\bbZ})$ via the boundary map coming from the exact sequence
\begin{displaymath}
\xymatrix{
0\ar[r] & \bbZ\ar[r] & \bbQ\ar[r] & {\bbQ}/{\bbZ}\ar[r] & 0
}
\end{displaymath}
Using the explicit isomorphism between $\mathrm{Br}(K)$ and $\mathrm{H}^2(G_{\Kk}, {K_s}^*)$ given in \cite{Milne} Chapter IV \S3 and the cup product computation in Lemma 1 of the Appendix on p176 of \cite{Serre}, one can check that the relevant diagram commutes.
\end{proof}

\newtheorem{kercupprod}[csa]{Corollary}
\begin{kercupprod}
For any character $\chi\in \mathrm{Hom}_{\scriptstyle{cts}}\left(G_{\Kk},{\bbQ}/{\bbZ}\right)$ we have
$$(\chi,b)=0 \iff b\in N_{\LK}(L^*)$$
where $L$ is the fixed field of the kernel of $\chi$.
\end{kercupprod}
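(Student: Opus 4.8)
The plan is to reduce everything to Proposition \ref{surj}, which already supplies the criterion $(\chi,b_1)=(\chi,b_2)\iff b_1b_2^{-1}\in N_{\LK}(L^*)$. First I would observe that an arbitrary continuous character $\chi\colon G_{\Kk}\to\bbQ/\bbZ$ is automatically of the shape considered there: its image is a finite cyclic subgroup of $\bbQ/\bbZ$, hence equals $\tfrac{1}{n}\bbZ/\bbZ$ for a unique $n\in\bbN$; the kernel of $\chi$ is an open normal subgroup of $G_{\Kk}$, so its fixed field $L\sub K_s$ is a cyclic extension of $K$ of degree $n$; and there is a unique generator $\sigma$ of $\mathrm{Gal}(L/K)$ with $\chi(\sigma)=\tfrac{1}{n}\bmod\bbZ$. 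Thus $(\chi,b)$ is precisely the cyclic algebra attached to $L/K$, $\sigma$ and $b$ in Definition \ref{cyclicalg}, and Proposition \ref{surj} applies verbatim.

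Next I would show that $(\chi,1)=0$ in $\mathrm{Br}(K)$. By Proposition \ref{cupprod} the pairing $(\ ,\ )$ is bilinear, so $b\mapsto(\chi,b)$ is a group homomorphism $K^*\to\mathrm{Br}(K)$ (with image in $\mathrm{Br}(L/K)$, since cyclic algebras are split by $L$); a group homomorphism carries the identity to the identity, so $(\chi,1)=0$. Alternatively one can argue directly: letting $L$ act on itself by left multiplication and letting $v$ act as $\sigma$ identifies the cyclic algebra $(\chi,1)$ with a subalgebra of $\mathrm{End}_{\Kk}(L)\cong M_n(K)$, and comparison of dimensions together with simplicity forces equality, so $(\chi,1)$ is the trivial Brauer class.

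Finally I would combine the two: applying the second assertion of Proposition \ref{surj} with $b_1=b$ and $b_2=1$ gives $(\chi,b)=(\chi,1)\iff b\in N_{\LK}(L^*)$, and since $(\chi,1)=0$ this reads $(\chi,b)=0\iff b\in N_{\LK}(L^*)$, as claimed. I do not expect a serious obstacle here; the only point that needs a word of care is the reduction in the first step, namely checking that an abstract continuous character genuinely cuts out a cyclic extension of the correct degree together with a distinguished generator, so that the cyclic-algebra machinery of Proposition \ref{surj} is legitimately available. The degenerate case $n=1$ (so $\chi$ trivial, $L=K$, $N_{\LK}(L^*)=K^*$) is consistent, both sides holding for every $b\in K^*$.
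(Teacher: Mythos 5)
Your proof is correct and follows essentially the same route as the paper, which simply cites Proposition \ref{surj} together with bilinearity; you have filled in the (short) details — namely that bilinearity gives $(\chi,1)=0$, so the norm criterion of Proposition \ref{surj} with $b_2=1$ yields the claim.
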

\begin{proof}
This is an immediate consequence of Proposition \ref{surj} and bilinearity.
\end{proof}

\paragraph{\textbf{The Hasse invariant}}
\newtheorem{inv1}[csa]{Proposition}
\begin{inv1}
\label{inv1}
Let $F/K$ be an unramified extension of degree $n$. Then the map $\mathrm{inv}_{\FK}$ defined as
\begin{eqnarray*}
\mathrm{inv}_{\FK} :  \mathrm{Br}(F/K) & \longrightarrow & \frac{\frac{1}{n}\bbZ}{\bbZ}\ \ \subset\ \frac{\bbQ}{\bbZ}\\
   (\chi, b) & \mapsto & \mathfrak{v}_{\Kk}(b)\chi(\mathrm{Frob}_{\FK})
\end{eqnarray*}
is a well-defined isomorphism.
\end{inv1}
\begin{proof}
See \cite{Milne}, Chapter IV, \S4.
\end{proof}
Now suppose we have a tower of unramified extensions $M/L/K$. Clearly there is a natural inclusion $\mathrm{Br}(L/K)\hookrightarrow\mathrm{Br}(M/K)$. Moreover, one can check that the diagram below commutes.
\begin{displaymath}
\xymatrix{
\mathrm{Br}(M/K)\ar[rr]^{\mathrm{inv}_{\MK}} && {\bbQ}/{\bbZ}\\
\mathrm{Br}(L/K)\ar[rr]^{\mathrm{inv}_{\LK}}\ar@{^{(}->}[u] && {\bbQ}/{\bbZ}\ar@{=}[u]
}
\end{displaymath}
Recall that $\mathrm{Br}(K)=\bigcup_{F} \mathrm{Br}(F/K)$ where the union is over all finite unramified extensions $F/K$ contained in $K_s$. Thus we can combine the maps for each unramified extension to get a (canonical) isomorphism
\begin{displaymath}
\xymatrix{
\mathrm{inv}_{\Kk}: \mathrm{Br}(K)\ar[r]^{\ \ \ \sim} & {\bbQ}/{\bbZ}.}
\end{displaymath}
\theoremstyle{definition}\newtheorem{inv2}[csa]{Definition}
\begin{inv2}
\label{inv2}
We call the map $\mathrm{inv}_{\Kk}$ the \textbf{Hasse invariant}.
\end{inv2}

\paragraph{\textbf{The local reciprocity map}}
\theoremstyle{plain}\newtheorem{locrec}[csa]{Proposition-Definition}
\begin{locrec}
\label{locrec}
Let $L/K$ be a finite abelian extension. There is a canonical isomorphism
$$\theta_{\LK}:\frac{K^*}{N_{\LK}(L^*)}\tilde{\longrightarrow} \mathrm{Gal}(L/K)$$
uniquely determined by
$$\mathrm{inv}_{\Kk}(\chi, b)=\chi\left(\theta_{\LK}\left(b\right)\right) \ \ \ \ \ \forall\ \chi\in \mathrm{Hom}\left(\mathrm{Gal}(L/K), {\bbQ}/{\bbZ}\right),\ \forall\ b\in K^*.$$
We call $\theta_{\LK}$ the \textbf{local reciprocity map}.
\end{locrec}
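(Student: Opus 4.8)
The plan is to construct $\theta_{\LK}$ by dualising the Hasse invariant against the cyclic-algebra pairing, to read off its algebraic properties from the bilinearity of that pairing, and to deduce bijectivity from the cyclic case. Uniqueness is immediate: $\mathrm{Gal}(L/K)$ is a finite abelian group, so the characters in $\mathrm{Hom}(\mathrm{Gal}(L/K),\bbQ/\bbZ)$ separate its points, and an element of $\mathrm{Gal}(L/K)$ is pinned down by the values of all such characters on it; hence at most one map can satisfy the displayed identity.

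For existence, fix $b\in K^*$. For every $\chi$ factoring through $\mathrm{Gal}(L/K)$ the algebra $(\chi,b)$ is split by the fixed field $L_\chi$ of $\ker\chi$, hence by $L$, so $(\chi,b)\in\mathrm{Br}(L/K)$; by Proposition \ref{cupprod} the map $\chi\mapsto(\chi,b)$ is additive, and composing with the homomorphism $\mathrm{inv}_{\Kk}$ yields a homomorphism $\mathrm{Hom}(\mathrm{Gal}(L/K),\bbQ/\bbZ)\to\bbQ/\bbZ$, that is, an element of the double dual of $\mathrm{Gal}(L/K)$. Since $\mathrm{Gal}(L/K)$ is finite abelian, evaluation identifies it with its double dual, so there is a unique $\theta_{\LK}(b)\in\mathrm{Gal}(L/K)$ with $\mathrm{inv}_{\Kk}(\chi,b)=\chi(\theta_{\LK}(b))$ for all $\chi$. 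Bilinearity of the pairing in the second variable makes $b\mapsto\theta_{\LK}(b)$ a homomorphism $K^*\to\mathrm{Gal}(L/K)$, and if $b\in N_{\LK}(L^*)$ then $b\in N_{L_\chi/K}(L_\chi^*)$ for every $\chi$ (because $N_{\LK}=N_{L_\chi/K}\circ N_{L/L_\chi}$), so the Corollary to Proposition \ref{cupprod} gives $(\chi,b)=0$ and hence $\chi(\theta_{\LK}(b))=0$ for all $\chi$, forcing $\theta_{\LK}(b)=1$. Thus $\theta_{\LK}$ descends to $K^*/N_{\LK}(L^*)$.

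It remains to prove that this descended map is a bijection. When $\mathrm{Gal}(L/K)$ is cyclic of order $n$ I would fix a faithful character $\chi_0$: Proposition \ref{surj} identifies $K^*/N_{\LK}(L^*)$ with $\mathrm{Br}(L/K)$ via $b\mapsto(\chi_0,b)$, while $\mathrm{inv}_{\Kk}$ embeds $\mathrm{Br}(L/K)$ into $\frac{1}{n}\bbZ/\bbZ$, so $[K^*:N_{\LK}(L^*)]$ divides $n$; once one knows this index equals $n$, the map $\theta_{\LK}$ is the composite of these isomorphisms with $\frac{1}{n}\bbZ/\bbZ\xrightarrow{\sim}\mathrm{Gal}(L/K)$ and is therefore an isomorphism. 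For a general abelian $\mathrm{Gal}(L/K)$ I would first check surjectivity: if the image were proper, some nonzero character $\chi$ would vanish on it, whence $(\chi,b)=0$ for all $b\in K^*$ by injectivity of $\mathrm{inv}_{\Kk}$, so $N_{L_\chi/K}$ would be surjective, contradicting the cyclic case applied to $L_\chi$. A short induction up a tower $L/L^H/K$ with $\mathrm{Gal}(L/K)/H$ of prime order then gives $[K^*:N_{\LK}(L^*)]\le[L:K]$; combined with surjectivity this forces $[K^*:N_{\LK}(L^*)]=[L:K]$, so $\theta_{\LK}$ is a bijection between finite groups of equal order.

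The one step I expect to be the real obstacle is the cyclic norm-index equality $[K^*:N_{\LK}(L^*)]=[L:K]$, equivalently $|\mathrm{Br}(L/K)|=n$: the embedding $\mathrm{Br}(L/K)\hookrightarrow\frac{1}{n}\bbZ/\bbZ$ is formal, but the reverse containment is the genuine class-field-theoretic input, which I would obtain from the Herbrand-quotient computation $h(\mathrm{Gal}(L/K),L^*)=[L:K]$ together with Hilbert's Theorem 90. Everything else --- the biduality argument, the homomorphism property, the containment of $N_{\LK}(L^*)$ in the kernel, and the reduction of the abelian case to the cyclic one --- is bookkeeping with the bilinear pairing of Proposition \ref{cupprod} and the Galois correspondence.
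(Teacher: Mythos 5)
The paper does not prove this statement at all: it simply cites Serre, \emph{Local Fields}, Ch.\ XI, Prop.\ 2, where $\theta_{\LK}$ is produced cohomologically from Tate's theorem and the fundamental class. Your proposal, by contrast, constructs $\theta_{\LK}$ directly from the ingredients already assembled in this section: for fixed $b$ the assignment $\chi\mapsto\mathrm{inv}_{\Kk}(\chi,b)$ is a character of $\mathrm{Hom}(\mathrm{Gal}(L/K),\bbQ/\bbZ)$ by bilinearity (Prop.\ \ref{cupprod}), biduality for finite abelian groups produces $\theta_{\LK}(b)$, bilinearity in $b$ and the corollary to Prop.\ \ref{cupprod} give the homomorphism property and $N_{\LK}(L^*)\subseteq\ker\theta_{\LK}$, and uniqueness is separation of points by characters. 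Your reduction of bijectivity to the cyclic norm--index equality is also correct: Prop.\ \ref{surj} and the bound $\mathrm{inv}_{\Kk}(\mathrm{Br}(L/K))\subseteq\frac1n\bbZ/\bbZ$ (which follows because $n\cdot(\chi_0,b)=(n\chi_0,b)=0$ and $\mathrm{inv}_{\Kk}$ is injective) give $[K^*:N_{\LK}(L^*)]\mid n$, your surjectivity-via-vanishing-character argument and the multiplicativity of norms handle the general abelian case, and the single genuine theorem you invoke --- $[K^*:N_{\LK}(L^*)]=[L:K]$ for $L/K$ cyclic, via $h(\mathrm{Gal}(L/K),L^*)=[L:K]$ and Hilbert 90 --- is exactly the right place to ground the argument and is correctly flagged as such. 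So your proof is valid, and in fact it is more in the spirit of the rest of this paper (which works throughout with cyclic algebras and $\mathrm{inv}_{\Kk}$) than the cohomological reference the paper points to; what the reference buys is uniformity with the global theory, while your route buys a construction of $\theta_{\LK}$ that makes the explicit Hasse-invariant computations of the later sections feel like a tautological unwinding of the definition rather than a separate fact to be reconciled with it.
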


\begin{proof}
See \cite{Serre}, Chapter XI, Proposition 2.
\end{proof}

\newtheorem{unramlocrec}[csa]{Corollary}
\begin{unramlocrec}
\label{unramlocrec}
Let $L/K$ be an unramified extension. Then the local reciprocity map satisfies
$\theta_{\LK}(b)=\mathrm{Frob}_{\LK}^{\mathfrak{v}_{\Kk}(b)}$
for all $b\in K^*$.
\end{unramlocrec}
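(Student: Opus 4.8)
The plan is to read the corollary straight off the defining property of $\theta_{\LK}$ in Proposition-Definition \ref{locrec}, combined with the explicit description of the invariant on an unramified layer in Proposition \ref{inv1}. Write $n=[L:K]$, so $L/K$ is unramified of degree $n$ and $\mathrm{Gal}(L/K)$ is cyclic, generated by $\mathrm{Frob}_{\LK}$.

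First I would fix an arbitrary $\chi\in\mathrm{Hom}(\mathrm{Gal}(L/K),\bbQ/\bbZ)$ and, by inflation, view it as a continuous character of $G_{\Kk}$ factoring through $\mathrm{Gal}(L/K)$; its image lies in $\tfrac{1}{n}\bbZ/\bbZ$, and the fixed field of $\ker\chi$ is an unramified subextension of $L$, so for every $b\in K^*$ the cyclic algebra $(\chi,b)$ lies in $\mathrm{Br}(L/K)$. Then, using the commuting triangle relating $\mathrm{inv}_{\Kk}$ and $\mathrm{inv}_{\LK}$ (the diagram preceding Definition \ref{inv2}) together with Proposition \ref{inv1}, I would compute
$$\chi\bigl(\theta_{\LK}(b)\bigr)=\mathrm{inv}_{\Kk}(\chi,b)=\mathrm{inv}_{\LK}(\chi,b)=\mathfrak{v}_{\Kk}(b)\,\chi(\mathrm{Frob}_{\LK})=\chi\bigl(\mathrm{Frob}_{\LK}^{\mathfrak{v}_{\Kk}(b)}\bigr).$$

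Finally I would invoke the fact that the characters of a finite abelian group separate its elements: since $\chi\bigl(\theta_{\LK}(b)\,\mathrm{Frob}_{\LK}^{-\mathfrak{v}_{\Kk}(b)}\bigr)=0$ for every $\chi\in\mathrm{Hom}(\mathrm{Gal}(L/K),\bbQ/\bbZ)$, the element $\theta_{\LK}(b)\,\mathrm{Frob}_{\LK}^{-\mathfrak{v}_{\Kk}(b)}$ must be trivial in $\mathrm{Gal}(L/K)$, which is precisely the asserted formula. I do not expect a genuine obstacle here; the only points needing a sentence of care are that $(\chi,b)$ really lands in $\mathrm{Br}(L/K)$ so that Proposition \ref{inv1} applies and its value agrees with $\mathrm{inv}_{\Kk}$, and the separation-of-points argument at the end — both entirely standard.
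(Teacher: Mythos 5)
Your proposal is correct and takes exactly the route the paper intends: the paper's proof is the single sentence ``This is clear from the definition of the Hasse invariant,'' and your computation $\chi(\theta_{\LK}(b))=\mathrm{inv}_{\Kk}(\chi,b)=\mathrm{inv}_{\LK}(\chi,b)=\mathfrak{v}_{\Kk}(b)\,\chi(\mathrm{Frob}_{\LK})$, followed by separation of points by characters, is precisely the unpacking of that sentence.
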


\begin{proof}
This is clear from the definition of the Hasse invariant.
\end{proof}

\newtheorem{diagram}[csa]{Proposition}
\begin{diagram}
\label{diagram}
Let $L/K$ be a finite abelian extension and let $E/K$ be any finite extension. Then the following diagram commutes.

\begin{displaymath}
\xymatrix{
E^*\ar[rr]^{\theta_{LE/E}}\ar[d]_{N_{\EK}} && \mathrm{Gal}(LE/E)\ar@{^{(}->}[d]\\
K^*\ar[rr]^{\theta_{L/K}} && \mathrm{Gal}(L/K)
}
\end{displaymath}

\end{diagram}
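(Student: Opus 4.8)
The plan is to verify the commutativity by unwinding the definition of the local reciprocity map in terms of the Hasse invariant and exploiting functoriality of restriction on Brauer groups. Since $\mathrm{Gal}(L/K)$ is abelian, an element $g \in \mathrm{Gal}(L/K)$ is uniquely determined by the values $\chi(g)$ as $\chi$ ranges over $\mathrm{Hom}(\mathrm{Gal}(L/K), \bbQ/\bbZ)$; likewise elements of $\mathrm{Gal}(LE/E)$ are separated by characters of that group. So it suffices to fix $a \in E^*$ and a character $\psi \in \mathrm{Hom}(\mathrm{Gal}(L/K), \bbQ/\bbZ)$, pull $\psi$ back along the restriction homomorphism $\mathrm{res}: \mathrm{Gal}(LE/E) \hookrightarrow \mathrm{Gal}(L/K)$ to get $\psi' = \psi \circ \mathrm{res} \in \mathrm{Hom}(\mathrm{Gal}(LE/E), \bbQ/\bbZ)$, and show that
$$\psi\bigl(\theta_{\LK}(N_{\EK}(a))\bigr) = \psi\bigl(\mathrm{res}(\theta_{LE/E}(a))\bigr) = \psi'\bigl(\theta_{LE/E}(a)\bigr).$$
By Proposition-Definition \ref{locrec}, the left-hand side equals $\mathrm{inv}_{\Kk}(\psi, N_{\EK}(a))$ and the right-hand side equals $\mathrm{inv}_{\Ee}(\psi', a)$. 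Thus the whole statement reduces to the identity
$$\mathrm{inv}_{\Kk}\bigl(\psi, N_{\EK}(a)\bigr) = \mathrm{inv}_{\Ee}\bigl(\mathrm{res}_{\Ee}(\psi), a\bigr)$$
in $\bbQ/\bbZ$, where $\mathrm{res}_{\Ee}(\psi)$ denotes the character of $G_{\Ee}$ obtained by restricting the character of $G_{\Kk}$ corresponding to $\psi$.

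Next I would establish this invariant identity. The cyclic algebra construction is, by Proposition \ref{cupprod}, the cup product $\mathrm{H}^2(G_{\Kk},\bbZ) \times \mathrm{H}^0(G_{\Kk}, K_s^*) \to \mathrm{H}^2(G_{\Kk}, K_s^*) \cong \mathrm{Br}(K)$. Under the restriction map $\mathrm{res}: \mathrm{Br}(K) \to \mathrm{Br}(E)$ on cohomology, cup products are compatible with restriction, so $\mathrm{res}_{\Ee}(\psi, b) = (\mathrm{res}_{\Ee}\psi, b)$ for $b \in K^* \subset E^*$; that is, $(\psi, b) \otimes_{\Kk} E = (\mathrm{res}_{\Ee}\psi, b)$ as classes in $\mathrm{Br}(E)$. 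Combined with the standard fact that $\mathrm{inv}_{\Ee} \circ \mathrm{res} = [E:K] \cdot \mathrm{inv}_{\Kk}$ on $\mathrm{Br}(K)$ (which follows by choosing a common unramified splitting field via Proposition \ref{unramsplit} and comparing Frobenii, since $\mathrm{Frob}$ for the unramified part of $E$ is $\mathrm{Frob}_{\Kk}^{f_{\Ee/\Kk}}$ while the valuation picks up the ramification factor $e_{\Ee/\Kk}$, giving the product $f e = [E:K]$), one gets
$$\mathrm{inv}_{\Ee}(\mathrm{res}_{\Ee}\psi, a) = \mathrm{inv}_{\Ee}\bigl((\psi, N_{\EK}(a)) \text{-type class}\bigr)?$$
— but one must be careful: $a \in E^*$ is not in $K^*$, so the matching is not literal. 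The correct route is the projection formula for the cup product: for $\psi \in \mathrm{H}^2(G_{\Kk}, \bbZ)$ and $a \in \mathrm{H}^0(G_{\Ee}, K_s^*) = E^*$,
$$\mathrm{cor}_{\Ee/\Kk}\bigl((\mathrm{res}_{\Ee}\psi) \cup a\bigr) = \psi \cup \mathrm{cor}_{\Ee/\Kk}(a) = \psi \cup N_{\EK}(a),$$
using that corestriction on $\mathrm{H}^0(G_{\Ee}, K_s^*)$ is the norm map. Applying $\mathrm{inv}_{\Kk}$ and the compatibility $\mathrm{inv}_{\Kk} \circ \mathrm{cor}_{\Ee/\Kk} = \mathrm{inv}_{\Ee}$ (corestriction is "the identity" on invariants, dual to restriction being multiplication by the degree) then yields exactly $\mathrm{inv}_{\Ee}(\mathrm{res}_{\Ee}\psi, a) = \mathrm{inv}_{\Kk}(\psi, N_{\EK}(a))$, which is the identity required above.

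I expect the main obstacle to be bookkeeping the cohomological formalism cleanly: specifically, pinning down that corestriction on $\mathrm{H}^0$ is the norm, that $\mathrm{inv}_{\Kk} \circ \mathrm{cor}_{\Ee/\Kk} = \mathrm{inv}_{\Ee}$, and that the cyclic-algebra symbol $(\chi, b)$ really does correspond to the cup product in the normalization used here — all of which are "standard" but need the correct references (e.g. \cite{Serre}, Chapters XI and XIV, or \cite{Milne}, Chapter IV). An alternative, more hands-on proof avoids corestriction entirely: take $a \in E^*$, note $(\mathrm{res}_{\Ee}\psi, a)$ lives in $\mathrm{Br}(LE/E)$, and observe that one can compute $\mathrm{inv}_{\Ee}$ of it after base-changing to a large enough unramified extension $F$ of $K$ that simultaneously splits everything; then $\mathrm{inv}$ is computed by the formula in Proposition \ref{inv1} via valuations and Frobenius, and the identity $\mathfrak{v}_{\Ee}(a) \cdot \psi(\mathrm{Frob}) = \mathfrak{v}_{\Kk}(N_{\EK}(a)) \cdot \psi(\mathrm{Frob})$ becomes a direct valuation computation using $\mathfrak{v}_{\Kk}(N_{\EK}(a)) = f_{\Ee/\Kk} \cdot \mathfrak{v}_{\Ee}(a)$ for the unramified part together with the matching shift of Frobenius — but this is essentially the same content repackaged. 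I would present the corestriction-projection-formula argument as the clean proof and remark that it is the standard compatibility of reciprocity maps with norms, referring to \cite{Serre}, Chapter XI, \S3.
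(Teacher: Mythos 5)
Your argument is correct, and it is essentially the standard cohomological proof of the norm-compatibility of the reciprocity map. The paper itself does not give a proof here; it simply cites the analogous commutative diagram in Cassels--Fr\"ohlich (Chapter VI, \S 2.4), so you have supplied what the reference would supply. The reduction is sound: since $\mathrm{Gal}(L/K)$ is abelian its elements are separated by characters, so commutativity of the square is equivalent to the identity $\mathrm{inv}_{\Kk}(\psi, N_{\EK}(a)) = \mathrm{inv}_E(\psi\circ\mathrm{res}, a)$ for all $a\in E^*$ and all $\psi\in\mathrm{Hom}(\mathrm{Gal}(L/K),\bbQ/\bbZ)$; and one does need to check, as you implicitly do, that $\psi\circ\mathrm{res}$ factors through $\mathrm{Gal}(LE/E)$, which holds because $G_L\cap G_E = G_{LE}$. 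The projection formula $\mathrm{cor}_{E/K}\bigl((\mathrm{res}_E\psi)\cup a\bigr) = \psi\cup N_{\EK}(a)$ together with $\mathrm{inv}_{\Kk}\circ\mathrm{cor}_{E/K} = \mathrm{inv}_E$ then gives the identity directly, and you have correctly identified that your first attempt (using $\mathrm{inv}_E\circ\mathrm{res} = [E:K]\cdot\mathrm{inv}_{\Kk}$) does not apply because $a$ need not lie in $K^*$, which is exactly why corestriction is the right tool. The two facts you flag as needing references --- corestriction on $\mathrm{H}^0$ is the norm, and $\mathrm{inv}$ is invariant under corestriction --- are indeed the standard inputs (Serre, \emph{Local Fields}, Ch.~XI, or Milne, Ch.~IV), and given Proposition \ref{cupprod} in the paper identifying $(\chi,b)$ with the cup product, the formalism lines up cleanly. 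Your sketched alternative via Proposition \ref{inv1}, choosing a common unramified splitting field and comparing $\mathfrak{v}$ and $\mathrm{Frob}$ explicitly, is also a legitimate route and is arguably closer in spirit to the way the paper computes invariants elsewhere, but it would need more care than the sentence you devote to it; the projection-formula argument is the cleaner write-up.
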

\begin{proof}
See the second diagram in Section 2.4 of Chapter VI of \cite{C-F}.
\end{proof}

\subsection{Some properties of tame abelian extensions of local fields}
Let $K$ be a local field and let $L/K$ be a finite Galois extension. Write $K_{nr}$ for the maximal unramified extension of $K$ inside $L$. Let \mbox{$G=\mathrm{Gal}(L/K)$}.
\theoremstyle{definition}\newtheorem{ramgp}{Definition}[subsection]
\begin{ramgp}
\label{ramgp}
Let $i$ be an integer with $i\geq -1$. Define the \textbf{ith ramification group}, $G_i$, as follows:$$G_i=\{g\in G\mid \mathfrak{v}_{\scriptscriptstyle{L}}(g(z)-z)\geq i+1\quad \forall z\in \mathcal{O}_L\}$$
We call $G_0$ the \textbf{inertia subgroup} of $G$.
\end{ramgp}
The $G_i$ form a decreasing sequence of normal subgroups of $G$ such that $G_{-1}=G$ and $G_i=\{1\}$ for $i$ sufficiently large. See \cite{Serre} Chapter IV, Proposition 1.
\theoremstyle{plain}\newtheorem{G0}[ramgp]{Proposition}
\begin{G0}
\label{G0}
$G_0=\mathrm{Gal}(L/K_{nr})$ and therefore $e=e_{\LK}=|G_0|$.
\end{G0}

\begin{proof}
See \cite{Serre}, Chapter IV, Corollary to Proposition 2.
\end{proof}

\newtheorem{tame}[ramgp]{Definition}
\begin{tame}
\label{tame}
A Galois extension $L/K$ is called \textbf{tamely ramified} (or simply \textbf{tame}) if $G_1=\{1\}$. Otherwise the extension is called \textbf{wildly ramified}.
\end{tame}
Now let $F$ be any local field. Denote by $\mathfrak{f}$ the residue field of $F$. Let $\pi_{\Ff}$ be any uniformiser of $F$. Following Serre in \cite{Serre}, we define a \emph{filtration} of $\cO_{\Ff}^*$, the group of units of $F$, by:
\begin{eqnarray*}
U_{\Ff}^0 & = & \cO_{\Ff}^*\\
U_{\Ff}^i & = & 1+\pi_{\Ff}^i\cO_{\Ff}\ \ \mathrm{for}\ i\geq 1.
\end{eqnarray*}
These subgroups form a neighbourhood base of $1$ for the topology induced on $\cO_{\Ff}^*$ by the discrete valuation on $F^*$.

\theoremstyle{plain}\newtheorem{Ui's}[ramgp]{Proposition}
\begin{Ui's}
\label{Ui's}

\begin{itemize}
\item[(a)] ${U_{\Ff}^0}/{U_{\Ff}^1}=\mathfrak{f}^*$.
\item[(b)] For $i\geq 1$, the group $U_{\Ff}^i/U_{\Ff}^{i+1}$ is (non-canonically) isomorphic to the additive group of $\mathfrak{f}$.
\end{itemize}
\end{Ui's}
\begin{proof}
See \cite{Serre}, Chapter IV, where this is Proposition 6.
\end{proof}

\newtheorem{quotients}[ramgp]{Proposition}
\begin{quotients}
\label{quotients}
Pick a uniformiser $\pi_{\Ll}$ of $L$. For $i\geq 0$, the map
\begin{eqnarray*}
G_i & \longrightarrow & U_{\Ll}^i\\
g & \mapsto & g(\pi_{\Ll})/\pi_{\Ll}
\end{eqnarray*}
defines by passage to the quotient an injective homomorphism $$G_i/G_{i+1}\hookrightarrow U_{\Ll}^i/U_{\Ll}^{i+1}.$$
This homomorphism is independent of the choice of $\pi_{\Ll}$.
\end{quotients}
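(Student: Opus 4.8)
The plan is to check four things in sequence: that $g\mapsto g(\pi_{\Ll})/\pi_{\Ll}$ actually takes values in $U_{\Ll}^i$; that reducing modulo $U_{\Ll}^{i+1}$ turns it into a group homomorphism; that its kernel is precisely $G_{i+1}$; and that the resulting map on the quotient does not depend on $\pi_{\Ll}$. Everything rests on one elementary remark: if $g\in G_i$, then by Definition \ref{ramgp} $\mathfrak{v}_{\Ll}(g(z)-z)\geq i+1$ for all $z\in\cO_{\Ll}$, so for a unit $u\in\cO_{\Ll}^*$, dividing through by $u$ gives $\mathfrak{v}_{\Ll}\!\left(g(u)/u-1\right)\geq i+1$, i.e. $g(u)/u\in U_{\Ll}^{i+1}$. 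Applying the inequality to $z=\pi_{\Ll}$, and noting that $g$ preserves $\mathfrak{v}_{\Ll}$ by uniqueness of the extension of the valuation, we see $g(\pi_{\Ll})/\pi_{\Ll}$ is a unit with $\mathfrak{v}_{\Ll}\!\left(g(\pi_{\Ll})/\pi_{\Ll}-1\right)\geq i$, so it lies in $U_{\Ll}^i$; this settles the first point.

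For the homomorphism property, set $t=h(\pi_{\Ll})/\pi_{\Ll}\in\cO_{\Ll}^*$; then $(gh)(\pi_{\Ll})=g(t\pi_{\Ll})=g(t)\,g(\pi_{\Ll})$, so $(gh)(\pi_{\Ll})/\pi_{\Ll}=\bigl(g(t)/t\bigr)\cdot t\cdot\bigl(g(\pi_{\Ll})/\pi_{\Ll}\bigr)$. By the remark $g(t)/t\in U_{\Ll}^{i+1}$, so modulo $U_{\Ll}^{i+1}$ this equals $\bigl(h(\pi_{\Ll})/\pi_{\Ll}\bigr)\bigl(g(\pi_{\Ll})/\pi_{\Ll}\bigr)$, and $U_{\Ll}^i/U_{\Ll}^{i+1}$ is abelian by Proposition \ref{Ui's}. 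The independence of $\pi_{\Ll}$ is the same computation: a second uniformiser is $w\pi_{\Ll}$ with $w\in\cO_{\Ll}^*$, and $g(w\pi_{\Ll})/(w\pi_{\Ll})=\bigl(g(w)/w\bigr)\bigl(g(\pi_{\Ll})/\pi_{\Ll}\bigr)$ with $g(w)/w\in U_{\Ll}^{i+1}$.

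The real content is the kernel computation, and this is the step I expect to be the main obstacle, since it requires a genuine input about the structure of $\cO_{\Ll}$. One inclusion is trivial: $G_{i+1}\subseteq G_i$, and every $g\in G_{i+1}$ has $\mathfrak{v}_{\Ll}(g(\pi_{\Ll})-\pi_{\Ll})\geq i+2$, i.e. $g(\pi_{\Ll})/\pi_{\Ll}\in U_{\Ll}^{i+1}$. Conversely, suppose $g\in G_i$ with $\mathfrak{v}_{\Ll}(g(\pi_{\Ll})-\pi_{\Ll})\geq i+2$; I must promote this to $\mathfrak{v}_{\Ll}(g(z)-z)\geq i+2$ for every $z\in\cO_{\Ll}$. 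Since $i\geq 0$, we have $g\in G_0=\mathrm{Gal}(L/K_{nr})$ by Proposition \ref{G0}, so $g$ fixes $\cO_{K_{nr}}$ pointwise; and since $L/K_{nr}$ is totally ramified, $\pi_{\Ll}$ satisfies an Eisenstein polynomial over $K_{nr}$, whence $\cO_{\Ll}=\cO_{K_{nr}}[\pi_{\Ll}]$. Writing $z=P(\pi_{\Ll})$ with $P$ a polynomial over $\cO_{K_{nr}}$, we get $g(z)-z=P(g(\pi_{\Ll}))-P(\pi_{\Ll})$, which is divisible by $g(\pi_{\Ll})-\pi_{\Ll}$ in $\cO_{\Ll}$, so $\mathfrak{v}_{\Ll}(g(z)-z)\geq\mathfrak{v}_{\Ll}(g(\pi_{\Ll})-\pi_{\Ll})\geq i+2$. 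Hence the kernel is exactly $G_{i+1}$, and the map descends to an injection $G_i/G_{i+1}\hookrightarrow U_{\Ll}^i/U_{\Ll}^{i+1}$. The one thing to be careful about is the reduction to $K_{nr}$ together with the monogenicity $\cO_{\Ll}=\cO_{K_{nr}}[\pi_{\Ll}]$ of a totally ramified extension — this is precisely the ingredient that makes the kernel identification work, and everything else is formal.
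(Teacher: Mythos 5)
Your proof is correct, and it follows essentially the same route as the paper's cited source (Serre, \emph{Local Fields}, Chapter IV, Proposition 7): the reduction to $G_0=\mathrm{Gal}(L/K_{nr})$ via Proposition \ref{G0} together with the monogenicity $\cO_{\Ll}=\cO_{\Kk_{nr}}[\pi_{\Ll}]$ of the totally ramified extension is precisely the mechanism Serre uses to show that membership of $g\in G_0$ in $G_i$ is detected by the action on $\pi_{\Ll}$ alone, which gives both the kernel computation and the well-definedness. The homomorphism and independence-of-uniformiser checks are handled by the same one-line remark ($g(u)/u\in U_{\Ll}^{i+1}$ for $u\in\cO_{\Ll}^*$) as in the standard treatment.
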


\begin{proof}
This is Proposition 7 of Chapter IV of \cite{Serre}.
\end{proof}

\newtheorem{G1}[ramgp]{Corollary}
\begin{G1}
\label{G1}
Let $p$ be the characteristic of the residue field $k$. Then $G_1$ is the unique Sylow $p$-subgroup of $G_0$. In particular, the extension $L/K$ is tamely ramified if and only if $e=e_{\LK}=|G_0|$ is coprime to $p$.
\end{G1}

\begin{proof}
This follows directly from Propositions \ref{Ui's} and \ref{quotients} and the fact that $G_i=\{1\}$ for $i$ sufficiently large. Consider successive quotients and recall that the residue field $\ell$ is a $p$-group and $\ell^*$ has order coprime to $p$.
\end{proof}

The following result must exist in the literature but we are currently unable to find it stated in this form.
\newtheorem{ethroot}[ramgp]{Proposition}
\begin{ethroot}
\label{ethroot}
Let $L/K$ be a tamely ramified abelian extension of local fields. Then $e=e_{\LK}$ divides $|k^*|$ and $K$ contains all $e$th roots of unity.
\end{ethroot}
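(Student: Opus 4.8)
The plan is to prove the two assertions in turn: first that $e$ divides $|k^*|$ (which equals $p^t-1$ when $|k|=p^t$), and then deduce that $K$ contains all $e$th roots of unity. For the first assertion, write $\ell$ for the residue field of $L$ and $G=\mathrm{Gal}(L/K)$, with inertia subgroup $G_0=\mathrm{Gal}(L/K_{nr})$; thus $|G_0|=e$ by Proposition \ref{G0}, and since $L/K_{nr}$ is totally ramified the residue field of $K_{nr}$ is $\ell$, so $G/G_0\cong\mathrm{Gal}(\ell/k)$ acting on $\ell$ in the natural way. Fixing a uniformiser $\pi_{\Ll}$ of $L$ and using tameness ($G_1=\{1\}$), Propositions \ref{quotients} and \ref{Ui's}(a) supply a canonical injective homomorphism
$$\psi\colon G_0\hookrightarrow U_{\Ll}^0/U_{\Ll}^1=\ell^*,\qquad g\mapsto g(\pi_{\Ll})/\pi_{\Ll},$$
which moreover does not depend on the choice of $\pi_{\Ll}$.

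Next I would check that $\psi$ is equivariant for the conjugation action of $G$ on $G_0$ and the action of $G/G_0=\mathrm{Gal}(\ell/k)$ on $\ell^*$. Given $\sigma\in G$ and $g\in G_0$, the element $g(\sigma^{-1}\pi_{\Ll})/\sigma^{-1}\pi_{\Ll}$ is a unit of $\cO_{\Ll}$ congruent to $\psi(g)$ modulo $\pi_{\Ll}$ (because $\sigma^{-1}\pi_{\Ll}$ is again a uniformiser and $\psi$ is uniformiser-independent); applying $\sigma$ turns it into $(\sigma g\sigma^{-1})(\pi_{\Ll})/\pi_{\Ll}$, and reducing modulo $\pi_{\Ll}$ yields $\psi(\sigma g\sigma^{-1})=\bar\sigma(\psi(g))$, where $\bar\sigma\in\mathrm{Gal}(\ell/k)$ is the reduction of $\sigma$. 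Since $L/K$ is abelian we have $\sigma g\sigma^{-1}=g$ for all $\sigma$, so $\psi(g)$ is fixed by every element of $\mathrm{Gal}(\ell/k)$, i.e. $\psi(G_0)\subseteq(\ell^*)^{\mathrm{Gal}(\ell/k)}=k^*$. Hence $e=|G_0|$ divides $|k^*|$.

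It remains to deduce that $K$ contains all $e$th roots of unity. Since $e\mid|k^*|$ it is coprime to $p$, so $x^e-1$ is separable modulo $\pi_{\Kk}$; and because $k^*$ is cyclic of order divisible by $e$, the polynomial $x^e-1$ has exactly $e$ roots in $k$. By Hensel's lemma each lifts uniquely to a root in $\cO_{\Kk}$, producing $e$ distinct $e$th roots of unity in $K$, hence all of them. The one step that requires care is the equivariance of $\psi$, which rests entirely on its independence of the chosen uniformiser (Proposition \ref{quotients}); everything else is routine.
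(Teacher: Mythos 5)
Your proof is correct, but it takes a genuinely different route from the paper's. The paper argues indirectly: it writes down a two-row diagram interlacing the filtration of $K^*$ by $1+\pi_{\Kk}\cO_{\Kk}$ with $N_{\LK}(L^*)$, applies the Snake Lemma, and then invokes the \emph{existence} of the local reciprocity isomorphism (i.e.\ $|K^*/N_{\LK}(L^*)|=ef$) to read off $[\cO_{\Kk}^*:N_{\LK}(L^*)\cap\cO_{\Kk}^*]=e$, whence $e$ divides $|k^*|$ because $\cO_{\Kk}^*/(1+\pi_{\Kk}\cO_{\Kk})\cong k^*$. You instead argue directly inside ramification theory: tameness makes the map $g\mapsto g(\pi_{\Ll})/\pi_{\Ll}$ of Proposition~\ref{quotients} an \emph{injection} $\psi\colon G_0\hookrightarrow\ell^*$, and its independence of the chosen uniformiser gives exactly the $G$-equivariance $\psi(\sigma g\sigma^{-1})=\bar\sigma(\psi(g))$, which together with abelianness forces $\psi(G_0)\subseteq(\ell^*)^{\mathrm{Gal}(\ell/k)}=k^*$. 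The equivariance computation you sketch is correct (apply $\sigma$ to $g(\sigma^{-1}\pi_{\Ll})/\sigma^{-1}\pi_{\Ll}$ and reduce mod $\pi_{\Ll}$). Both proofs then conclude identically with Hensel's lemma. Your version is more self-contained and arguably cleaner in spirit: it does not appeal to the reciprocity map whose explicit description is the whole point of the paper, only to the structure theory of ramification groups already quoted in Section~2.2, and it makes visible exactly where the abelian hypothesis enters (to kill the conjugation action on $G_0$). The paper's version has the advantage of also recording, as a by-product, the exact index $[\cO_{\Kk}^*:N_{\LK}(L^*)\cap\cO_{\Kk}^*]=e$, which it reuses implicitly later (e.g.\ in Lemma~\ref{ramnorms}).
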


\begin{proof}
In the diagram below, the rows are exact and all squares commute.
\begin{displaymath}
\xymatrix{
0\ar[r] & \frac{N_{\LK}(L^*)\cap \cO_{\Kk}^*}{1+\pi_{\Kk}\cO_{\Kk}}\ar[r]\ar@{^{(}->}[d] & \frac{N_{\LK}(L^*)}{1+\pi_{\Kk}\cO_{\Kk}}\ar[r]^{\ \ \ \ \mathfrak{v}_{\Kk}}\ar@{^{(}->}[d] & f\bbZ\ar[r]\ar@{^{(}->}[d] & 0\\
0\ar[r] & \frac{\cO_{\Kk}^*}{1+\pi_{\Kk}\cO_{\Kk}}\ar[r] & \frac{K^*}{1+\pi_{\Kk}\cO_{\Kk}}\ar[r]^{\ \ \ \ \mathfrak{v}_{\Kk}} & \bbZ\ar[r] & 0}
\end{displaymath}
Applying the Snake Lemma, we get the following exact sequence:
\begin{displaymath}
\xymatrix{
0\ar[r] & \frac{\cO_{\Kk}^*}{N_{\LK}(L^*)\cap \cO_{\Kk}^*}\ar[r] & \frac{K^*}{N_{\LK}(L^*)}\ar[r] & \frac{\bbZ}{f\bbZ}\ar[r] & 0
}
\end{displaymath}
The existence of the local reciprocity map tells us that
$$\left|\frac{K^*}{N_{\LK}(L^*)} \right|=\left|\mathrm{Gal}(L/K)\right|=ef$$
so the exact sequence allows us to deduce that
$$\left[\frac{\cO_{\Kk}^*}{1+\pi_{\Kk}\cO_{\Kk}} : \frac{N_{\LK}(L^*)\cap \cO_{\Kk}^*}{1+\pi_{\Kk}\cO_{\Kk}} \right]=\left|\frac{\cO_{\Kk}^*}{N_{\LK}(L^*)\cap \cO_{\Kk}^*}\right|=e.$$
Recalling from Proposition \ref{Ui's} that $\frac{\cO_{\Kk}^*}{1+\pi_{\Kk}\cO_{\Kk}}$ is isomorphic to $k^*$, we conclude that $e$ divides $|k^*|$. Therefore the cyclic group $k^*$ contains a subgroup of size $e$, i.e. a copy of the $e$th roots of unity. $L/K$ is tamely ramified so $p$ is coprime to $e$ by Corollary \ref{G1}. So we can use Hensel's Lemma to lift the $e$th roots of unity in $k^*$ to the $e$th roots of unity in $K$.
\end{proof}

\newtheorem{U1norms}[ramgp]{Proposition}
\begin{U1norms}
\label{U1norms}
Let $L/K$ be a tamely ramified abelian extension of local fields with $[L:K]$ coprime to $p=\mathrm{char}(k)$. Then we have
$$U_{\Kk}^1=1+\pi_{\Kk}\cO_{\Kk}\subset N_{\LK}(L^*).$$
\end{U1norms}

\begin{proof}
We will show that $U_{\Kk}^1\cap N_{\LK}(L^*)=U_{\Kk}^1$. The local existence theorem of class field theory tells us that $N_{\LK}(L^*)$ is an open subgroup of $K^*$.\footnote{See, for example, \cite{Serre}, Chapter XIV, Theorem 1 and its first Corollary.} Hence we must have
\begin{eqnarray*}
U_{\Kk}^m\subset U_{\Kk}^1\cap N_{\LK}(L^*)\subset U_{\Kk}^1
\end{eqnarray*}
for $m$ sufficiently large.
Hence $$[U_{\Kk}^1:U_{\Kk}^1\cap N_{\LK}(L^*)][U_{\Kk}^1\cap N_{\LK}(L^*):U_{\Kk}^m]=[U_{\Kk}^1:U_{\Kk}^m].$$ But we know from Proposition \ref{Ui's} that $[U_{\Kk}^1:U_{\Kk}^m]$ is $|k|^{m-1}$, in particular a power of $p$. Therefore $[U_{\Kk}^1:U_{\Kk}^1\cap N_{\LK}(L^*)]$ is also a power of $p$.

On the other hand, we have
$$\frac{U_{\Kk}^1}{U_{\Kk}^1\cap N_{\LK}(L^*)} \hookrightarrow \frac{K^*}{N_{\LK}(L^*)}.$$
The existence of the local reciprocity map tells us that the right-hand side is isomorphic to $\mathrm{Gal}(L/K)$, which has size $[L:K]$. Thus $\left|\frac{U_{\Kk}^1}{U_{\Kk}^1\cap N_{\LK}(L^*)}\right|$ divides $[L:K]$. But $[L:K]$ is coprime to $p$ and therefore $\left|\frac{U_{\Kk}^1}{U_{\Kk}^1\cap N_{\LK}(L^*)}\right|=1$.
\end{proof}

\section{The cyclic mixed case}
\label{mixedcase}
Let $K$ be a local field and let $L/K$ be a tamely ramified cyclic extension of prime-power degree. Write $K_{nr}$ for the maximal unramified extension of $K$ inside $L$. In this section, we derive a formula which allows us to explicitly compute local reciprocity in the case where the extension $L/K$ is neither totally ramified nor unramified. We do not assume that $K$ contain a primitive root of unity of degree $[L:K]$. For this reason, our work does not follow from the explicit formulae for the Hilbert norm residue symbol given by Iwasawa \cite{Iwasawa}, Serre \cite{Serre}, Fesenko-Vostokov \cite{Fesenko-Vostokov}, De Shalit \cite{De Shalit} and others. We will see in Sections \ref{totramcase}, \ref{unram case} and \ref{abelian} that in fact our formula applies more generally. We summarise our current situation as follows:
\subsection{Situation}
\label{situation}
\begin{itemize}
\item[-] the residue field of $K$, denoted by $k$, has $|k|=p^t$
\item[-]$L/K$ cyclic extension of local fields with $[L:K]=q^{n}$ where $q$ and $p$ are distinct primes
\item[-] the ramification index of the extension $L/K$ satisfies $e=e_{\LK}=q^{d}$ with $1<q^{d}<q^n$
\end{itemize}
Pick a uniformiser, $\pi_{\Ll}$, of $L$. Now $K(\pi_{\Ll})$ is a subfield of $L$ and therefore, since $\mathrm{Gal}(L/K)$ is cyclic of prime-power order, $K(\pi_{\Ll})$ can be placed in the vertical tower below.
\begin{displaymath}
\xymatrix{
L \ar@{-}[d]^{q^d} \\
K_{nr}\ar@{-}[d]^{q^{n-d}}\\
K}
\end{displaymath}
In other words, either $K(\pi_{\Ll})\subset K_{nr}$ or $K_{nr}\subset K(\pi_{\Ll})$. But the extension $K(\pi_{\Ll})/K$ has ramification index $q^d>1$ so cannot be contained in an unramified extension of $K$. Hence we must have $K_{nr}\subset K(\pi_{\Ll})$. But now the ramification index and the residue degree of $K(\pi_{\Ll})/K$ are equal to those of $L/K$. Therefore we deduce that $L=K(\pi_{\Ll})$.

We have $\pi_{\Ll}^{q^d}=u\pi_{\Kk}$ for some uniformiser $\pi_{\Kk}$ of $K$ and some $u\in\cO_{\Ll}^*$. Reducing $\mathrm{mod}\ \pi_{\Ll}$ and recalling that the residue field of $L$ is the same as that of $K_{nr}$, we see that $u=u_1u_0$ where $u_1\equiv 1\ \mathrm{mod}\ \pi_{\Ll}$ and $u_0\in \cO_{\Kk_{nr}}^*$.

Now consider the polynomial $x^{q^d}-u_1$, which reduces to $x^{q^d}-1\ \mathrm{mod}\ \pi_{\Ll}$.
We know from Proposition \ref{ethroot} that the $q^d$th roots of unity are contained in $K$ and therefore in $L$. Since $q^d$ is coprime to the residue characteristic, we can apply Hensel's lemma and lift the roots of the polynomial $\mathrm{mod}\ \pi_\Ll$ to $q^d$th roots of $u_1$ in $L$.

Extracting one of these roots from the equality $\pi_{\Ll}=\sqrt[q^d]{u\pi_{\Kk}}=\sqrt[q^d]{u_1u_0\pi_{\Kk}}$, we see that $\alpha=\sqrt[q^d]{u_0\pi_{\Kk}}\in L$ is a uniformiser for $L$. Therefore, by the argument above, we can write $L=K(\alpha)$. Our situation is the following:
\begin{displaymath}
\xymatrix{
L=K(\alpha) \ar@{-}[d]_{q^d} \\
K_{nr}\ar@{-}[d]\\
K(u_0)\ar@{-}[d]\\
K}
\end{displaymath}

\subsection{A cyclic algebra}

Choose a generator, $\sigma$, of $G=\mathrm{Gal}(L/K)$ and let $\chi\in \mathrm{Hom}(G,\bbQ/{\bbZ})$ be the unique character such that $\chi : \sigma\mapsto\frac{1}{q^n}\ \mathrm{mod}\ \bbZ$. Construct the cyclic algebra $A=(\chi,\pi_{\Kk})$ as follows:

$$ A=(\chi, \pi_{\Kk})=\left\{\sum_{i=0}^{q^n-1}a_i v^i\ \Big|\ a_i\in L=K(\alpha)\right\}$$
with multiplication given by

\begin{itemize}
\item []$v^{q^n}=\pi_{\Kk}$
\item []$v^i a=\sigma^i(a)v^i\ \ \forall a\in L$.
\end{itemize}
$A$ is a central simple algebra over $K$ which is split by $L/K$.  In order to determine the local reciprocity map $\theta_{\LK}$, it will suffice to compute the Hasse invariant of $[A]$. This follows from the definition of $\theta_{\LK}$ given at the end of Section \ref{sectiondefnlocrec} and the fact that the class of $\pi_{\Kk}$ generates ${K^*}/{N_{\LK}(L^*)}$. [The latter is verified in Section \ref{mixedlocrec} where we show that $\theta_{\LK}({\pi_{\Kk}})$ generates $\mathrm{Gal}(L/K)$.]

In order to compute the Hasse invariant of $[A]$, we need to express it as a cyclic algebra coming from an \emph{unramified} extension of $K$. This is possible by Propositions \ref{unramsplit} and \ref{cupprod}. The first step is to find an unramified splitting field for $A$.

\subsection{An unramified splitting field for $A$}Define $\gamma=\alpha v^{-q^{n-d}}$, where $q^{n-d}=f_{\LK}$ is the residue degree of the extension $L/K$. Consider the field $F=K(\gamma)$.

\newtheorem{unram}{Lemma}[subsection]
\begin{unram}
\label{unram}
$F/K$ is an unramified Galois extension.
\end{unram}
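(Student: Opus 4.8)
The plan is to compute $\gamma = \alpha v^{-q^{n-d}}$ explicitly enough to exhibit $F = K(\gamma)$ as an unramified extension of $K$. First I would note that inside the algebra $A$, the element $v$ satisfies $v^{q^n} = \pi_{\Kk}$ and $v\alpha = \sigma(\alpha)v$, so $\gamma^{q^n} = (\alpha v^{-q^{n-d}})^{q^n}$ can be expanded by repeatedly commuting the $\alpha$'s past the powers of $v$. Since $\gamma$ commutes with itself, $\gamma^{q^n} = \left(\prod_{j} \sigma^{-jq^{n-d}}(\alpha)\right) v^{-q^n q^{n-d}} = N_{\LK}(\alpha)^{?}\, \pi_{\Kk}^{-q^{n-d}}$ after collecting terms; the key point is that the product of Galois conjugates of $\alpha$ that appears is (a power related to) the norm $N_{\LK}(\alpha)$, which lies in $K^*$. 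Using $\alpha^{q^d} = u_0\pi_{\Kk}$ and $\sigma$ acting through the tame quotient, one computes $N_{\LK}(\alpha)$ explicitly in terms of $u_0$, $\pi_{\Kk}$, and roots of unity, and concludes that $\gamma$ satisfies a polynomial over $K$ whose reduction mod $\pi_{\Kk}$ is separable (the valuations work out so that $\gamma$ is a unit with residue generating the residue extension).

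Next I would identify $F = K(\gamma)$ with $K_{nr}$, the maximal unramified subextension of $L/K$: the computation should show $\mathfrak{v}_{\Kk}(\gamma) = \mathfrak{v}_{\Kk}(\alpha) - q^{n-d}\cdot\mathfrak{v}_{\Kk}(v)$, and since $\mathfrak{v}_{\Kk}(\alpha) = 1/q^d$ while $\mathfrak{v}_{\Kk}(v) = \mathfrak{v}_{\Kk}(\pi_{\Kk})/q^n = 1/q^n$, this gives $\mathfrak{v}_{\Kk}(\gamma) = 1/q^d - q^{n-d}/q^n = 0$, so $\gamma$ is a unit in the algebra. More precisely, one wants $\gamma$ (or a suitable $K$-multiple) to have residue in $\ell = \bar{K}_{nr}$ generating $\ell$ over $k$, while being integral over $\cO_{\Kk}$; then $K(\gamma)$ is unramified over $K$ of the right degree, and comparing degrees forces $K(\gamma) = K_{nr}$. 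That it is Galois is then automatic since $K_{nr}/K$ is Galois (any unramified extension of a local field is Galois, being generated by roots of unity of order prime to $p$).

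The subtlety — and the main obstacle — is that $\gamma$ a priori lives in the noncommutative algebra $A$, not in a field, so I must check that $K(\gamma)$, the $K$-subalgebra generated by $\gamma$, is genuinely a field and not merely a commutative subring, and that its dimension over $K$ is exactly $q^{n-d}$. This comes down to verifying that $\gamma$ satisfies an irreducible polynomial of degree $q^{n-d}$ over $K$: one shows the minimal polynomial of $\gamma$ over $K$ has degree dividing $q^{n-d}$ (from the explicit relation $\gamma^{q^{n-d}} \in K^*$ up to units, or rather that some small power of $\gamma$ lies in $K$) and degree at least $q^{n-d}$ (because the residue of $\gamma$ must generate the degree-$q^{n-d}$ residue extension, forcing a large minimal polynomial). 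Care is needed with the roots of unity introduced when extracting $q^d$th roots, and with tracking how $\sigma$ permutes $\alpha$ versus $u_0$; but since $u_0 \in \cO_{\Kk_{nr}}^*$ and the tame action is understood, these conjugation computations are routine once set up. Finally, with $\gamma^{q^{n-d}} = c \in K^*$ established and $\mathfrak{v}_{\Kk}(c) = 0$, Hensel/Kummer-type reasoning (as already used in Section \ref{situation}) shows $K(\gamma)/K$ is unramified, completing the proof.
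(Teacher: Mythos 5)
The high-level plan---commute $\alpha$ past powers of $v$ to express a power of $\gamma$ in manageable form, then apply Hensel---is the same as the paper's, but two of your concrete claims are wrong in ways that would derail the argument.

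First, the norm is computed over the wrong base. Since $\sigma^{-q^{n-d}}$ generates $\mathrm{Gal}(L/K_{nr})$, the subgroup of order $q^d$, the product of Galois conjugates $\prod_{j=0}^{q^d-1}\sigma^{-jq^{n-d}}(\alpha)$ equals $N_{\LKnr}(\alpha) = (-1)^{q^d-1}u_0\pi_{\Kk}$, which lies in $K_{nr}^*$, not $K^*$; it is not $N_{\LK}(\alpha)$, nor any power of it. Your $\gamma^{q^n}$ comes out as $\left((-1)^{q^d-1}u_0\right)^{q^{n-d}}$, which is still only in $K_{nr}$. The paper computes the cleaner $\gamma^{q^d}$ directly, obtaining $\gamma^{q^d} = (-1)^{q^d-1}u_0 \in \cO_{\Kk_{nr}}^*$; then $\gamma$ is a root of a polynomial over $K_{nr}$ with unit constant term and exponent $q^d$ coprime to the residue characteristic, so $K_{nr}(\gamma)/K_{nr}$ is unramified by Hensel, hence $F=K(\gamma)$ is unramified over $K$.

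Second, the degree and the identification of $F$ are wrong. You assert $F = K_{nr}$, $[F:K] = q^{n-d}$, and that "$\gamma^{q^{n-d}} \in K^*$ up to units." None of these hold: $\gamma^{q^d}$ already lands in $K_{nr}$ and not in $K$, and no small power of $\gamma$ lands in $K$. The paper establishes $[F:K]=q^n$ immediately after the lemma via $q^{2n}=[A:K]=[A:F][F:K]=q^n[F:K]$, and this is essential to the whole strategy: $F$ must be an unramified splitting field of the \emph{same} degree $q^n$ as $L$ so that $A$ can be rewritten as $(\phi,\pi_{\Kk})$ for a character $\phi$ of $\mathrm{Gal}(F/K)$ and its Hasse invariant read off via Proposition \ref{inv1}. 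In fact $F$ strictly contains $K_{nr}$.

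Your valuation check $\mathfrak{v}_{\Kk}(\gamma)=0$ is a sound heuristic, and the concern you flag about whether $K(\gamma)$ is genuinely a field inside the noncommutative algebra $A$ is legitimate; the paper addresses it not within the lemma's proof but immediately afterward via the dimension count above.
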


\begin{proof}

First we claim that $\gamma^{q^d}=(-1)^{(q^d-1)}\frac{\alpha^{q^d}}{\pi_{\Kk}}=(-1)^{(q^d-1)}u_0\in \cO_{\Kk_{nr}}^*$. In fact, using the multiplication rules detailed above, we see that:
\begin{eqnarray*}
\gamma^{q^d}&=&(\alpha v^{-{q^{n-d}}})^{q^d}\\
 &=&\alpha v^{-{q^{n-d}}}\alpha v^{-{q^{n-d}}}\dots\alpha v^{-{q^{n-d}}}\\
 &=&\alpha\sigma^{-{q^{n-d}}}(\alpha)\sigma^{-2{q^{n-d}}}(\alpha)\dots \sigma^{-(q^d-1)q^{n-d}}(\alpha)(v^{-q^{n-d}})^{q^d}.
\end{eqnarray*}
Now observe that $\sigma^{-{q^{n-d}}}$ generates the unique subgroup of order $q^d$ inside $\mathrm{Gal}(L/K)$ and that this is the subgroup which fixes the intermediate field $K_{nr}$. So the $\sigma^{-i{q^{n-d}}}(\alpha)$ for $0\leq i\leq q^d-1$ are precisely the roots of the polynomial $x^{q^{d}}-u_0\pi_{\Kk}$, which is the minimal polynomial of $\alpha$ over $K_{nr}$. But then the product of the roots must be $(-1)^{(q^d-1)}u_0\pi_{\Kk}$ and, on observing that $(v^{-q^d})^{q^{n-d}}=v^{-q^n}=\pi_{\Kk}^{-1}$, we conclude that $\gamma^{q^d}=(-1)^{(q^d-1)}u_0$ as required.

But now, since $u_0\in\cO_{\Kk_{nr}}^*$ and $q^d$ is coprime to the residue characteristic, Hensel's lemma ensures that $F/K$ is an unramified extension. The residue field $k$ of $K$ is finite so the residue field $\mathfrak{f}$ of $F$ is automatically a Galois extension of $k$. But then $F/K$ must be Galois since it is an unramified extension and we have $\mathrm{Gal}(F/K)\cong\mathrm{Gal}(\mathfrak{f}/k)$.
\end{proof}
Note that for any $i\in \bbN$
$$(v^i\gamma v^{-i})^{q^d}=v^i\gamma^{q^d} v^{-i}=\sigma^i(u_0).$$
So $v^i\gamma v^{-i}$ is a $q^d$th root of a Galois conjugate of $u_0$ and thus a Galois conjugate of $\gamma$.

Let $\tau\in\mathrm{Gal}(F/K)$ denote the generator which maps any $x\in F$ to $v x v^{-1}$. Let $\phi\in \mathrm{Hom}(\mathrm{Gal}(F/K),\bbQ/{\bbZ})$ be the unique character such that \mbox{$\phi:\tau\mapsto\frac{1}{q^n}\ \mathrm{mod}\ \bbZ$}. Recall the definition of the cyclic algebra $A=(\chi,\pi_{\Kk})$ and rewrite $A$ in the following way:
\begin{eqnarray*}
A  =  (\chi, \pi_{\Kk}) & = & \left\{\sum_{i=0}^{q^n-1}a_i v^i\ \Big|\ a_i\in L=K(\alpha)\right\} \\
   & = & \left\{\sum_{i=0}^{q^n-1}c_i v^i\ \Big|\ c_i\in F=K(\gamma)\right\}= (\phi,\pi_{\Kk})
\end{eqnarray*}
where
\begin{itemize}
\item []$v^{q^n}=\pi_{\Kk}$
\item []$v^i c=\tau^i(c)v^i\ \ \forall c\in F$.
\end{itemize}
Note that considering the dimension of $A$ as a vector space over $K$ gives $$q^{2n}=q^n[L:K]=[A:K]=[A:F][F:K]=q^n[F:K]$$
which justifies the implicit assumption that $[F:K]=q^n$.

We can now use this to compute the Hasse invariant of $A$:
$$\mathrm{inv}_{\Kk}(\chi,\pi_{\Kk})=\mathrm{inv}_{\Kk}(\phi,\pi_{\Kk})=\mathfrak{v}_{\Kk}(\pi_{\Kk})\phi(\mathrm{Frob}_{\FK})$$
where $\mathfrak{v}_{\Kk}$ denotes the valuation on $K$, normalised so that $\mathfrak{v}_{\Kk}(\pi_{\Kk})=1$. Observe that $\phi(\mathrm{Frob}_{\FK})=\frac{r}{q^n}$ where $r\in \bbN$ is such that $\tau^r=\mathrm{Frob}_{\FK}$.

\subsection{Determining local reciprocity}
\label{mixedlocrec}
\newtheorem{tau-sigma}{Lemma}[subsection]
\begin{tau-sigma}
\label{tau-sigma}
For all $i\in\bbN$, $\tau^i(\gamma)\gamma^{-1}=\sigma^i(\alpha)\alpha^{-1}$.
\end{tau-sigma}

\begin{proof}
Recall that for all $i\in\bbN\ $, $\tau^i(\gamma)=v^i\gamma v^{-i}$ (by definition of $\tau$). Thus
\begin{eqnarray*}
\tau^i(\gamma)\gamma^{-1} & = & v^i\gamma v^{-i}\gamma^{-1}\\
 & = & v^i (\alpha v^{-q^{n-d}})  v^{-i} v^{q^{n-d}} \alpha^{-1}\\
 & = & v^i\alpha v^{-i} \alpha^{-1}\\
 & = & \sigma^i(\alpha)\alpha^{-1}
\end{eqnarray*}
\end{proof}
Recall that in order to determine the Hasse invariant for $A$, we are looking for an $r\in \bbN$ such that $\tau^r=\mathrm{Frob}_{\FK}$.
In other words, $\tau^r$ should have the same action as $\mathrm{Frob}_{\FK}$ on the residue field of $F$. We need $r\in\bbN$ such that:
$$\tau^r(\gamma)\gamma^{-1}\equiv\gamma^{p^t-1}\equiv ((-1)^{(q^d-1)}u_{0})^{\frac{p^t-1}{q^d}}\ \ \mathrm{mod}\ \pi_{\Kk}$$
where $p^t=|k|$ is the number of elements in the residue field of $K$. We saw in Proposition \ref{ethroot} that $e=e_{\LK}=q^d$ divides $p^t-1$ since the group of $e$th roots of unity injects into $k^*$.

Now apply Lemma \ref{tau-sigma} to get the equivalent condition
\begin{equation}
\label{eq:1st congruence}
\sigma^r(\alpha)\alpha^{-1}\equiv ((-1)^{(e-1)}u_{0})^{\frac{p^t-1}{e}}\equiv \alpha^{p^t-1}((-1)^{(e-1)}\pi_{\Kk})^{\frac{1-p^t}{e}} \ \ \mathrm{mod}\ \pi_{\Kk}.
\end{equation}
The local reciprocity map
$$\theta_{L/K}: \frac{{K^*}}{{N_{L/K}(L^*)}}\rightarrow \mathrm{Gal}(L/K)$$
is the isomorphism which is uniquely determined by $$\mathrm{inv}_{\Kk}(\psi,b)=\psi(\theta_{L/K}(b))$$ for all $b\in K^*$ and all $\psi\in \mathrm{Hom}(\mathrm{Gal}(L/K),\bbQ/\bbZ)$.

Thus, taking $b=\pi_{\Kk}$ and  $\psi=\chi$, where $\chi:\sigma\mapsto \frac{1}{q^n}$, we see that $\theta_{L/K}$ is determined by the following action on ${\pi_{\Kk}}$:
$$\theta_{L/K}:\pi_{\Kk}\mapsto \sigma^r$$
where $r$ is as described above.

Note that $\tau^r=\mathrm{Frob}_{\FK}$ is another generator of $\mathrm{Gal}(F/K)$. Hence $r$ must be coprime to $[F:K]=q^n=[L:K]$. Thus $\sigma^r$ is a generator of $\mathrm{Gal}(L/K)$.

\subsection{Congruence conditions}
\label{congruence conditions}
From now on we will write $\sigma_{b}$ for the image $\theta_{\LK}(b)$ of an element $b\in K^*$ under the local reciprocity map $\theta_{\LK}$. Hence we write the congruence \eqref{eq:1st congruence} described above as follows:
$$\sigma_{\pi_{\Kk}}(\alpha)\alpha^{-1}\equiv \alpha^{p^t-1}((-1)^{(e-1)}\pi_{\Kk})^{\frac{1-p^t}{e}}\ \ \mathrm{mod}\ \pi_{\Kk}$$
where $\alpha$ is the chosen uniformiser of $L$ which generates $L$ over $K$, as described in Section \ref{situation}.

\newtheorem{general element}{Lemma}[subsection]
\begin{general element}
\label{general element}
For all $\beta\in L^*$
\begin{equation} \label{eq:general element}
\frac{\sigma_{\pi_{\Kk}}(\beta)}{\beta}\equiv \frac{\beta^{(p^t-1)}}{((-1)^{(e-1)}\pi_{\Kk})^{(p^t-1)\mathfrak{v}_{\Kk}(\beta)}}\ \ \mathrm{mod}\ \pi_{\Ll}
\end{equation}
where $\pi_{\Ll}$ denotes a uniformiser of $L$ and, in an abuse of notation, we write $\mathfrak{v}_{\Kk}$ for the extension to $L$ of the valuation on $K$, normalised so that \mbox{$\mathfrak{v}_{\Kk}(\pi_{\Kk})=1$}.
\end{general element}

\begin{proof}
First we show that the claim holds for an arbitrary uniformiser $\pi_{\Ll}$ of $L$. By the description in Section \ref{situation} of how to produce $\alpha$ from an arbitrary choice of uniformiser, we can assume that $\pi_{\Ll}=w\alpha$ for some $w\in\cO_{\Ll}^*$ satisfying $w^e \equiv 1\ \mathrm{mod}\ \pi_{\Ll}$. Hence $w$ is congruent to some $e$th root of unity modulo $\pi_{\Ll}$. But these are all in $K$ so are fixed by any element of $\mathrm{Gal}(L/K)$. Therefore $\sigma_{\pi_{\Kk}}(w)\equiv w\ \mathrm{mod}\ \pi_{\Ll}$ and so we have
\begin{eqnarray*}
 \frac{\sigma_{\pi_{\Kk}}(\pi_{\Ll})}{{\pi_{\Ll}}} & = & \frac{\sigma_{\pi_{\Kk}}(w)}{w}\frac{\sigma_{\pi_{\Kk}}(\alpha)}{\alpha}\\
& \equiv & \frac{\sigma_{\pi_{\Kk}}(\alpha)}{\alpha}\\
 & \equiv & \frac{\alpha^{p^t-1}}{((-1)^{(e-1)}\pi_{\Kk})^{(p^t-1)\mathfrak{v}_{\Kk}(\alpha)}}\ \ \mathrm{mod}\ \pi_{\Ll}.
\end{eqnarray*}
Observe that, since $e$ divides $p^t-1$, $\pi_{\Ll}^{p^t-1}=w^{p^t-1}\alpha^{p^t-1}\equiv \alpha^{p^t-1}\ \mathrm{mod}\ \pi_{\Ll}$. Putting this together with the fact that $\mathfrak{v}_{\Kk}(\pi_{\Ll})=\mathfrak{v}_{\Kk}(\alpha)$ we conclude that the congruence \eqref{eq:general element} holds for $\beta=\pi_{\Ll}$.

Note that any unit in $\cO_{\Ll}^*$ can be expressed as a quotient of two uniformisers of $L$. Now observe that if the congruence \eqref{eq:general element} holds for two elements $\beta_1,\beta_2\in L^*$ then it also holds for their product or quotient. These last two remarks show that the general case follows immediately from the above computation for a uniformiser of $L$.
\end{proof}

\theoremstyle{remark}\newtheorem{suffice}[general element]{Remark}
\begin{suffice}
\label{suffice}
In going from congruence \eqref{eq:1st congruence} to congruence \eqref{eq:general element} we have exchanged an equivalence modulo $\pi_{\Kk}$ for an equivalence modulo $\pi_{\Ll}$. But the congruence modulo $\pi_{\Ll}$ is still strong enough to completely determine the local reciprocity map $\theta_{\LK}$. This rigidity follows from the fact that the extension $L/K$ is \emph{tamely} ramified, i.e. the ramification group
$$G_1=\left\{g\in\mathrm{Gal}(L/K)\Big| g(x)\equiv x \ \mathrm{mod}\ \pi_{\Ll}^2\ \ \forall x\in \cO_{\Ll}\right\}\ \ \textrm{is trivial.}$$
Recall that, since $L/K_{nr}$ is totally ramified, $\cO_{\Ll}=\cO_{\Kk_{nr}}[\pi_{\Ll}]$ and therefore
$$G_1=G_0\cap \left\{g\in\mathrm{Gal}(L/K)\Big| g(\pi_{\Ll}){\pi_{\Ll}}^{-1}\equiv 1\ \mathrm{mod}\ \pi_{\Ll}\right\}$$
where the inertia group
\begin{eqnarray*}
G_0 & = & \left\{g\in\mathrm{Gal}(L/K)\Big| g(x)\equiv x \ \mathrm{mod}\ \pi_{\Ll}\ \ \forall x\in \cO_{\Ll}\right\}=\mathrm{Gal}(L/K_{nr})\\
\ & = & \left\{g\in \mathrm{Gal}(L/K)\Big| g(y)\equiv y\ \mathrm{mod}\ \pi_{\Ll}\ \ \forall y\in \cO_{\Ll}^*\right\}
\end{eqnarray*}
The last equality holds because any element $g\in\mathrm{Gal}(L/K)$ preserves valuations so the condition $g(x)\equiv x \ \mathrm{mod}\ \pi_{\Ll}\ \ \forall x\in \pi_{\Ll}\cO_{\Ll}$ is trivially satisfied.

This tells us that
$$\left\{g\in\mathrm{Gal}(L/K)\Big| g(x){x}^{-1}\equiv 1\ \mathrm{mod}\ \pi_{\Ll}\ \ \forall\ 0\neq x\in\cO_{\Ll}\right\}=G_1=\{1\}.$$
In other words, the congruences \eqref{eq:general element} for all $0\neq\beta\in \cO_{\Ll}$ are enough to specify the element $\sigma_{\pi_{\Kk}}\in\mathrm{Gal}(L/K)$.
\end{suffice}
\theoremstyle{plain}\newtheorem{general congruence}[general element]{Proposition}

\begin{general congruence}
\label{general congruence}
For all $i\in\bbN$, for all $u\in\cO_{\Kk}^*$ and for all $\beta\in L^*$
\begin{equation}
\label{eq:general congruence}
\frac{\sigma_{u\pi_{\Kk}^i}(\beta)}{\beta}\equiv \frac{\beta^{(p^{ti}-1)}}{((-1)^{(e-1)}\pi_{\Kk})^{(p^{ti}-1)\mathfrak{v}_{\Kk}(\beta)}u^{(p^t-1)\mathfrak{v}_{\Kk}(\beta)}}\ \ \mathrm{mod}\ \pi_{\Ll}.
\end{equation}
\end{general congruence}
We will return to the proof of Proposition \ref{general congruence} after proving the following lemmas.

\newtheorem{pipower congruence}[general element]{Lemma}
\begin{pipower congruence}
\label{pipower congruence}
For all $i\in\bbN$ and for all $\beta\in L^*$
$$\frac{\sigma_{\pi_{\Kk}^i}(\beta)}{\beta}\equiv \frac{\beta^{(p^{ti}-1)}}{((-1)^{(e-1)}\pi_{\Kk})^{(p^{ti}-1)\mathfrak{v}_{\Kk}(\beta)}}\ \ \mathrm{mod}\ \pi_{\Ll}.$$
\end{pipower congruence}

\begin{proof}
We prove this by induction on $i$. Lemma \ref{general element} gives the result for $i=1$. Suppose the claim holds for some $j\in\bbN$. Then
$$\frac{\sigma_{\pi_{\Kk}^{j+1}}(\beta)}{\beta}=\frac{\sigma_{\pi_{\Kk}}\circ\sigma_{\pi_{\Kk}^{j}}(\beta)}{\beta}
=\sigma_{\pi_{\Kk}}\left(\frac{\sigma_{\pi_{\Kk}^j}(\beta)}{\beta}\right)\frac{\sigma_{\pi_{\Kk}}(\beta)}{\beta}.$$
By the induction hypothesis and Lemma \ref{general element} we have
\begin{eqnarray*}
& & \sigma_{\pi_{\Kk}}\left(\frac{\sigma_{\pi_{\Kk}^j}(\beta)}{\beta}\right)\frac{\sigma_{\pi_{\Kk}}(\beta)}{\beta}\\ & \equiv &
\sigma_{\pi_{\Kk}}\left( \frac{\beta^{(p^{tj}-1)}}{((-1)^{(e-1)}\pi_{\Kk})^{(p^{tj}-1)\mathfrak{v}_{\Kk}(\beta)}}\right)\frac{\sigma_{\pi_{\Kk}}(\beta)}{\beta}\\
\\
 &  \equiv &  \frac{{\sigma_{\pi_{\Kk}}(\beta)}^{p^{tj}-1}}{((-1)^{(e-1)}\pi_{\Kk})^{(p^{tj}-1)\mathfrak{v}_{\Kk}(\beta)}}\ \frac{\sigma_{\pi_{\Kk}}(\beta)}{\beta}\\ \\
   & \equiv & \left(\frac{\sigma_{\pi_{\Kk}}(\beta)}{\beta}\right)^{p^{tj}}\frac{\beta^{(p^{tj}-1)}}{((-1)^{(e-1)}\pi_{\Kk})^{(p^{tj}-1)\mathfrak{v}_{\Kk}(\beta)}}\ \\ \\
    & \equiv & \left(\frac{\beta^{(p^t-1)}}{((-1)^{(e-1)}\pi_{\Kk})^{(p^t-1)\mathfrak{v}_{\Kk}(\beta)}}\right)^{p^{tj}}\frac{\beta^{(p^{tj}-1)}}{((-1)^{(e-1)}\pi_{\Kk})^{(p^{tj}-1)\mathfrak{v}_{\Kk}(\beta)}}\ \\ \\
     & \equiv & \frac{\beta^{(p^{(j+1)t}-1)}}{((-1)^{(e-1)}\pi_{\Kk})^{(p^{(j+1)t}-1)\mathfrak{v}_{\Kk}(\beta)}}
  \ \ \mathrm{mod}\ \pi_{\Ll}
\end{eqnarray*}
\end{proof}

\theoremstyle{remark}\newtheorem{any unif}[general element]{Remarks}
\begin{any unif}
\label{any unif}
\begin{enumerate}
\item{}Notice that the uniformiser $\pi_{\Kk}$ was chosen arbitrarily, hence the same congruences apply for any given uniformiser of $K$. I.e. if $\pi_{\Kk}'$ is another uniformiser of $K$, then for all $i\in\bbN$ and for all $\beta\in L^*$
    $$\frac{\sigma_{\pi_{\Kk}'^i}(\beta)}{\beta}\equiv \frac{\beta^{(p^{ti}-1)}}{\left((-1)^{(e-1)}\pi_{\Kk}'\right)^{(p^{ti}-1)\mathfrak{v}_{\Kk}(\beta)}}\ \ \mathrm{mod}\ \pi_{\Ll}.$$

\item{}The group $K^*/N_{\LK}(L^*)$ is cyclic of order $q^n$ so
$\sigma_{\pi_{\Kk}^{-1}}=\sigma_{\pi_{\Kk}^{q^n-1}}.$
\end{enumerate}
\end{any unif}

\theoremstyle{plain}\newtheorem{unit congruence}[general element]{Lemma}

\begin{unit congruence}
\label{unit congruence}
For all $u\in\cO_{\Kk}^*$ and for all $\beta\in L^*$

$$\frac{\sigma_{u}(\beta)}{\beta}\equiv \frac{1}{u^{(p^t-1)\mathfrak{v}_{\Kk}(\beta)}}\ \ \mathrm{mod}\ \pi_{\Ll}.$$
\end{unit congruence}

\begin{proof}
Write $u=\frac{\pi_{\Kk}'}{\pi_{\Kk}}$ where $\pi_{\Kk}'$ and $\pi_{\Kk}$ are uniformisers of $K$. Now
$$\frac{\sigma_{u}(\beta)}{\beta}=\frac{\sigma_{\pi_{\Kk}^{-1}}\circ\sigma_{\pi_{\Kk}'}(\beta)}{\beta}=\frac{\sigma_{\pi_{\Kk}^{q^n-1}}\circ\sigma_{\pi_{\Kk}'}(\beta)}{\beta}=\sigma_{\pi_{\Kk}^{q^n-1}}\left(\frac{\sigma_{\pi_{\Kk}'}(\beta)}{\beta}\right)\frac{\sigma_{\pi_{\Kk}^{q^n-1}}(\beta)}{\beta}$$

Applying Lemma \ref{pipower congruence}, we see that
\begin{eqnarray*}
& & \sigma_{\pi_{\Kk}^{q^n-1}}\left(\frac{\sigma_{\pi_{\Kk}'}(\beta)}{\beta}\right)\frac{\sigma_{\pi_{\Kk}^{q^n-1}}(\beta)}{\beta}\\
& \equiv & \sigma_{\pi_{\Kk}^{q^n-1}}\left(\frac{\beta^{(p^t-1)}}{\left((-1)^{(e-1)}\pi_{\Kk}'\right)^{(p^t-1)\mathfrak{v}_{\Kk}(\beta)}}\right)\frac{\sigma_{\pi_{\Kk}^{q^n-1}}(\beta)}{\beta}\\ \\
 & \equiv & \left(\frac{\sigma_{\pi_{\Kk}^{q^n-1}}(\beta)}{\beta}\right)^{p^t} \frac{\beta^{(p^{t}-1)}}{\left((-1)^{(e-1)}\pi_{\Kk}'\right)^{(p^t-1)\mathfrak{v}_{\Kk}(\beta)}}\\ \\
 & \equiv & \left(\frac{\beta^{(p^{(q^n-1)t}-1)}}{((-1)^{(e-1)}\pi_{\Kk})^{(p^{(q^n-1)t}-1)\mathfrak{v}_{\Kk}(\beta)}}\right)^{p^t} \frac{\beta^{(p^{t}-1)}}{\left((-1)^{(e-1)}\pi_{\Kk}'\right)^{(p^{t}-1)\mathfrak{v}_{\Kk}(\beta)}}\\ \\
  & \equiv & \frac{\beta^{(p^{q^nt}-1)}}{((-1)^{(e-1)}\pi_{\Kk})^{(p^{q^nt}-1)\mathfrak{v}_{\Kk}(\beta)}}\ \frac{1}{u^{(p^t-1)\mathfrak{v}_{\Kk}(\beta)}}\ \ \mathrm{mod}\ \pi_{\Ll}.
\end{eqnarray*}
So we have reduced the problem to showing that
$$\frac{\beta^{(p^{q^nt}-1)}}{((-1)^{(e-1)}\pi_{\Kk})^{(p^{q^nt}-1)\mathfrak{v}_{\Kk}(\beta)}}\equiv 1 \ \ \mathrm{mod}\ \pi_{\Ll}.$$
Now recall that the residue field of $L$, denoted $\ell$, has $p^{q^{(n-d)}t}$ elements. Thus $\ell^*$ is a multiplicative group of order $p^{q^{(n-d)}t}-1$. Write
$$M=\frac{p^{q^nt}-1}{p^{q^{(n-d)}t}-1}=1+p^{q^{(n-d)}t}+\left(p^{q^{(n-d)}t}\right)^2+\ldots +\left(p^{q^{(n-d)}t}\right)^{q^d-1}$$
and note that, since $q^d$ divides $p^t-1$, the right-hand side is divisible by $q^d$. Hence for any $\beta\in L^*$, $M\mathfrak{v}_{\Kk}(\beta)\in\bbZ$ and it makes sense to write
\begin{eqnarray*}
 \frac{\beta^{(p^{q^nt}-1)}}{((-1)^{(e-1)}\pi_{\Kk})^{(p^{q^nt}-1)\mathfrak{v}_{\Kk}(\beta)}} & = & \left(\frac{\beta^M}{((-1)^{(e-1)}\pi_{\Kk})^{M\mathfrak{v}_{\Kk}(\beta)}}\right)^{p^{q^{(n-d)}t}-1}\\
& \equiv & 1 \ \ \mathrm{mod}\ \pi_{\Ll}.
\end{eqnarray*}
\end{proof}
We now return to the proof of Proposition \ref{general congruence}.

\begin{proof}
Using Lemmas \ref{pipower congruence} and \ref{unit congruence}, we see that
\begin{eqnarray*}
\frac{\sigma_{u\pi_{\Kk}^i}(\beta)}{\beta} & =  & \sigma_{u}\left(\frac{\sigma_{\pi_{\Kk}^i}(\beta)}{\beta}\right)\frac{\sigma_u(\beta)}{\beta}\\ \\
 & \equiv & \sigma_u\left(\frac{\beta^{(p^{ti}-1)}}{((-1)^{(e-1)}\pi_{\Kk})^{(p^{ti}-1)\mathfrak{v}_{\Kk}(\beta)}}\right)\frac{\sigma_u(\beta)}{\beta}\\ \\
  & \equiv & \left(\frac{\sigma_u(\beta)}{\beta}\right)^{p^{ti}}\frac{\beta^{(p^{ti}-1)}}{((-1)^{(e-1)}\pi_{\Kk})^{(p^{ti}-1)\mathfrak{v}_{\Kk}(\beta)}}\\ \\
   & \equiv & \left(\frac{1}{u^{(p^t-1)\mathfrak{v}_{\Kk}(\beta)}}\right)^{p^{ti}}\frac{\beta^{(p^{ti}-1)}}{((-1)^{(e-1)}\pi_{\Kk})^{(p^{ti}-1)\mathfrak{v}_{\Kk}(\beta)}}\ \ \mathrm{mod}\ \pi_{\Ll}.
\end{eqnarray*}
Note that the reduction of $u\in\cO_{\Kk}^*$ modulo $\pi_{\Ll}$ is an element of $k\subset\ell$, where $k$ denotes the residue field of $K$. Recall that $|k|=p^t$, hence $u^{p^{ti}}\equiv u \ \mathrm{mod}\ \pi_{\Kk}$ and therefore
$$  \left(\frac{1}{u^{(p^t-1)\mathfrak{v}_{\Kk}(\beta)}}\right)^{p^{ti}}
\equiv \frac{1}{u^{(p^t-1)\mathfrak{v}_{\Kk}(\beta)}} \ \ \mathrm{mod}\ \pi_{\Ll}$$
as required.
\end{proof}
\section{The cyclic totally ramified case}
\label{totramcase}
Once again, let $K$ be a local field and let $L/K$ be a tamely ramified cyclic extension of prime-power degree. Now we consider the case where the extension $L/K$ is totally tamely ramified. In this case, Proposition \ref{ethroot} tells us that $K$ contains a primitive root of unity of degree $[L:K]$. This means that local reciprocity can be explicitly calculated via the Hilbert norm residue symbol, for which explicit formulae have been given by Iwasawa \cite{Iwasawa}, Serre \cite{Serre}, Fesenko-Vostokov \cite{Fesenko-Vostokov}, De Shalit \cite{De Shalit} and others. Nevertheless, we include this case for completeness. Our aim is to determine local reciprocity and show that the formula of Proposition \ref{general congruence} still applies. The new situation is as follows:

\subsection{Situation}
\label{ramsituation}
\begin{itemize}
\item[-] the residue field of $K$, denoted $k$, has $|k|=p^t$
\item[-]$L/K$ cyclic totally ramified extension of local fields with\\ \mbox{$[L:K]=q^{n}$} where $q$ and $p$ are distinct primes
\end{itemize}
$L/K$ is totally tamely ramified of degree $q^n$ so we know from Proposition \ref{ethroot} that $K$ contains the $q^n$th roots of unity. Thus Kummer theory allows us to write $L=K(\sqrt[q^n]{\pi_{\Kk}})$ for some uniformiser $\pi_{\Kk}$ of $K$. To simplify notation we write $\delta=\sqrt[q^n]{\pi_{\Kk}}$.

\subsection{A cyclic algebra}
Choose a generator $\sigma$ of $\mathrm{Gal}(L/K)$. $\sigma$ acts on $\delta$ as multiplication by some primitive $q^n$th root of unity which we denote by $\zeta_{q^n}$. Let $\chi\in \mathrm{Hom}(G,\bbQ/{\bbZ})$ be the unique character such that $\chi : \sigma\mapsto\frac{1}{q^n}\ \mathrm{mod}\ \bbZ$.\\Choose an element $\eta\in\cO_{\Kk}^*$ whose reduction modulo $\pi_{\Kk}$ (which we denote by $\overline{\eta}$) generates the cyclic group $k^*$ and construct the cyclic algebra $B=(\chi,\eta)$ as follows:
$$ B=(\chi, \eta)=\left\{\sum_{i=0}^{q^n-1}a_i v^i\ \Big|\ a_i\in L=K(\delta)\right\}$$
with multiplication given by
\begin{itemize}
\item []$v^{q^n}=\eta$
\item []$v^i a=\sigma^i(a)v^i\ \forall\ a\in L$.
\end{itemize}
$B$ is a central simple algebra over $K$ which is split by $L/K$. Hereafter, our general strategy will be identical to that of Section \ref{mixedcase}. We will calculate the Hasse invariant of $[B]$ and use this to determine the local reciprocity map $\theta_{\LK}$. The only difference will be the way in which we construct the unramified splitting field for $B$.

\subsection{An unramified splitting field for $B$}

\newtheorem{ramunram}{Lemma}[subsection]
\begin{ramunram}
\label{ramunram}
Define $F=K(v)$. Then $F/K$ is an unramified Galois extension.
\end{ramunram}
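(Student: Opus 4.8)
The plan is to mimic the argument of Lemma \ref{unram}: exhibit a power of $v$ that lands in $\cO_{\Kk}^*$ and reduces to a generator of the residue field units, so that Hensel's lemma forces the extension to be unramified, and then conclude Galois-ness from the fact that finite residue-field extensions are automatically Galois. Concretely, first I would compute $v^{q^n}=\eta$ directly from the multiplication rule, and observe that since $\overline{\eta}$ generates the cyclic group $k^*$ of order $p^t-1$, the element $v$ satisfies the polynomial $x^{q^n}-\eta$ over $K$. Reducing modulo $\pi_{\Kk}$, the roots of $x^{q^n}-\overline{\eta}$ in $\overline{k}$ are distinct (here one uses that $q$ is coprime to $p$, so the polynomial is separable), hence $F=K(v)$ is unramified over $K$: the uniformiser $\pi_{\Kk}$ of $K$ remains a uniformiser of $F$ because $v$ is a unit. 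One also needs $[F:K]=q^n$, which is forced by the same dimension count as in Section \ref{mixedcase}, namely $q^{2n}=[B:K]=[B:F][F:K]=q^n[F:K]$; I would include this remark for completeness.

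Next, to see that $F/K$ is Galois, I would invoke exactly the reasoning already used at the end of the proof of Lemma \ref{unram}: the residue field $\mathfrak{f}$ of $F$ is a finite extension of the finite field $k$, hence automatically Galois over $k$, and for an unramified extension of local fields $F/K$ one has $\mathrm{Gal}(F/K)\cong\mathrm{Gal}(\mathfrak{f}/k)$; therefore $F/K$ is Galois (and in fact cyclic).

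The only genuinely delicate point is checking that $x^{q^n}-\overline{\eta}$ does indeed have a simple root over $k$ (or that $F/K$ is unramified of the correct degree), which comes down to the tameness hypothesis $\gcd(q,p)=1$ together with the fact that $\overline{\eta}$ is a generator of $k^*$; I do not expect this to be hard, but it is the step that actually uses the hypotheses. Everything else is formal and parallels Lemma \ref{unram} verbatim.
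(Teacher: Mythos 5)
Your argument is correct and takes essentially the same route as the paper: $v^{q^n}=\eta\in\cO_{\Kk}^*$, $q^n$ coprime to $p$, Hensel's lemma gives unramifiedness, and Galois-ness follows from $\mathrm{Gal}(F/K)\cong\mathrm{Gal}(\mathfrak{f}/k)$. The extra remarks about $\overline{\eta}$ generating $k^*$ and the dimension count $[F:K]=q^n$ are not needed for the lemma itself (the paper handles the degree afterwards), but they do no harm.
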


\begin{proof}
$v$ is a root of the polynomial $g(x)=x^{q^n}-\eta\in K[x]$. Since $\eta\in\cO_{\Kk}^*$ and $q^n$ is coprime to the residue characteristic, Hensel's Lemma ensures that $F/K$ is an unramified extension. But then $F/K$ must be Galois since it is an unramified extension and we have $\mathrm{Gal}(F/K)\cong\mathrm{Gal}(\mathfrak{f}/k)$.
\end{proof}
Using the multiplication rules for $B$ described above, we see that
$$\delta^i v\delta^{-i}=\delta^i \sigma(\delta)^{-i}v=\zeta_{q^n}^{-i}v$$
for all $i\in\bbN$.

Let $\tau\in\mathrm{Gal}(F/K)$ denote the generator which maps any $x\in F$ to $\delta x {\delta}^{-1}$. Let $\phi\in \mathrm{Hom}(\mathrm{Gal}(F/K),\bbQ/{\bbZ})$ be the unique character such that $\phi:\tau\mapsto\frac{1}{q^n}\ \mathrm{mod}\ \bbZ$. Recall the definition of the cyclic algebra $B=(\chi,\eta)$ and rewrite $B$ in the following way:
\begin{eqnarray*}
B=(\chi, \eta) & = & \left\{\sum_{i=0}^{q^n-1}a_i v^i\ \Big|\ a_i\in L=K(\delta)\right\}\\
 & = & \left\{\sum_{i=0}^{q^n-1}c_i \delta^i\ \Big|\ c_i\in F=K(v)\right\}=(\phi,\pi_{\Kk})
\end{eqnarray*}
where
\begin{itemize}
\item []$\delta^{q^n}=\pi_{\Kk}$
\item []$\delta^k c=\tau^k(c)\delta^k\ \forall \ c\in F$.
\end{itemize}
Note that considering the dimension of $B$ as a vector space over $K$ gives $$q^{2n}=q^n[L:K]=[B:K]=[B:F][F:K]=q^n[F:K]$$
which justifies the implicit assumption that $[F:K]=q^n$.

We can now use this to compute the Hasse invariant of $B$:
$$\mathrm{inv}_{\Kk}(\chi,\eta)=\mathrm{inv}_{\Kk}(\phi,\pi_{\Kk})=\mathfrak{v}_{\Kk}(\pi_{\Kk})\phi\left(\mathrm{Frob}_{\FK}\right)$$
where $\mathfrak{v}_{\Kk}$ denotes the valuation on $K$, normalised so that $\mathfrak{v}_{\Kk}(\pi_{\Kk})=1$. Observe that $\phi(\mathrm{Frob}_{\FK})=\frac{r}{q^n}$ where $r\in \bbN$ is such that $\tau^r=\mathrm{Frob}_{\FK}$.

\subsection{Determining local reciprocity}
We are looking for an $r\in \bbN$ such that $\tau^r$ has the same action as $\mathrm{Frob}_{\FK}$ on the residue field of $F$. In other words, we need $r\in\bbN$ such that:
$$\tau^r(v)v^{-1}\equiv v^{p^t-1}\equiv \eta^{\frac{p^t-1}{q^n}}\ \ \mathrm{mod}\ \pi_{\Kk}$$
where $p^t=|k|$ is the number of elements in the residue field of $K$. We saw in Proposition \ref{ethroot} that $e=e_{\LK}=q^n$ divides $p^t-1$ since the group of $e$th roots of unity injects into $k^*$.

Recall from the definition of $\tau$ that $$\tau^r(v)v^{-1}=\zeta_{q^n}^{-r}=\left(\sigma^r(\delta)\delta^{-1}\right)^{-1}.$$
Thus the local reciprocity map $\theta_{L/K}$ is determined by the following action on $\eta$:
$$\theta_{L/K}:\eta\mapsto \sigma^r$$
where $r\in \bbN$ is such that $$\zeta_{q^n}^{r}=\sigma^r(\delta)\delta^{-1}\equiv \eta^{-\left(\frac{p^t-1}{q^n}\right)}\ \ \mathrm{mod}\ \pi_{\Kk}.$$

Note that $\tau^r=\mathrm{Frob}_{\FK}$ is another generator of $\mathrm{Gal}(F/K)$. Hence $r$ must be coprime to $[F:K]=q^n=[L:K]$. Thus $\sigma^r$ is a generator of $\mathrm{Gal}(L/K)$.

\subsection{Congruence conditions}In the notation of Section \ref{congruence conditions}, we have
$$\frac{\sigma_{\eta}(\delta)}{\delta}\equiv \frac{1}{\eta^{(p^t-1)\mathfrak{v}_{\Kk}(\delta)}}\ \ \mathrm{mod}\ \pi_{\Kk}.$$
Observe that this is precisely the congruence predicted by Proposition \ref{general congruence} in the mixed case. In fact, we will show that the congruences (\ref{eq:general congruence}) of Proposition \ref{general congruence} are also true in the totally ramified case.

First note that, since $L/K$ is totally ramified, $\cO_{\Ll}=\cO_{\Kk}[\delta]$ and therefore for any $g\in\mathrm{Gal}(L/K)$
$$g(w)\equiv w \ \mathrm{mod}\ \pi_{\Ll}\ \forall\ w\in\cO_{\Ll}^*.$$

The following Lemma is now immediate:
\newtheorem{ramgeneral element}{Lemma}[subsection]
\begin{ramgeneral element}
\label{ramgeneral element}
For all $\beta\in L^*$
\begin{equation*}
\frac{\sigma_{\eta}(\beta)}{\beta}\equiv \frac{1}{\eta^{(p^t-1)\mathfrak{v}_{\Kk}(\beta)}}\ \ \mathrm{mod}\ \pi_{\Ll}.
\end{equation*}
\end{ramgeneral element}
\begin{flushright}
$\Box$
\end{flushright}

\newtheorem{ramnorms}[ramgeneral element]{Lemma}
\begin{ramnorms}
\label{ramnorms}
A unit $u\in\cO_{\Kk}^*$ is the norm of some element of $L^*$ if and only if $u^{\frac{p^t-1}{q^n}}\equiv1\ \mathrm{mod}\ \pi_{\Kk}$.
\end{ramnorms}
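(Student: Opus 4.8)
The plan is to translate the condition ``$u$ is a norm'' into a congruence in the residue field, exploiting the fact that every principal unit is a norm.

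First I would apply Proposition \ref{U1norms}, which is available because $[L:K]=q^n$ is coprime to $p$: it gives $U_{\Kk}^1=1+\pi_{\Kk}\cO_{\Kk}\subseteq N_{\LK}(L^*)$. Since $U_{\Kk}^1$ is precisely the kernel of the reduction map $\cO_{\Kk}^*\twoheadrightarrow k^*$, the subgroup $N_{\LK}(L^*)\cap\cO_{\Kk}^*$ is a union of $U_{\Kk}^1$-cosets, hence the full preimage of its image $H\subseteq k^*$. Thus a unit $u\in\cO_{\Kk}^*$ lies in $N_{\LK}(L^*)$ if and only if its reduction $\bar u$ lies in $H$, and the whole problem reduces to identifying $H$.

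Next I would compute the index of $H$ in $k^*$. By Proposition \ref{ethroot}, $q^n=e_{\LK}$ divides $|k^*|=p^t-1$, so $(p^t-1)/q^n$ is an integer. The Snake-lemma exact sequence from the proof of Proposition \ref{ethroot}, specialised to the totally ramified case $f_{\LK}=1$, reads $0\to\cO_{\Kk}^*/(N_{\LK}(L^*)\cap\cO_{\Kk}^*)\to K^*/N_{\LK}(L^*)\to\bbZ/f_{\LK}\bbZ\to 0$ with $\bbZ/f_{\LK}\bbZ=0$; equivalently, $N_{\LK}(\delta)=\delta^{q^n}\cdot(\text{root of unity})$ has valuation $1$, so $\mathfrak{v}_{\Kk}$ maps $N_{\LK}(L^*)$ onto $\bbZ$ and the valuation contributes nothing to the quotient. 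Hence $\cO_{\Kk}^*/(N_{\LK}(L^*)\cap\cO_{\Kk}^*)\cong K^*/N_{\LK}(L^*)\cong\mathrm{Gal}(L/K)$, of order $q^n$, so $H$ has index $q^n$ in the cyclic group $k^*$ of order $p^t-1$. The unique such subgroup is $(k^*)^{q^n}=\{x\in k^*:x^{(p^t-1)/q^n}=1\}$, which identifies $H$ and yields exactly the stated criterion: $u\in\cO_{\Kk}^*$ is a norm from $L^*$ iff $\bar u^{(p^t-1)/q^n}=1$ in $k^*$, i.e. iff $u^{(p^t-1)/q^n}\equiv 1\pmod{\pi_{\Kk}}$.

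The only step that is not purely formal is pinning down the index of $H$ as exactly $q^n$ rather than merely a divisor of it; this rests on the order statement for $K^*/N_{\LK}(L^*)$ coming from the reciprocity map (already in hand) together with the observation that $\delta$ has norm of valuation $1$. As an alternative that avoids re-running the Snake lemma, one can read $H$ off directly from the Hasse-invariant computation made earlier in this section: there $(\chi,\eta)$ was shown to have order $q^n$ in $\mathrm{Br}(L/K)$, so $\eta\notin N_{\LK}(L^*)$ and the class of $\eta$ generates $K^*/N_{\LK}(L^*)$; since $\bar\eta$ generates $k^*$, writing $\bar u=\bar\eta^{\,j}$ and $u\equiv\eta^{\,j}\pmod{\pi_{\Kk}}$ (so $u\eta^{-j}\in U_{\Kk}^1\subseteq N_{\LK}(L^*)$), one sees $u$ is a norm precisely when $q^n\mid j$, i.e. when $\bar u\in(k^*)^{q^n}$ — the same conclusion.
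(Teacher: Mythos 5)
Your argument is correct and follows essentially the same route as the paper's proof: you invoke Proposition \ref{U1norms} to get $U_{\Kk}^1\subset N_{\LK}(L^*)$, reduce the question modulo $U_{\Kk}^1$ to a subgroup $H$ of the cyclic group $k^*$, use the Snake-lemma isomorphism from the proof of Proposition \ref{ethroot} (with $f=1$) to pin the index of $H$ down to $q^n$, and then identify $H$ as the unique such subgroup. Your supplementary observation that one can instead read off the index from the Hasse-invariant computation (via $\eta$ generating $K^*/N_{\LK}(L^*)$) is also sound and matches how the paper itself uses the lemma immediately afterwards.
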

\begin{proof}
$L/K$ is tamely ramified of prime-power degree, so \mbox{Proposition \ref{U1norms}} tells us that $1+\pi_{\Kk}\cO_{\Kk}\subset N_{\LK}(L^*)$.

The subset of units which are norms is the kernel of the natural projection $\cO_{\Kk}^*\twoheadrightarrow \frac{\cO_{\Kk}^*}{N_{\LK}(L^*)\cap\cO_{\Kk}^*}$. This map factors as
$$\cO_{\Kk}^*\twoheadrightarrow\frac{\cO_{\Kk}^*}{1+\pi_{\Kk}\cO_{\Kk}}\twoheadrightarrow\frac{\cO_{\Kk}^*}{N_{\LK}(L^*)\cap\cO_{\Kk}^*}.$$
Recall from Proposition \ref{Ui's} that $\frac{\cO_{\Kk}^*}{1+\pi_{\Kk}\cO_{\Kk}}$ is isomorphic to $k^*$ and is therefore a cyclic group of order $p^t-1$. In addition, we saw in \mbox{Proposition \ref{ethroot}} that when $L/K$ is totally tamely ramified the Snake Lemma gives an isomorphism $$\frac{\cO_{\Kk}^*}{N_{\LK}(L^*)\cap\cO_{\Kk}^*}\ \tilde{\rightarrow}\ \frac{K^*}{N_{\LK}(L^*)}.$$ The existence of the local reciprocity map tells us that in our case the latter is a cyclic group of order $q^n$. Therefore the kernel of the natural projection $\frac{\cO_{\Kk}^*}{1+\pi_{\Kk}\cO_{\Kk}}\twoheadrightarrow\frac{\cO_{\Kk}^*}{N_{\LK}(L^*)\cap\cO_{\Kk}^*}$ is the unique subgroup of order $\frac{p^t-1}{q^n}$ of $\frac{\cO_{\Kk}^*}{1+\pi_{\Kk}\cO_{\Kk}}$. Thus our claim is proved.
\end{proof}

Observe that the local reciprocity map sends $\eta$ to a generator of $\mathrm{Gal}(L/K)$. Thus $\eta$ must generate $K^*/N_{\LK}(L^*)$. For any $u\in\cO_{\Kk}^*$, there exists $m\in\bbN$ such that $u\eta^{-m}\in N_{\LK}(L^*)$ and hence $u^{\frac{p^t-1}{q^n}}\equiv \eta^{\frac{m(p^t-1)}{q^n}}\ \mathrm{mod}\ \pi_{\Kk}$.

Then, by Lemma \ref{ramnorms}, $\sigma_u=\sigma_{\eta^m}=\sigma_{\eta}^m$.

\newtheorem{etapower}[ramgeneral element]{Lemma}
\begin{etapower}
\label{etapower}
For all $m\in\bbN$ and for all $\beta \in L^*$
$$\frac{\sigma_{\eta^m}(\beta)}{\beta}=\frac{\sigma_{\eta}^m(\beta)}{\beta}\equiv \frac{1}{\eta^{m(p^t-1)\mathfrak{v}_{\Kk}(\beta)}}\ \ \mathrm{mod}\ \pi_{\Ll}.$$

\end{etapower}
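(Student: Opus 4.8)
The plan is to induct on $m$, exactly as in the proof of Lemma \ref{pipower congruence} from the mixed case. The base case $m=1$ is precisely Lemma \ref{ramgeneral element}. Before starting, note that the exponent $m(p^t-1)\mathfrak{v}_{\Kk}(\beta)$ appearing on the right is a well-defined integer: by Proposition \ref{ethroot} the ramification index $e=q^n$ divides $|k^*|=p^t-1$, while $\mathfrak{v}_{\Kk}(\beta)\in\frac{1}{q^n}\bbZ$, so $(p^t-1)\mathfrak{v}_{\Kk}(\beta)\in\bbZ$. Also recall, as used in the argument preceding this lemma, that $\theta_{\LK}$ is a homomorphism, hence $\sigma_{\eta^m}=\sigma_\eta^m$; this identifies the two expressions on the left-hand side.

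For the inductive step, suppose the congruence holds for some $m\in\bbN$. Writing $\sigma_\eta^{m+1}=\sigma_\eta\circ\sigma_\eta^m$ and inserting $\sigma_\eta(\beta)$, we get
\[\frac{\sigma_\eta^{m+1}(\beta)}{\beta}=\frac{\sigma_\eta(\sigma_\eta^m(\beta))}{\sigma_\eta(\beta)}\cdot\frac{\sigma_\eta(\beta)}{\beta}=\sigma_\eta\!\left(\frac{\sigma_\eta^m(\beta)}{\beta}\right)\frac{\sigma_\eta(\beta)}{\beta}.\]
By the induction hypothesis, $\sigma_\eta^m(\beta)/\beta$ is a unit of $\cO_{\Ll}$ congruent to $\eta^{-m(p^t-1)\mathfrak{v}_{\Kk}(\beta)}$ modulo $\pi_{\Ll}$. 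Since $L/K$ is totally ramified, every element of $\mathrm{Gal}(L/K)$ fixes $\cO_{\Ll}^*$ modulo $\pi_{\Ll}$ (the observation recorded just before Lemma \ref{ramgeneral element}), so $\sigma_\eta(\sigma_\eta^m(\beta)/\beta)\equiv\sigma_\eta^m(\beta)/\beta\equiv\eta^{-m(p^t-1)\mathfrak{v}_{\Kk}(\beta)}\ \mathrm{mod}\ \pi_{\Ll}$; here we also use that $\eta\in K^*$ is fixed by $\sigma_\eta$. Combining this with $\sigma_\eta(\beta)/\beta\equiv\eta^{-(p^t-1)\mathfrak{v}_{\Kk}(\beta)}\ \mathrm{mod}\ \pi_{\Ll}$ from Lemma \ref{ramgeneral element} gives
\[\frac{\sigma_\eta^{m+1}(\beta)}{\beta}\equiv\frac{1}{\eta^{(m+1)(p^t-1)\mathfrak{v}_{\Kk}(\beta)}}\ \ \mathrm{mod}\ \pi_{\Ll},\]
which closes the induction.

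I expect no serious obstacle: the entire content is the cocycle identity $g_1g_2(\beta)/\beta=g_1(g_2(\beta)/\beta)\cdot g_1(\beta)/\beta$ combined with the fact that in a totally ramified extension the Galois action is trivial on residues, so applying $\sigma_\eta$ has no effect modulo $\pi_{\Ll}$. The only bookkeeping is the integrality of the exponents, which follows from $e\mid p^t-1$ as above. This is the direct analogue of Lemma \ref{pipower congruence}, and the argument runs entirely in parallel.
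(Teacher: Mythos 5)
Your proof is correct and matches the paper's intent: the paper disposes of this lemma with a one-line ``clear by induction on $m$,'' and the induction you supply, via the cocycle identity together with the observation (made just before Lemma \ref{ramgeneral element}) that elements of $\mathrm{Gal}(L/K)$ act trivially on $\cO_{\Ll}^*$ modulo $\pi_{\Ll}$ in a totally ramified extension, is exactly the argument being invoked. The preliminary remarks on integrality of the exponent and on $\sigma_{\eta^m}=\sigma_\eta^m$ via the homomorphism property of $\theta_{\LK}$ are correct and appropriate.
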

\begin{proof}
This is clear by induction on $m$.
\end{proof}

\newtheorem{ramunit}[ramgeneral element]{Corollary}
\begin{ramunit}
\label{ramunit}
For all $u\in\cO_{\Kk}^*$ and for all $\beta\in L^*$
$$\frac{\sigma_u(\beta)}{\beta}\equiv \frac{1}{u^{(p^t-1)\mathfrak{v}_{\Kk}(\beta)}}\ \ \mathrm{mod}\ \pi_{\Ll}.$$
\end{ramunit}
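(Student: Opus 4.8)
The plan is to deduce this directly from Lemma \ref{etapower}, together with Lemma \ref{ramnorms} and the observation made just before Lemma \ref{etapower} that $\eta$ generates $K^*/N_{\LK}(L^*)$. Fix $u\in\cO_{\Kk}^*$ and $\beta\in L^*$. Since $K^*/N_{\LK}(L^*)$ is cyclic of order $q^n$ and is generated by the class of $\eta$, there is an integer $m\in\bbN$ with $w:=u\eta^{-m}\in N_{\LK}(L^*)\cap\cO_{\Kk}^*$, and then $\sigma_u=\sigma_{\eta^m}=\sigma_\eta^m$. Applying Lemma \ref{etapower} gives
\[
\frac{\sigma_u(\beta)}{\beta}=\frac{\sigma_\eta^m(\beta)}{\beta}\equiv\frac{1}{\eta^{m(p^t-1)\mathfrak{v}_{\Kk}(\beta)}}\ \ \mathrm{mod}\ \pi_{\Ll},
\]
so it suffices to show $\eta^{m(p^t-1)\mathfrak{v}_{\Kk}(\beta)}\equiv u^{(p^t-1)\mathfrak{v}_{\Kk}(\beta)}\ \mathrm{mod}\ \pi_{\Ll}$, which in view of $u=w\eta^m$ is equivalent to $w^{(p^t-1)\mathfrak{v}_{\Kk}(\beta)}\equiv 1\ \mathrm{mod}\ \pi_{\Ll}$.

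To establish this I would first record the bookkeeping with the fractional valuation: since $L/K$ is totally ramified of degree $q^n$ we have $\mathfrak{v}_{\Kk}(\beta)\in\tfrac{1}{q^n}\bbZ$, and since $q^n=e$ divides $p^t-1$ by Proposition \ref{ethroot}, the exponent $(p^t-1)\mathfrak{v}_{\Kk}(\beta)=\tfrac{p^t-1}{q^n}\cdot\bigl(q^n\mathfrak{v}_{\Kk}(\beta)\bigr)$ is an integer multiple of $\tfrac{p^t-1}{q^n}$; in particular all the powers of units of $\cO_{\Kk}$ written above genuinely lie in $K^*$. Now $w$ is a unit lying in $N_{\LK}(L^*)$, so Lemma \ref{ramnorms} yields $w^{\frac{p^t-1}{q^n}}\equiv 1\ \mathrm{mod}\ \pi_{\Kk}$. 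Because $\pi_{\Kk}\in\pi_{\Ll}^{q^n}\cO_{\Ll}\subset\pi_{\Ll}\cO_{\Ll}$, this congruence holds a fortiori modulo $\pi_{\Ll}$, and raising it to the integer power $q^n\mathfrak{v}_{\Kk}(\beta)$ gives $w^{(p^t-1)\mathfrak{v}_{\Kk}(\beta)}\equiv 1\ \mathrm{mod}\ \pi_{\Ll}$. Substituting back into the displayed congruence completes the argument.

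The computation is essentially routine; the only point that needs care is the handling of the fractional valuation $\mathfrak{v}_{\Kk}(\beta)$ — one must check that $(p^t-1)\mathfrak{v}_{\Kk}(\beta)$ is always an integer, so that the manipulations with powers of units of $\cO_{\Kk}$ make sense, and that the norm congruence supplied by Lemma \ref{ramnorms}, which a priori only holds modulo $\pi_{\Kk}$, can be transferred to a congruence modulo $\pi_{\Ll}$. Both facts follow from $q^n\mid p^t-1$ and $\pi_{\Kk}\in\pi_{\Ll}^{q^n}\cO_{\Ll}$, so I do not anticipate any real difficulty.
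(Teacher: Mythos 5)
Your argument is correct and follows the same route the paper intends: reduce to $\sigma_u=\sigma_\eta^m$ via the generator $\eta$ of $K^*/N_{\LK}(L^*)$, apply Lemma \ref{etapower}, and then close the gap between $\eta^{m(p^t-1)\mathfrak{v}_{\Kk}(\beta)}$ and $u^{(p^t-1)\mathfrak{v}_{\Kk}(\beta)}$ using the norm criterion of Lemma \ref{ramnorms}. The paper condenses this to ``follows immediately from Lemma \ref{etapower} and the preceding discussion''; you have simply made explicit the two small points that the paper leaves implicit, namely that $(p^t-1)\mathfrak{v}_{\Kk}(\beta)\in\bbZ$ because $q^n=e$ divides $p^t-1$, and that the congruence modulo $\pi_{\Kk}$ from Lemma \ref{ramnorms} implies one modulo $\pi_{\Ll}$ because $\pi_{\Kk}\cO_{\Ll}=\pi_{\Ll}^{q^n}\cO_{\Ll}\subset\pi_{\Ll}\cO_{\Ll}$.
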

\begin{proof}
This follows immediately from Lemma \ref{etapower} and the preceding discussion.
\end{proof}
Since $\cO_{\Ll}=\cO_{\Kk}[\delta]$, for any $w\in\cO_{\Ll}^*$ we have
\begin{equation}
\label{unitsdie}
w^{p^t-1}\equiv 1\ \mathrm{mod}\ \pi_{\Ll}.
\end{equation}
Also, $\mathfrak{v}_{\Kk}(\delta)=\frac{1}{q^{n}}$ and $\delta^{q^n}=\pi_{\Kk}$ so $$\frac{\delta^{(p^t-1)}}{\pi_{\Kk}^{(p^t-1)\mathfrak{v}_{\Kk}(\delta)}}=1.$$
Writing any $\beta\in L^*$ as
$$\beta = w\delta^{\mathfrak{v}_{\Ll}(\beta)}=w\delta^{q^n\mathfrak{v}_{\Kk}(\beta)}$$
for $w\in\cO_{\Ll}^*$ and applying congruence (\ref{unitsdie}), we see that in fact
$$\frac{\beta^{(p^t-1)}}{\pi_{\Kk}^{(p^t-1)\mathfrak{v}_{\Kk}(\beta)}}=1$$
for all $\beta\in L^*$.

Observe that $N_{\LK}(\delta)=(-1)^{(e-1)}\pi_{\Kk}$ and hence $\sigma_{(-1)^{(e-1)}\pi_{\Kk}}$ is trivial.

Together, the facts listed above give the following result:
\newtheorem{rampipower}[ramgeneral element]{Lemma}
\begin{rampipower}
\label{rampipower}
For all $i\in\bbN$ and for all $\beta\in L^*$
$$\frac{\sigma_{\left(N_{\LK}(\delta)\right)^i}(\beta)}{\beta}\equiv 1 \equiv \frac{\beta^{(p^{ti}-1)}}{((-1)^{(e-1)}N_{\LK}(\delta))^{(p^{ti}-1)\mathfrak{v}_{\Kk}(\beta)}}\ \ \mathrm{mod}\ \pi_{\Ll}$$
where $N_{\LK}(\delta)=(-1)^{(e-1)}\pi_{\Kk}$ is our chosen uniformiser of $K$.
\end{rampipower}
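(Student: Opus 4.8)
The plan is to prove Lemma \ref{rampipower} by reducing everything to the facts already assembled just before the statement, with no new arithmetic beyond a simple induction. Throughout, write $\pi_{\Kk}=N_{\LK}(\delta)=(-1)^{(e-1)}\pi_{\Kk}$ — that is, recall that our chosen uniformiser of $K$ is exactly $N_{\LK}(\delta)$, so the denominator $((-1)^{(e-1)}N_{\LK}(\delta))^{(p^{ti}-1)\mathfrak{v}_{\Kk}(\beta)}$ on the right-hand side is literally $\pi_{\Kk}^{(p^{ti}-1)\mathfrak{v}_{\Kk}(\beta)}$.

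First I would establish the right-hand congruence: for every $\beta\in L^*$ and every $j\in\bbN$,
\[
\frac{\beta^{(p^{tj}-1)}}{\pi_{\Kk}^{(p^{tj}-1)\mathfrak{v}_{\Kk}(\beta)}}\equiv 1\ \ \mathrm{mod}\ \pi_{\Ll}.
\]
This follows from the displayed identity proved just above the Lemma, namely $\beta^{(p^t-1)}\pi_{\Kk}^{-(p^t-1)\mathfrak{v}_{\Kk}(\beta)}=1$ for all $\beta\in L^*$, simply by raising both sides to the power $M_j=1+p^t+\cdots+(p^t)^{j-1}$ and noting $M_j(p^t-1)=p^{tj}-1$ while $M_j(p^t-1)\mathfrak{v}_{\Kk}(\beta)=(p^{tj}-1)\mathfrak{v}_{\Kk}(\beta)\in\bbZ$. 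Hence the right two quantities in the statement are both $\equiv 1\ \mathrm{mod}\ \pi_{\Ll}$.

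Next I would establish the left-hand congruence $\sigma_{(N_{\LK}(\delta))^i}(\beta)\beta^{-1}\equiv 1\ \mathrm{mod}\ \pi_{\Ll}$. The key input is the observation, recorded just before the Lemma, that $N_{\LK}(\delta)=(-1)^{(e-1)}\pi_{\Kk}$ is a norm from $L^*$, so $\sigma_{N_{\LK}(\delta)}$ is the trivial element of $\mathrm{Gal}(L/K)$ (it lies in $N_{\LK}(L^*)$, which is the kernel of $\theta_{\LK}$). Therefore $\sigma_{(N_{\LK}(\delta))^i}=\sigma_{N_{\LK}(\delta)}^{\,i}=\mathrm{id}$ for every $i\in\bbN$, and $\sigma_{(N_{\LK}(\delta))^i}(\beta)=\beta$ exactly, a fortiori $\equiv\beta\ \mathrm{mod}\ \pi_{\Ll}$. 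Combining the two halves gives the chain of congruences in the statement.

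There is essentially no obstacle here: the only mild point to be careful about is that $(p^{tj}-1)\mathfrak{v}_{\Kk}(\beta)$ is a genuine integer so that the power makes sense, but since $q^n\mid p^t-1$ (Proposition \ref{ethroot}) and $\mathfrak{v}_{\Kk}(\beta)\in\frac{1}{q^n}\bbZ$ this is automatic, exactly as in the proof of Lemma \ref{unit congruence}. So the proof is a short assembly of (i) the displayed identity preceding the Lemma, raised to a suitable power, and (ii) the triviality of $\sigma_{N_{\LK}(\delta)}$.

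\begin{proof}
Write $\pi_{\Kk}=N_{\LK}(\delta)=(-1)^{(e-1)}\pi_{\Kk}$ for our chosen uniformiser of $K$; then the denominator on the right-hand side of the claimed congruences equals $\pi_{\Kk}^{(p^{ti}-1)\mathfrak{v}_{\Kk}(\beta)}$.

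We first treat the right-hand congruence. It was shown, just before the statement of the Lemma, that
$$\frac{\beta^{(p^t-1)}}{\pi_{\Kk}^{(p^t-1)\mathfrak{v}_{\Kk}(\beta)}}=1$$
for all $\beta\in L^*$. Fix $i\in\bbN$ and set $M=1+p^t+\cdots+(p^t)^{i-1}$, so that $M(p^t-1)=p^{ti}-1$. Since $q^n$ divides $p^t-1$ (Proposition \ref{ethroot}) and $\mathfrak{v}_{\Kk}(\beta)\in\frac{1}{q^n}\bbZ$, the exponent $(p^{ti}-1)\mathfrak{v}_{\Kk}(\beta)=M(p^t-1)\mathfrak{v}_{\Kk}(\beta)$ is an integer. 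Raising the displayed identity to the power $M$ gives
$$\frac{\beta^{(p^{ti}-1)}}{\pi_{\Kk}^{(p^{ti}-1)\mathfrak{v}_{\Kk}(\beta)}}=\left(\frac{\beta^{(p^t-1)}}{\pi_{\Kk}^{(p^t-1)\mathfrak{v}_{\Kk}(\beta)}}\right)^{M}=1,$$
so in particular $\beta^{(p^{ti}-1)}\pi_{\Kk}^{-(p^{ti}-1)\mathfrak{v}_{\Kk}(\beta)}\equiv 1\ \mathrm{mod}\ \pi_{\Ll}$.

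We now treat the left-hand congruence. As observed just before the statement, $N_{\LK}(\delta)=(-1)^{(e-1)}\pi_{\Kk}$ lies in $N_{\LK}(L^*)$, which is the kernel of the local reciprocity map $\theta_{\LK}$; hence $\sigma_{N_{\LK}(\delta)}$ is the identity element of $\mathrm{Gal}(L/K)$. Therefore, for every $i\in\bbN$,
$$\sigma_{\left(N_{\LK}(\delta)\right)^i}=\sigma_{N_{\LK}(\delta)}^{\,i}=\mathrm{id},$$
so $\sigma_{\left(N_{\LK}(\delta)\right)^i}(\beta)=\beta$ for all $\beta\in L^*$, and a fortiori
$$\frac{\sigma_{\left(N_{\LK}(\delta)\right)^i}(\beta)}{\beta}=1\equiv 1\ \ \mathrm{mod}\ \pi_{\Ll}.$$
Combining the two displays yields
$$\frac{\sigma_{\left(N_{\LK}(\delta)\right)^i}(\beta)}{\beta}\equiv 1 \equiv \frac{\beta^{(p^{ti}-1)}}{((-1)^{(e-1)}N_{\LK}(\delta))^{(p^{ti}-1)\mathfrak{v}_{\Kk}(\beta)}}\ \ \mathrm{mod}\ \pi_{\Ll},$$
as claimed.
\end{proof}
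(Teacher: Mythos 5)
Your proof is correct and matches the paper's intent: the paper boxes this Lemma without a written argument, as an immediate consequence of the preceding observations, and you have correctly assembled exactly those facts — the norm identity $N_{\LK}(\delta)=(-1)^{(e-1)}\pi_{\Kk}\in N_{\LK}(L^*)$ kills the left-hand side, and raising the displayed relation $\beta^{(p^t-1)}\pi_{\Kk}^{-(p^t-1)\mathfrak{v}_{\Kk}(\beta)}\equiv 1$ to the power $1+p^t+\cdots+p^{t(i-1)}$ handles the right. One small caution: your opening line ``$\pi_{\Kk}=N_{\LK}(\delta)=(-1)^{(e-1)}\pi_{\Kk}$'' reads as asserting $\pi_{\Kk}=(-1)^{(e-1)}\pi_{\Kk}$, which is false when $e$ is even; what you actually need (and then correctly use) is that $(-1)^{(e-1)}N_{\LK}(\delta)=(-1)^{2(e-1)}\pi_{\Kk}=\pi_{\Kk}$, the \emph{original} uniformiser with $\delta^{q^n}=\pi_{\Kk}$, so the displayed identity you cite applies verbatim.
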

\begin{flushright}
$\Box$
\end{flushright}
Now if $\pi_{\Kk}'$ is any uniformiser of $K$, we can write $\pi_{\Kk}'=uN_{\LK}(\delta)$ for some $u\in\cO_{\Kk}^*$. Thus for all $i\in\bbN$ and for all $\beta\in L^*$ we have
\begin{eqnarray*}
\frac{\sigma_{\pi_{\Kk}'^i}(\beta)}{\beta} & = & \frac{\sigma_{u^i\left(N_{\LK}(\delta)\right)^i}(\beta)}{\beta}\\ & =
& \sigma_{u^i}\left(\frac{\sigma_{\left(N_{\LK}(\delta)\right)^i}(\beta)}{\beta}\right)\frac{\sigma_{u^i}(\beta)}{\beta}\\
 & \equiv & \frac{\sigma_{u^i}(\beta)}{\beta} \\
& \equiv & \frac{1}{u^{i(p^t-1)\mathfrak{v}_{\Kk}(\beta)}}\\
 & \equiv & \frac{1}{u^{(p^{ti}-1)\mathfrak{v}_{\Kk}(\beta)}}\ \ \mathrm{mod}\ \pi_{\Ll}
 \end{eqnarray*}
where the final congruence holds because $u\in\cO_{\Kk}^*$ so $u^{p^t}\equiv u\ \mathrm{mod}\ \pi_{\Kk}$ and therefore $u^{\frac{p^{ti}-1}{p^t-1}}\equiv u^i\ \mathrm{mod}\ \pi_{\Kk}$.

Now we write $\beta=\delta^{\mathfrak{v}_{\Ll}(\beta)}w=\delta^{e\mathfrak{v}_{\Kk}(\beta)}w$ for some $w\in\cO_{\Ll}^*$ and apply congruence (\ref{unitsdie}) to see that
\begin{eqnarray*}
\frac{\beta^{(p^{ti}-1)}}{\left((-1)^{(e-1)}\pi_{\Kk}'\right)^{(p^{ti}-1)\mathfrak{v}_{\Kk}(\beta)}} & = & \frac{\delta^{q^n (p^{ti}-1)\mathfrak{v}_{\Kk}(\beta)}w^{(p^{ti}-1)}}{\left((-1)^{(e-1)}\pi_{\Kk}'\right)^{(p^{ti}-1)\mathfrak{v}_{\Kk}(\beta)}}\\
 & \equiv & \left(\frac{N_{\LK}(\delta)}{\pi_{\Kk}'}\right)^{(p^{ti}-1)\mathfrak{v}_{\Kk}(\beta)}\\
 & \equiv & \frac{1}{u^{(p^{ti}-1)\mathfrak{v}_{\Kk}(\beta)}}\ \  \mathrm{mod}\ \pi_{\Ll}
\end{eqnarray*}
Combining this with Corollary \ref{ramunit} and Lemma \ref{rampipower} we see the following:

\newtheorem{ramgeneral}[ramgeneral element]{Corollary}
\begin{ramgeneral}
\label{ramgeneral}
Let $\pi_{\Kk}$ be \textbf{any} uniformiser of $K$. Then for all $i\in\bbN$, for all $u\in\cO_{\Kk}^*$ and for all $\beta\in L^*$

$$\frac{\sigma_{u\pi_{\Kk}^i}(\beta)}{\beta}\equiv \frac{\beta^{(p^{ti}-1)}}{((-1)^{(e-1)}\pi_{\Kk})^{(p^{ti}-1)\mathfrak{v}_{\Kk}(\beta)}u^{(p^t-1)\mathfrak{v}_{\Kk}(\beta)}}\ \ \mathrm{mod}\ \pi_{\Ll}.$$
\end{ramgeneral}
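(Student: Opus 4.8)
The plan is to run the argument of Proposition~\ref{general congruence} verbatim, the two inputs now being Corollary~\ref{ramunit} (which disposes of the unit $u$) and the congruence
$$\frac{\sigma_{\pi_{\Kk}^i}(\beta)}{\beta}\equiv \frac{\beta^{(p^{ti}-1)}}{((-1)^{(e-1)}\pi_{\Kk})^{(p^{ti}-1)\mathfrak{v}_{\Kk}(\beta)}}\ \ \mathrm{mod}\ \pi_{\Ll},$$
valid for \emph{any} uniformiser $\pi_{\Kk}$ of $K$, which is exactly what the two displayed computations immediately preceding the statement yield once combined (together with Lemma~\ref{rampipower}). It is worth recording at the outset that, since $e=q^n$ divides $p^t-1$ by Proposition~\ref{ethroot} and hence also divides $p^{ti}-1$, the quantities $(p^t-1)\mathfrak{v}_{\Kk}(\beta)$ and $(p^{ti}-1)\mathfrak{v}_{\Kk}(\beta)$ are honest integers for every $\beta\in L^*$, so all the fractional powers written below make sense.

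First I would use that $\theta_{\LK}$ is a group homomorphism to factor $\sigma_{u\pi_{\Kk}^i}=\sigma_u\circ\sigma_{\pi_{\Kk}^i}$, whence
$$\frac{\sigma_{u\pi_{\Kk}^i}(\beta)}{\beta}=\sigma_u\!\left(\frac{\sigma_{\pi_{\Kk}^i}(\beta)}{\beta}\right)\frac{\sigma_u(\beta)}{\beta}.$$
Into the first factor I substitute the $\pi_{\Kk}^i$-congruence above; since $\sigma_u\in\mathrm{Gal}(L/K)$ fixes $K$ pointwise (in particular fixes $\pi_{\Kk}$ and $(-1)^{e-1}$) and sends the maximal ideal $\pi_{\Ll}\cO_{\Ll}$ to itself, applying $\sigma_u$ amounts simply to replacing $\beta$ by $\sigma_u(\beta)$ inside that factor. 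Then I feed in Corollary~\ref{ramunit} in the shape $\sigma_u(\beta)\equiv \beta\,u^{-(p^t-1)\mathfrak{v}_{\Kk}(\beta)}\ \mathrm{mod}\ \pi_{\Ll}$, raise to the exponent dictated by the previous step, and also use it once more for the remaining factor $\sigma_u(\beta)/\beta$. Collecting powers of $u$ gives $(p^{ti}-1)(p^t-1)+(p^t-1)=(p^t-1)p^{ti}$, so at this stage one has the right-hand side of the claimed congruence save that the exponent of $u$ reads $(p^t-1)p^{ti}\mathfrak{v}_{\Kk}(\beta)$ rather than $(p^t-1)\mathfrak{v}_{\Kk}(\beta)$.

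There is no genuine obstacle here — the work is the same bookkeeping already carried out, in the same order, in the proof of Proposition~\ref{general congruence} — and the one point deserving a word of care is the final simplification: the image of $u\in\cO_{\Kk}^*$ in the residue field lies in $k\subset\ell$, so $u^{p^t}\equiv u\ \mathrm{mod}\ \pi_{\Kk}$, hence $u^{p^{ti}}\equiv u\ \mathrm{mod}\ \pi_{\Kk}$ by iteration, and therefore $u^{(p^t-1)p^{ti}\mathfrak{v}_{\Kk}(\beta)}=\bigl(u^{p^{ti}}\bigr)^{(p^t-1)\mathfrak{v}_{\Kk}(\beta)}\equiv u^{(p^t-1)\mathfrak{v}_{\Kk}(\beta)}\ \mathrm{mod}\ \pi_{\Kk}$, a fortiori modulo $\pi_{\Ll}$. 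This turns the intermediate expression into the asserted formula.
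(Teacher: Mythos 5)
Your proposal is correct and matches what the paper intends: the paper states Corollary~\ref{ramgeneral} with only the cue ``Combining this with Corollary~\ref{ramunit} and Lemma~\ref{rampipower}'', and the combination it has in mind is exactly the Proposition~\ref{general congruence} argument you run — factor $\sigma_{u\pi_{\Kk}^i}=\sigma_u\circ\sigma_{\pi_{\Kk}^i}$, substitute the $\pi_{\Kk}^i$-congruence (already established for an arbitrary uniformiser by the two preceding displays), feed in Corollary~\ref{ramunit}, and clean up the exponent of $u$ via $u^{p^{ti}}\equiv u \bmod \pi_{\Kk}$. The only cosmetic caveat is that Corollary~\ref{ramunit} is a congruence between the units $\sigma_u(\beta)/\beta$ and $u^{-(p^t-1)\mathfrak{v}_{\Kk}(\beta)}$ in $\cO_{\Ll}$, so when you write it ``in the shape $\sigma_u(\beta)\equiv\beta\,u^{-(p^t-1)\mathfrak{v}_{\Kk}(\beta)}$'' you should keep in mind that $\beta$ may not lie in $\cO_{\Ll}$; working with the ratio throughout, as in the paper's proof of Proposition~\ref{general congruence}, avoids this infelicity without changing the computation.
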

\begin{flushright}
$\Box$
\end{flushright}
So we have seen that congruences (\ref{eq:general congruence}) of Proposition \ref{general congruence} still hold for a totally tamely ramified cyclic extension $L/K$ of degree $q^n$.

In \mbox{Section \ref{unram case}} we will demonstrate that these same congruences also hold for $L/K$ unramified of degree $q^n$. In each case this determines the local reciprocity map completely, for reasons explained in Remark \ref{suffice}.

\section{The unramified case}
\label{unram case}
As usual, let $K$ be a local field 
with residue field $k$ of order $p^t$. Let $L/K$ be finite and unramified. Then all units of $K$ are norms of units of $L$.\footnote{See, for example, \cite{Serre}, Chapter V, Corollary to Proposition 3.} Thus $K^*/N_{\LK}(L^*)$ is generated by the class of any uniformiser $\pi_{\Kk}$ of $K$. The local reciprocity map, $\theta_{\LK}$, is determined by
$$\theta_{\LK}:\pi_{\Kk}\mapsto \mathrm{Frob}_{\LK}.$$
Observe that in this case the congruence
$$\frac{\sigma_{u\pi_{\Kk}^i}(\beta)}{\beta}\equiv \frac{\beta^{(p^{ti}-1)}}{\pi_{\Kk}^{(p^{ti}-1)\mathfrak{v}_{\Kk}(\beta)}u^{(p^t-1)\mathfrak{v}_{\Kk}(\beta)}}\ \ \mathrm{mod}\ \pi_{\Ll}$$
holds for all $i\in\bbN$, for all $u\in\cO_{\Kk}^*$ and for all $\beta\in L^*$.

This is because $u^{p^t-1}\equiv 1\ \mathrm{mod}\ \pi_{\Kk}$ for all $u\in\cO_{\Kk}^*$ and $\mathfrak{v}_{\Kk}(\beta)\in\bbZ$ for all $\beta\in L^*$ since $L/K$ is unramified. For any $u\in\cO_{\Kk}^*$, $\sigma_u$ is trivial because $u$ is a norm. Writing $\beta\in L^*$ as $\pi_{\Kk}^{\mathfrak{v}_{\Kk}(\beta)}w$ for some $w\in\cO_{\Ll}^*$, the result follows.

Therefore, if $L/K$ is unramified, congruence (\ref{eq:general congruence}) of Proposition \ref{general congruence} holds regardless of whether $[L:K]$ is coprime to $p$ (or indeed whether $[L:K]$ is a prime power). We will make use of this fact in Section \ref{abelian}.

Thus we have proved the following more general result:

\theoremstyle{plain}\newtheorem{generalthm2}{Theorem}[section]
\begin{generalthm2}
\label{generalthm2}
Let $K$ be a local field with residue field $k$ of order $p^t$ and let $L/K$ be a tamely ramified cyclic extension of degree $q^n$ with $q$ a prime distinct from $p$. Let $e=e_{\LK}$ be the ramification degree of $L/K$. Fix a uniformiser, $\pi_{\Kk}$, of $K$. Then the local reciprocity map $\theta_{\LK}$ satisfies
$$\frac{\theta_{\LK}\left(u\pi_{\Kk}^i\right)(\beta)}{\beta}\equiv \frac{\beta^{(p^{ti}-1)}}{((-1)^{e-1}\pi_{\Kk})^{(p^{ti}-1)\mathfrak{v}_{\Kk}(\beta)}u^{(p^t-1)\mathfrak{v}_{\Kk}(\beta)}}\ \ \mathrm{mod}\ \pi_{\Ll}$$
for all $i\in\bbN$, for all $u\in\cO_{\Kk}^*$ and for all $\beta\in L^*$.
\end{generalthm2}
\begin{flushright}
$\Box$
\end{flushright}
We saw in Remark \ref{suffice} that these congruences for all $0\neq\beta\in\cO_{\Ll}$ are enough to uniquely determine an element of $\mathrm{Gal}(L/K)$. Since the extension $L/K$ is cyclic, it suffices to apply the formula above to compute the image under $\theta_{\LK}$ of a generator of $K^*/{N_{\LK}(L^*)}$.

\section{Tame abelian extensions}
\label{abelian}
Let $L/K$ be an abelian tamely ramified extension of local fields. Write $K_{nr}$ for the maximal unramified extension of $K$ inside $L$. Let the residue field of $K$, denoted $k$, have $|k|=p^t$. In particular, we know from Corollary \ref{G1} that the characteristic of the residue field, $p$, is coprime to the ramification degree $e_{\LK}$.
We decompose the finite abelian group $G=\mathrm{Gal}(L/K)$ into a direct product of cyclic groups of prime-power order:
$$G=\mathrm{Gal}(L/K)\cong C_{q_1^{n_1}}\times C_{q_2^{n_2}}\times \ldots \times C_{q_r^{n_r}}$$ where the $q_j$ are (not necessarily distinct) primes.

Thus we can write any $\sigma\in G$ uniquely as $\sigma=\sigma_1\circ \sigma_2\circ\dots\circ \sigma_{r}$ where $\sigma_j\in C_{q_j^{n_j}}$.

Write $L_j=L^{C_{q_j^{n_j}}}$ for the subfield of $L$ which is fixed by $C_{q_j^{n_j}}$ and recall from Proposition \ref{diagram} that the following diagram commutes.
\begin{eqnarray}
\xymatrix{
L_j^*\ar[rr]^{\theta_{\LLj}}\ar[d]_{N_{\LjK}} && C_{q_j^{n_j}}\ar@{^{(}->}[d]\\
K^*\ar[rr]^{\theta_{\LK}} && \mathrm{Gal}(L/K)
}
\end{eqnarray}
Fix $b\in K^*$ and let $\sigma=\sigma_b$ be the image of $b$ under the local reciprocity map $\theta_{\LK}$.
Decompose $\sigma=\sigma_1\circ\dots\circ\sigma_r$ where $\sigma_j\in C_{q_j^{n_j}}$.

Note that we allow the $q_j$ to equal $p$ but if this is the case we must have $e_{\LLj}=1$, since $p$ does not divide $e_{\LK}=e_{\LLj}e_{\LjK}$. In other words, the extensions $L/L_j$ with Galois group $C_{q_j^{n_j}}=C_{p^{n_j}}$ are all unramified.
Therefore, as stated in Section \ref{unram case}, the congruences of Theorem \ref{generalthm2} hold for these $L/L_j$ with Galois group $C_{p^{n_j}}$. Furthermore, we can apply \mbox{Theorem \ref{generalthm2}} directly to the extensions $L/L_i$ with Galois group $C_{q_i^{n_i}}$ where $q_i\neq p$.

Our aim is to combine the congruences of Theorem \ref{generalthm2} for all the extensions $L/L_j$ and obtain congruences which hold for $L/K$.

\newtheorem{abelianthm}{Theorem}[section]
\begin{abelianthm}
\label{abelianthm}
Let $K$ be a local field with residue field $k$ of order $p^t$ and let $L/K$ be a tamely ramified finite abelian extension. Let $e=e_{\LK}$ be the ramification degree of $L/K$. Fix a uniformiser, $\pi_{\Kk}$, of $K$. Then the local reciprocity map $\theta_{\LK}$ satisfies
$$\frac{\theta_{\LK}\left(u\pi_{\Kk}^i\right)(\beta)}{\beta}\equiv \frac{\beta^{(p^{ti}-1)}}{((-1)^{e-1}\pi_{\Kk})^{(p^{ti}-1)\mathfrak{v}_{\Kk}(\beta)}u^{(p^t-1)\mathfrak{v}_{\Kk}(\beta)}}\ \ \mathrm{mod}\ \pi_{\Ll}$$
for all $i\in\bbN$, for all $u\in\cO_{\Kk}^*$ and for all $\beta\in L^*$.
\end{abelianthm}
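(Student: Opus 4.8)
The plan is to descend to the cyclic prime-power case of Theorem \ref{generalthm2} by means of the fixed decomposition $G=\mathrm{Gal}(L/K)\cong C_{q_1^{n_1}}\times\cdots\times C_{q_r^{n_r}}$ together with the norm-compatibility of the reciprocity map (Proposition \ref{diagram}). Put $L_j=L^{C_{q_j^{n_j}}}$, so that $\mathrm{Gal}(L/L_j)=C_{q_j^{n_j}}$ and $L/L_j$ is tame (its first ramification group is $G_1\cap\mathrm{Gal}(L/L_j)=\{1\}$); as noted in Section \ref{abelian}, if $q_j=p$ then $L/L_j$ is in fact unramified, so Theorem \ref{generalthm2} (in its unramified sub-case for such $j$) applies to every $L/L_j$. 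Note that Theorem \ref{generalthm2} for $L/L_j$ gives a congruence modulo a uniformiser of the \emph{top} field $L$, so all the congruences below live modulo the same $\pi_{\Ll}$.

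Fix $b=u\pi_{\Kk}^i$ and write $\sigma:=\theta_{\LK}(b)=\sigma_1\circ\cdots\circ\sigma_r$ with $\sigma_j\in C_{q_j^{n_j}}=\mathrm{Gal}(L/L_j)$. Since $\theta_{\LLj}\colon L_j^*/N_{\LLj}(L^*)\to\mathrm{Gal}(L/L_j)$ is an isomorphism, I may choose $b_j\in L_j^*$ with $\theta_{\LLj}(b_j)=\sigma_j$; by Proposition \ref{diagram} this reads $\theta_{\LK}(N_{\LjK}(b_j))=\sigma_j$ inside $G$, whence $\theta_{\LK}\big(\prod_j N_{\LjK}(b_j)\big)=\sigma=\theta_{\LK}(b)$, and, adjusting one $b_j$ by an element of $N_{\LLj}(L^*)$ and using transitivity of the norm, I may arrange $\prod_j N_{\LjK}(b_j)=b$ exactly. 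Writing $b_j=u_j\pi_{L_j}^{i_j}$ for a uniformiser $\pi_{L_j}$ of $L_j$ and taking $\mathfrak{v}_{\Kk}$ of this equality yields $\sum_j f_{\LjK}\,i_j=i$, while its unit part records $u$ as an explicit product of the $N_{\LjK}(u_j)$ and of the unit parts of the $N_{\LjK}(\pi_{L_j})$.

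Now telescope. With $\gamma_j:=(\sigma_{j+1}\circ\cdots\circ\sigma_r)(\beta)$ (so $\gamma_r=\beta$, $\gamma_0=\sigma(\beta)$) one has $\tfrac{\sigma(\beta)}{\beta}=\prod_{j=1}^r\tfrac{\sigma_j(\gamma_j)}{\gamma_j}$, and Theorem \ref{generalthm2} for $L/L_j$ gives, with $M_j:=p^{tf_{\LjK}i_j}$ and $C_j:=\big((-1)^{e_{\LLj}-1}\pi_{L_j}\big)^{(M_j-1)\mathfrak{v}_{L_j}(\beta)}u_j^{(p^{tf_{\LjK}}-1)\mathfrak{v}_{L_j}(\beta)}$,
$$\frac{\sigma_j(\gamma_j)}{\gamma_j}\equiv\frac{\gamma_j^{\,M_j-1}}{C_j}\pmod{\pi_{\Ll}},$$
where $\mathfrak{v}_{L_j}(\gamma_j)=\mathfrak{v}_{L_j}(\beta)$ since Galois automorphisms preserve valuations. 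The crucial observation is that the right-hand side depends on $\gamma_j$ only through $\gamma_j^{\,M_j-1}$, so the recursion $\gamma_{j-1}=\sigma_j(\gamma_j)\equiv\gamma_j^{M_j}/C_j$ closes up and iterates to
$$\frac{\sigma_b(\beta)}{\beta}\equiv\frac{\beta^{\,M-1}}{\prod_{j=1}^r C_j^{\,M_1\cdots M_{j-1}}}\pmod{\pi_{\Ll}},\qquad M:=M_1\cdots M_r=p^{t\sum_j f_{\LjK}i_j}=p^{ti},$$
the empty product in the exponent for $j=1$ being $1$ and the last equality coming from $\sum_j f_{\LjK}i_j=i$. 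It remains to identify $\prod_{j=1}^r C_j^{\,M_1\cdots M_{j-1}}$ with $\big((-1)^{e-1}\pi_{\Kk}\big)^{(p^{ti}-1)\mathfrak{v}_{\Kk}(\beta)}u^{(p^t-1)\mathfrak{v}_{\Kk}(\beta)}$ modulo $\pi_{\Ll}$.

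This last identification is the main obstacle, though it is bookkeeping rather than a new idea. The tools are: $e_{\LLj}e_{\LjK}=e$ and $\mathfrak{v}_{L_j}=e_{\LjK}\mathfrak{v}_{\Kk}$ on $L$, which convert the $L_j$-normalised valuations and the signs $(-1)^{e_{\LLj}-1}$ to the data of $L/K$; the relation $\pi_{\Ll}^{e}=(\text{unit})\,\pi_{\Kk}$ and the factorisation of $u$ obtained above, which reassemble the $\pi_{\Kk}$- and $u$-contributions; the fact that each $M_1\cdots M_{j-1}$ is a power of $p^t$, hence $\equiv1\pmod{p^t-1}$, so it fixes the residues of units lying in $k^*$—in particular the $e$th roots of unity, which lie in $k$ by Proposition \ref{ethroot}; and the congruences $w^{p^{tf_{\LjK}}-1}\equiv1\pmod{\pi_{\Ll}}$ for $w\in\cO_{L_j}^*$ and $w^{p^{ti}-1}\equiv1\pmod{\pi_{\Ll}}$ for $w\in k^*$, which discard the spurious unit factors, exactly as in the manipulations of Sections \ref{mixedcase} and \ref{totramcase}. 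Once the two sides are matched the theorem follows; Remark \ref{suffice} moreover confirms that these congruences for all $0\neq\beta\in\cO_{\Ll}$ determine $\theta_{\LK}(b)$ uniquely.
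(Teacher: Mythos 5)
Your approach is structurally the same as the paper's: decompose $G$ into cyclic prime-power factors, use the norm-compatibility $\theta_{\LLj}(b_j)=\theta_{\LK}(N_{\LjK}(b_j))$ to write $b=\prod_j N_{\LjK}(b_j)$, apply Theorem \ref{generalthm2} to each $L/L_j$ (the $q_j=p$ factors being unramified), and combine. The paper does the combination by induction on the number of cyclic factors; you do a direct telescoping product over all $j$ at once. These are equivalent, and your reduction to the single identification
$$\prod_{j=1}^r C_j^{\,M_1\cdots M_{j-1}}\equiv\bigl((-1)^{e-1}\pi_{\Kk}\bigr)^{(p^{ti}-1)\mathfrak{v}_{\Kk}(\beta)}u^{(p^t-1)\mathfrak{v}_{\Kk}(\beta)}\pmod{\pi_{\Ll}}$$
is exactly the content of the paper's inductive step after unwinding.

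However, you leave that identification unproven, calling it ``bookkeeping,'' and the toolkit you list is not sufficient to carry it out. The $C_j$'s involve $u_j\in\cO_{L_j}^*$ and $\pi_{L_j}$, whereas the target involves $u\in\cO_{\Kk}^*$ and $\pi_{\Kk}$, and the bridge between them is precisely the mod-$\pi_{\Ll}$ behaviour of the norms $N_{\LjK}(u_j)$ and $N_{\LjK}(\pi_{L_j})$. Concretely, one needs the two norm congruences the paper establishes as Lemmas \ref{unitnorm} and \ref{uniformisernorm}: that for a unit $w\in\cO_{L_j}^*$ one has $N_{\LjK}(w)\equiv w^{e_{\LjK}(p^{tf_{\LjK}}-1)/(p^t-1)}\bmod\pi_{\Ll}$, and the analogous relation expressing the unit part of $N_{\LjK}(\pi_{L_j})$ against $\bigl((-1)^{e_{\LjK}-1}\pi_{\Kk}\bigr)^{f_{\LjK}}$. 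Without these, the factor $u_j^{(p^{tf_{\LjK}}-1)\mathfrak{v}_{L_j}(\beta)}$ in $C_j$ cannot be matched against the contribution of $N_{\LjK}(u_j)$ inside your factorisation of $u$; in particular the exponent $\mathfrak{v}_{L_j}(\beta)$ is fractional, so ``discarding'' via $w^{p^{tf_{\LjK}}-1}\equiv1$ does not apply directly, and the passage from $u_j$-data to $u$-data genuinely requires the norm congruence. You also need the sign identity $(-1)^{e_{\LjK}(e_{\LLj}-1)}(-1)^{e_{\LjK}-1}=(-1)^{e-1}$ to reassemble the $(-1)$-factors, which your toolkit does not mention. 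None of this is deep, but it is exactly where the paper spends most of its effort in this section, and a proof that defers it entirely is incomplete. Proving the two norm congruences as lemmas (using that inertia acts trivially on $\cO_{L_j}^*$ modulo $\pi_{\Ll}$ by tameness, and that $\mathrm{Gal}(\ell_j/k)$ is generated by $x\mapsto x^{p^t}$) and then carrying out the substitution explicitly would close the gap.
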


We will need the following lemmas.
\newtheorem{unitnorm}[abelianthm]{Lemma}
\begin{unitnorm}
 \label{unitnorm}
Let $L/K$ be a finite extension of local fields and let $u\in\cO_{\Ll}^*$. Let $e=e_{\LK}$ be the ramification index of $L/K$ and let $f=f_{\LK}$ be the residue degree of $L/K$. Let $k$, $\ell$ denote the residue fields of $K$ and $L$ respectively and let $\overline{u}$ denote the image of $u$ in $\ell^*$. Then
$$N_{\LK}(u)\equiv N_{\ell/k}(\overline{u})^e\ \mathrm{mod}\ \pi_{\Ll}$$ and
$$N_{\ell/k}(\overline{u})\equiv u^{\frac{p^{tf}-1}{p^t-1}}\ \mathrm{mod}\ \pi_{\Ll}.$$
\end{unitnorm}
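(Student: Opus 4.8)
The plan is to reduce everything to the transitivity of the norm through the maximal unramified subextension $K_{nr}$ of $L/K$, so that $N_{\LK}=N_{K_{nr}/K}\circ N_{L/K_{nr}}$, where $[K_{nr}:K]=f$, the residue field of $K_{nr}$ is again $\ell$, and $L/K_{nr}$ is totally ramified of degree $e$.

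For the first congruence I would begin with the totally ramified step. Choose $u_0\in\cO_{\Kk_{nr}}^*$ whose image in $\ell$ is $\overline u$ (possible since $L/K_{nr}$ is totally ramified, so $\ell$ is also the residue field of $L$); then $w:=u/u_0\in 1+\pi_{\Ll}\cO_{\Ll}$. Writing $w=1+\pi_{\Ll}a$ and expanding $N_{L/K_{nr}}(w)=\prod_\sigma\sigma(w)$ over the $K_{nr}$-embeddings $\sigma$ of $L$ into a fixed algebraic closure, each $\sigma(\pi_{\Ll})$ has positive valuation, so $N_{L/K_{nr}}(w)\in 1+\pi_{\Kk_{nr}}\cO_{\Kk_{nr}}$; in particular $N_{L/K_{nr}}(w)\equiv 1\bmod\pi_{\Ll}$. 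Since $u_0\in K_{nr}$ and $[L:K_{nr}]=e$ we have $N_{L/K_{nr}}(u_0)=u_0^e$, hence $N_{L/K_{nr}}(u)\equiv\overline u^{\,e}\bmod\pi_{\Ll}$, i.e.\ the image of $N_{L/K_{nr}}(u)$ in $\ell$ is $\overline u^{\,e}$. For the unramified step, $K_{nr}/K$ is Galois and reduction induces an isomorphism $\mathrm{Gal}(K_{nr}/K)\xrightarrow{\sim}\mathrm{Gal}(\ell/k)$, so for $z\in\cO_{\Kk_{nr}}^*$ the image of $N_{K_{nr}/K}(z)=\prod_\sigma\sigma(z)$ in $k$ equals $\prod_{\overline\sigma}\overline\sigma(\overline z)=N_{\ell/k}(\overline z)$. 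Applying this with $z=N_{L/K_{nr}}(u)$, whose reduction is $\overline u^{\,e}$, gives that $N_{\LK}(u)$ reduces to $N_{\ell/k}(\overline u^{\,e})=N_{\ell/k}(\overline u)^e$ in $\ell$ (equivalently in $k$), which is the first congruence; it is really a congruence of units modulo $\pi_{\Kk}$, hence a fortiori modulo $\pi_{\Ll}$ since $\pi_{\Kk}\cO_{\Ll}\subseteq\pi_{\Ll}\cO_{\Ll}$.

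For the second congruence there is nothing beyond finite field theory: $\ell/k$ is a degree-$f$ extension of finite fields with $|k|=p^t$, so $\mathrm{Gal}(\ell/k)$ is cyclic generated by $x\mapsto x^{p^t}$, whence
$$N_{\ell/k}(\overline u)=\prod_{j=0}^{f-1}\overline u^{\,p^{tj}}=\overline u^{\,1+p^t+\cdots+p^{t(f-1)}}=\overline u^{\,\frac{p^{tf}-1}{p^t-1}},$$
and since the exponent is a positive integer this is precisely the reduction of $u^{\frac{p^{tf}-1}{p^t-1}}$.

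The only point requiring a little care is the inclusion $N_{L/K_{nr}}\!\left(1+\pi_{\Ll}\cO_{\Ll}\right)\subseteq 1+\pi_{\Kk_{nr}}\cO_{\Kk_{nr}}$, i.e.\ that the norm preserves the first step of the unit filtration; this is standard (see, e.g., \cite{Serre}, Chapter V) and in any case follows directly from the product-of-conjugates expansion used above. Everything else is bookkeeping with norm transitivity and the reduction maps, so I do not expect any genuine obstacle.
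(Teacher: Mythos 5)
Your proof is correct and follows the same strategy as the paper's: decompose $N_{\LK}=N_{\KnrK}\circ N_{\LKnr}$ through the maximal unramified subextension, show $N_{\LKnr}(u)\equiv u^e\bmod\pi_{\Ll}$, identify $N_{\KnrK}$ on units with $N_{\ell/k}$ on residues, and compute the latter via the $p^t$-power Frobenius. The only local variation is in the totally ramified step, where you factor $u=u_0w$ with $u_0\in\cO_{\Kk_{nr}}^*$ and $w$ a principal unit, whereas the paper invokes Proposition \ref{G0} to say $G_0=\mathrm{Gal}(L/K_{nr})$ acts trivially modulo $\pi_{\Ll}$ so that $\prod_{g\in G_0}g(u)\equiv u^e$; your version has the mild advantage of not tacitly assuming $L/K$ is Galois, which the lemma's hypotheses do not actually require.
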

\begin{proof}
$N_{\LK}(u)=N_{\KnrK}\left(N_{\LKnr}(u)\right).$
Recall from Proposition \ref{G0} that
$$\mathrm{Gal}(L/K_{nr})=G_0=\{g\in G\mid \mathfrak{v}_{\scriptscriptstyle{L}}(g(z)-z)\geq 1\quad \forall z\in \mathcal{O}_L\}.$$
Thus for all $g\in \mathrm{Gal}(L/K_{nr})$ we have $g(u)\equiv u\ \mathrm{mod}\ \pi_{\Ll}$ and hence $$N_{\LKnr}(u)\equiv u^e\ \mathrm{mod}\ \pi_{\Ll}.$$
So, recalling that the residue field of $K_{nr}$ is the same as that of $L$, we see that
$$N_{\LK}(u)\equiv N_{\KnrK}(u^e)\equiv N_{\ell/k}(\overline{u})^e \ \mathrm{mod}\ \pi_{\Ll}.$$
But $\mathrm{Gal}(\ell/k)$ is cyclic of order $f$ and generated by the Frobenius automorphism which sends any $x\in \ell$ to $x^{p^t}$. This proves the second claim.
\end{proof}

\newtheorem{uniformisernorm}[abelianthm]{Lemma}
\begin{uniformisernorm}
 \label{uniformisernorm}
Let $L/K$ be a finite tamely ramified extension of local fields. Let $e=e_{\LK}$ be the ramification index of $L/K$ and let $f=f_{\LK}$ be the residue degree of $L/K$. Write $\pi_{\Ll}^e=\pi_{\Kk}u$ for some $u\in\cO_{\Ll}^*$. Let $k$, $\ell$ denote the residue fields of $K$ and $L$ respectively and let $\overline{u}$ denote the image of $u$ in $\ell^*$. Then
$$\frac{N_{\LK}(\pi_{\Ll})}{\left((-1)^{(e-1)}\pi_{\Kk}\right)^f}\equiv N_{\ell/k}(\overline{u})\equiv u^{\frac{p^{tf}-1}{p^t-1}}\ \mathrm{mod}\ \pi_{\Ll}.$$
\end{uniformisernorm}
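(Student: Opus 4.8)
The plan is to compute $N_{\LK}(\pi_{\Ll})$ by passing through the tower $L/K_{nr}/K$, where $K_{nr}$ is the maximal unramified subextension, and then to observe that both displayed congruences fall out. The second congruence $N_{\ell/k}(\overline{u})\equiv u^{(p^{tf}-1)/(p^t-1)}\ \mathrm{mod}\ \pi_{\Ll}$ is precisely the second assertion of Lemma~\ref{unitnorm} applied to the unit $u$ (since $\mathrm{Gal}(\ell/k)$ is cyclic of order $f$, generated by $x\mapsto x^{p^t}$), so the real work lies in the first congruence $N_{\LK}(\pi_{\Ll})\equiv\bigl((-1)^{(e-1)}\pi_{\Kk}\bigr)^f N_{\ell/k}(\overline{u})\ \mathrm{mod}\ \pi_{\Ll}$.

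First I would normalise the uniformiser exactly as in Section~\ref{situation}, a construction which uses only that $L/K_{nr}$ is totally tamely ramified and that $\mu_e\subset K$ (the latter by Proposition~\ref{ethroot}, using $p\nmid e$). Since the residue field of $K_{nr}$ is $\ell$, I may choose a lift $u_0\in\cO_{\Kk_{nr}}^*$ of $\overline{u}$; then $u_1:=u/u_0\equiv1\ \mathrm{mod}\ \pi_{\Ll}$, and Hensel's lemma applied to $x^e-u_1\in\cO_{\Ll}[x]$, whose reduction $x^e-1$ has $1$ as a simple root (as $p\nmid e$), yields $w\in\cO_{\Ll}^*$ with $w\equiv1\ \mathrm{mod}\ \pi_{\Ll}$ and $w^e=u_1$. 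Setting $\alpha:=\pi_{\Ll}/w$ gives a uniformiser of $L$ with $\alpha^e=u_0\pi_{\Kk}$ and $u_0\in\cO_{\Kk_{nr}}^*$; comparing ramification indices over $K_{nr}$ shows $L=K_{nr}(\alpha)$, so $x^e-u_0\pi_{\Kk}$ is the minimal polynomial of $\alpha$ over $K_{nr}$, and taking the product of its roots gives $N_{\LKnr}(\alpha)=(-1)^e(-u_0\pi_{\Kk})=(-1)^{(e-1)}u_0\pi_{\Kk}$. This is the step that produces the sign $(-1)^{(e-1)}$, exactly as in the proof of Lemma~\ref{unram}.

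Next I would propagate $\pi_{\Ll}=w\alpha$ through the two norms. By the first assertion of Lemma~\ref{unitnorm} applied to $L/K_{nr}$, $N_{\LKnr}(w)\equiv\overline{w}^{\,e}=\overline{u_1}=1\ \mathrm{mod}\ \pi_{\Ll}$, and since $N_{\LKnr}(w)\in K_{nr}$ (on which $\pi_{\Kk}$ is also a uniformiser) this forces $N_{\LKnr}(w)=1+\epsilon$ with $\epsilon\in\pi_{\Kk}\cO_{\Kk_{nr}}$. Hence $N_{\LKnr}(\pi_{\Ll})=(1+\epsilon)(-1)^{(e-1)}u_0\pi_{\Kk}$, and applying $N_{\KnrK}$ (a degree-$f$ extension, with $(-1)^{(e-1)},\pi_{\Kk}\in K$) gives $N_{\LK}(\pi_{\Ll})=\bigl((-1)^{(e-1)}\pi_{\Kk}\bigr)^f N_{\KnrK}(u_0)\,N_{\KnrK}(1+\epsilon)$, where $N_{\KnrK}(1+\epsilon)\in1+\pi_{\Kk}\cO_{\Kk}$. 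Finally, Lemma~\ref{unitnorm} applied to the unramified extension $K_{nr}/K$ gives $N_{\KnrK}(u_0)\equiv N_{\ell/k}(\overline{u_0})=N_{\ell/k}(\overline{u})\ \mathrm{mod}\ \pi_{\Kk}$, so dividing through by $\bigl((-1)^{(e-1)}\pi_{\Kk}\bigr)^f$ yields the first congruence modulo $\pi_{\Kk}$, hence modulo $\pi_{\Ll}$.

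I expect the only genuine obstacle to be pinning down the sign $(-1)^{(e-1)}$ rather than merely an $e$th root of unity: raising $N_{\LK}(\pi_{\Ll})$ to the $e$th power and comparing with $N_{\LK}(\pi_{\Kk}u)$ via Lemma~\ref{unitnorm} would only determine $N_{\LK}(\pi_{\Ll})/\bigl(\pi_{\Kk}^f N_{\ell/k}(\overline{u})\bigr)$ modulo $\mu_e$, so one really needs the product-of-conjugates computation for $\alpha$ over $K_{nr}$. The remaining bookkeeping is routine; the one point to keep in mind is that two elements of $\cO_{\Kk}$ are congruent modulo $\pi_{\Kk}$ if and only if they are congruent modulo $\pi_{\Ll}$, so moving between the two moduli costs nothing.
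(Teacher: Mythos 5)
Your proof is correct and takes essentially the same approach as the paper: both decompose $u=u_0u_1$ with $u_0\in\cO_{K_{nr}}^*$ and $u_1\equiv1\bmod\pi_{\Ll}$, use Hensel's lemma ($p\nmid e$) to extract an $e$th root of $u_1$, compute $N_{\LKnr}$ of the adjusted uniformiser via the product of roots of $x^e-u_0\pi_{\Kk}$ to produce the sign $(-1)^{e-1}$, and then invoke Lemma~\ref{unitnorm} to dispose of the remaining unit contributions. The only cosmetic difference is bookkeeping: you apply Lemma~\ref{unitnorm} to $L/K_{nr}$ and push the result through $N_{\KnrK}$, whereas the paper applies Lemma~\ref{unitnorm} once to $L/K$ directly on the Hensel root.
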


\begin{proof}
Write $K_{nr}$ for the maximal unramified subextension of $L/K$. Reducing $\mathrm{mod}\ \pi_{\Ll}$ and recalling that the residue field of $L$ is the same as that of $K_{nr}$, we see that $u=u_1u_0$ where $u_1\equiv 1\ \mathrm{mod}\ \pi_{\Ll}$ and $u_0\in \cO_{\Kk_{nr}}^*$.

Now consider the polynomial $x^{e}-u_1$, which reduces to $x^{e}-1\ \mathrm{mod}\ \pi_{\Ll}$.
We know from Proposition \ref{ethroot} that the $e$th roots of unity are contained in $K$ and therefore in $L$. Since $e$ is coprime to the residue characteristic, we can apply Hensel's lemma and lift the roots of the polynomial $\mathrm{mod}\ \pi_\Ll$ to $e$th roots of $u_1$ in $L$.

Let $v_1\in \cO_{\Ll}^*$ be such that $v_1^e=u_1$. Thus
$$\left(\frac{\pi_{\Ll}}{v_1}\right)^e=u_0\pi_{\Kk}$$ and
$$N_{L/K_{nr}}\left(\frac{\pi_{\Ll}}{v_1}\right)=(-1)^{e-1}u_0\pi_{\Kk}.$$
Therefore
$$\frac{N_{\LK}(\pi_{\Ll})}{\left((-1)^{(e-1)}\pi_{\Kk}\right)^f}=N_{\LK}(v_1)N_{\KnrK}(u_0).$$

Lemma \ref{unitnorm} tells us that $N_{\LK}(v_1)\equiv N_{\ell/k}(\overline{v_1^e})\ \mathrm{mod}\ \pi_{\Ll}$.

But $v_1^e\equiv u_1\equiv 1 \ \mathrm{mod}\ \pi_{\Ll}$.

Finally, note that the residue field of $K_{nr}$ is the same as that of $L$ and $u_0\equiv u\ \mathrm{mod}\ \pi_{\Ll}$.
Hence
$$N_{\KnrK}(u_0)\equiv N_{\ell/k}(\overline{u})\ \mathrm{mod}\ \pi_{\Ll}$$
as required.
\end{proof}

We now return to the proof of Theorem \ref{abelianthm}. We proceed by induction on $r$, the number of components in the decomposition of the Galois group $G=\mathrm{Gal}(L/K)$ into a direct product of cyclic groups of prime-power order. In the case where the extension $L/K$ is cyclic and of prime power degree we are reduced to Theorem \ref{generalthm2}.

Suppose we know that the congruences of Theorem \ref{abelianthm} hold for any extension whose Galois group decomposes into a direct product of at most $s-1$ cyclic groups of prime-power order. Now suppose we have an extension of local fields $L/K$ such that
$$G=\mathrm{Gal}(L/K)\cong C_{q_1^{n_1}}\times C_{q_2^{n_2}}\times \ldots \times C_{q_s^{n_s}}$$ where the $q_j$ are (not necessarily distinct) primes.

Let $H=C_{q_1^{n_1}}\times C_{q_2^{n_2}}\times \ldots \times C_{q_{s-1}^{n_{s-1}}}$ so $G=H\times C_{q_s^{n_s}}$.

Let $b\in K^*$ and let $\sigma=\theta_{\LK}(b)$ be the image of $b$ under the local reciprocity map $\theta_{\LK}$.

Write $\sigma=\sigma_{\Hh}\circ\sigma_s$ where $\sigma_{\Hh}\in H$ and $\sigma_s\in C_{q_s^{n_s}}$.

Let $b_{\Hh}\in \left(L^{\Hh}\right)^*$ be such that $\theta_{\LLH}(b_{\Hh})=\sigma_{\Hh}$. Similarly, let $b_s\in L_s^*$ be such that $\theta_{\LLs}(b_s)=\sigma_s$. Recall from Proposition \ref{diagram} that $$\theta_{\LLH}(b_{\Hh})=\theta_{\LK}\left(N_{\LHK}(b_{\Hh})\right)\ \ \textrm{and}\ \ \theta_{\LLs}(b_s)=\theta_{\LK}\left(N_{\LsK}(b_s)\right).$$

Thus
$$\theta_{\LK}(b)=\sigma=\sigma_{\Hh}\circ\sigma_s=\theta_{\LK}\left(N_{\LHK}(b_{\Hh})N_{\LsK}(b_s)\right).$$

Therefore, by definition of the local reciprocity map,
$$b=N_{\LHK}(b_{\Hh})N_{\LsK}(b_s)N_{\LK}(x)$$
for some $x\in L^*$.

But $N_{\LK}(x)=N_{\LsK}\left(N_{\LLs}(x)\right)$ so we may replace $b_s$ by $b_s N_{\LLs}(x)$ and assume without loss of generality that
\begin{equation*}
\label{b decomposition}
b=N_{\LHK}(b_{\Hh})N_{\LsK}(b_s).
\end{equation*}

Pick uniformisers $\pi_{\Kk}$, $\pi_{\Hh}$ and $\pi_s$ of $K$, $L^{\Hh}$ and $L_s$ respectively and write $$b_{\Hh}=\pi_{\Hh}^iv,\ \ b_s=\pi_s^jw$$
for some $i,j\in \bbN,\ v\in\cO_{L^{\Hh}}^*,\ w\in\cO_{L_s}^*.$

Write $f_{\Hh}=f_{\LHK}$, $f_s=f_{\LsK}$ for the residue degrees of $L^{\Hh}/K$ and $L_s/K$ respectively. Similarly, write $e_{\Hh}=e_{\LHK}$, $e_s=e_{\LsK}$ for the ramification indices of $L^{\Hh}/K$ and $L_s/K$ respectively.
Thus
\begin{eqnarray*}
b & = & N_{\LHK}(\pi_{\Hh}^iv)N_{\LsK}(\pi_s^jw)\\
& = & \pi_{\Kk}^{(f_{\Hh}i+f_sj)}\left(\frac{N_{\LHK}(\pi_{\Hh})}{\pi_{\Kk}^{f_{\Hh}}}\right)^i N_{\LHK}(v)\left(\frac{N_{\LsK}(\pi_s)}{\pi_{\Kk}^{f_s}}\right)^j N_{\LsK}(w).
\end{eqnarray*}

Observe that
$$\mathfrak{v}_{\Kk}\left(N_{\LHK}(\pi_{\Hh})\right)=[L^{\Hh}:K]\mathfrak{v}_{\Kk}(\pi_{\Hh})=e_{\Hh}f_{\Hh}\frac{1}{e_{\Hh}}=f_{\Hh}=\mathfrak{v}_{\Kk}(\pi_{\Kk}^{f_{\Hh}})$$
so $\frac{N_{\LHK}(\pi_{\Hh})}{\pi_{\Kk}^{f_{\Hh}}}$ is a unit and we define
$$\cU_{\Hh}=\left(\frac{N_{\LHK}(\pi_{\Hh})}{\pi_{\Kk}^{f_{\Hh}}}\right)^i N_{\LHK}(v)\in\cO_{\Kk}^*.$$
Likewise, define
$$\cU_s=\left(\frac{N_{\LsK}(\pi_s)}{\pi_{\Kk}^{f_s}}\right)^j N_{\LsK}(w)\in\cO_{\Kk}^*$$
so that
\begin{equation}
\label{b}
b=\pi_{\Kk}^{(f_{\Hh}i+f_sj)}\cU_{\Hh}\cU_s.
\end{equation}

Write $\epsilon_{\Hh}=e_{\LLH}$, $\epsilon_{\LLs}=e_{\LLs}$ for the ramification indices of $L/L^{\Hh}$ and $L/L_s$ respectively. Let $e=e_{\LK}$ denote the ramification index of $L/K$. Thus
$$e_{\Hh}\epsilon_{\Hh}=e=e_s\epsilon_s.$$

Note that the residue field of $L^H$ has $p^{tf_{\Hh}}$ elements and the residue field of $L_s$ has $p^{tf_s}$ elements.

Now we apply the induction hypothesis to see that for all $\beta\in L^*$
\begin{eqnarray*}
\frac{\sigma_{\Hh}(\beta)}{\beta}\equiv\frac{\beta^{(p^{tf_{\Hh}i}-1)}}{((-1)^{\epsilon_{\Hh}-1}\pi_{\Hh})^{(p^{tf_{\Hh}i}-1)\mathfrak{v}_{\LH}(\beta)}v^{(p^{tf_{\Hh}}-1)\mathfrak{v}_{\LH}(\beta)}}\ \mathrm{mod}\ \pi_{\Ll}
\end{eqnarray*}
and
\begin{eqnarray*}
\frac{\sigma_s(\beta)}{\beta}
 & \equiv & \frac{\beta^{(p^{tf_sj}-1)}}{((-1)^{\epsilon_s-1}\pi_s)^{(p^{tf_sj}-1)\mathfrak{v}_{\Ls}(\beta)}w^{(p^{tf_s}-1)\mathfrak{v}_{\Ls}(\beta)}}\ \mathrm{mod}\ \pi_{\Ll}.
\end{eqnarray*}
Note that $\mathfrak{v}_{\LH}=e_{\Hh}\mathfrak{v}_{\Kk}$ and $\mathfrak{v}_{\Ls}=e_s\mathfrak{v}_{\Kk}$.

Also, $\pi_{\Hh}^{e_{\Hh}}=\pi_{\Kk}u_{\Hh}$ for some $u_{\Hh}\in \cO_{\LH}^*$ and $\pi_{s}^{e_s}=\pi_{\Kk}u_s$ for some $u_s\in \cO_{\Ls}^*$.

Hence
\begin{eqnarray*}
\frac{\sigma_s(\beta)}{\beta} & \equiv & \frac{\beta^{(p^{tf_sj}-1)}}{((-1)^{\epsilon_s-1}\pi_s)^{(p^{tf_sj}-1)\mathfrak{v}_{\Ls}(\beta)}w^{(p^{tf_s}-1)\mathfrak{v}_{\Ls}(\beta)}}\\
 & \equiv & \frac{\beta^{(p^{tf_sj}-1)}}{((-1)^{e_s(\epsilon_s-1)}\pi_{\Kk}u_s)^{(p^{tf_sj}-1)\mathfrak{v}_{\Kk}(\beta)}w^{e_s(p^{tf_s}-1)\mathfrak{v}_{\Kk}(\beta)}}\ \mathrm{mod}\ \pi_{\Ll}
\end{eqnarray*}

Using Lemma \ref{unitnorm}, we see that $N_{\LsK}(w)\equiv w^{e_s\frac{p^{tf_s}-1}{p^t-1}} \mathrm{mod}\ \pi_{\Ll}$.

Hence
$$\frac{\sigma_s(\beta)}{\beta} \equiv \frac{\beta^{(p^{tf_sj}-1)}}{((-1)^{e_s(\epsilon_s-1)}\pi_{\Kk}u_s)^{(p^{tf_sj}-1)\mathfrak{v}_{\Kk}(\beta)}N_{\LsK}(w)^{(p^t-1)\mathfrak{v}_{\Kk}(\beta)}}\ \mathrm{mod}\ \pi_{\Ll}.$$

Now write
$$\frac{p^{tf_sj}-1}{p^{tf_s}-1}=1+p^{tf_s}+p^{2tf_s}+\dots +p^{(j-1)tf_s}$$
and recall that $p^{tf_s}$ is the size of the residue field of $L_s$.

Hence \mbox{$x^{p^{tf_s}}\equiv x \ \mathrm{mod}\ \pi_{\Ll}$} for any $x\in\cO_{\Ls}^*$.

Therefore $u_s^{(p^{tf_sj}-1)\mathfrak{v}_{\Kk}(\beta)}\equiv u_s^{j(p^{tf_s}-1)\mathfrak{v}_{\Kk}(\beta)}\ \mathrm{mod}\ \pi_{\Ll}$.
Now apply \mbox{Lemma \ref{uniformisernorm}} to see that
\begin{eqnarray*}
u_s^{j(p^{tf_s}-1)\mathfrak{v}_{\Kk}(\beta)} & \equiv & \left(\frac{N_{\LsK}(\pi_s)}{\left((-1)^{(e_s-1)}\pi_{\Kk}\right)^{f_s}}\right)^{j(p^t-1)\mathfrak{v}_{\Kk}(\beta)}\\
& \equiv & (-1)^{(e_s-1)(p^{tf_sj}-1)\mathfrak{v}_{\Kk}(\beta)}\left(\frac{N_{\LsK}(\pi_s)}{\pi_{\Kk}^{f_s}}\right)^{j(p^t-1)\mathfrak{v}_{\Kk}(\beta)}\mathrm{mod}\ \pi_{\Ll}.
\end{eqnarray*}
Thus
\begin{eqnarray*}
\frac{\sigma_s(\beta)}{\beta} & \equiv &  \frac{\beta^{(p^{tf_sj}-1)}}{\left((-1)^{(e-1)}\pi_{\Kk}\right)^{(p^{tf_sj}-1)\mathfrak{v}_{\Kk}(\beta)}\cU_s^{(p^t-1)\mathfrak{v}_{\Kk}(\beta)}}\ \mathrm{mod}\ \pi_{\Ll}.
\end{eqnarray*}
Similarly,
\begin{eqnarray*}
\frac{\sigma_{\Hh}(\beta)}{\beta} & \equiv &  \frac{\beta^{(p^{tf_{\Hh}i}-1)}}{\left((-1)^{(e-1)}\pi_{\Kk}\right)^{(p^{tf_{\Hh}i}-1)\mathfrak{v}_{\Kk}(\beta)}\cU_{\Hh}^{(p^t-1)\mathfrak{v}_{\Kk}(\beta)}}\ \mathrm{mod}\ \pi_{\Ll}.
\end{eqnarray*}
Now combine the congruences above with the decomposition of $\sigma$ to see that
\begin{eqnarray*}
\frac{\sigma(\beta)}{\beta} & = & \frac{\sigma_{\Hh}\circ\sigma_s(\beta)}{\beta}\\
& = & \sigma_{\Hh}\left(\frac{\sigma_s(\beta)}{\beta}\right)\frac{\sigma_{\Hh}(\beta)}{\beta}\\
& \equiv & \frac{\beta^{(p^{tf_sj}-1)}}{\left((-1)^{(e-1)}\pi_{\Kk}\right)^{(p^{tf_sj}-1)\mathfrak{v}_{\Kk}(\beta)}\cU_s^{(p^t-1)\mathfrak{v}_{\Kk}(\beta)}}\left(\frac{\sigma_{\Hh}(\beta)}{\beta}\right)^{p^{tf_sj}}\\
& \equiv & \frac{\beta^{(p^{t(f_{\Hh}i+f_sj)}-1)}}{\left((-1)^{(e-1)}\pi_{\Kk}\right)^{\left(p^{t(f_{\Hh}i+f_sj)}-1\right)\mathfrak{v}_{\Kk}(\beta)}\left(\cU_s \cU_{\Hh}^{p^{tf_sj}}\right)^{(p^t-1)\mathfrak{v}_{\Kk}(\beta)}}\\
& \equiv & \frac{\beta^{(p^{t(f_{\Hh}i+f_sj)}-1)}}{\left((-1)^{(e-1)}\pi_{\Kk}\right)^{\left(p^{t(f_{\Hh}i+f_sj)}-1\right)\mathfrak{v}_{\Kk}(\beta)}\left(\cU_s \cU_{\Hh}\right)^{(p^t-1)\mathfrak{v}_{\Kk}(\beta)}} \ \mathrm{mod}\ \pi_{\Ll}
\end{eqnarray*}
where the last congruence holds because $\cU_{\Hh}\in\cO_{\Kk}^*$ and therefore $$\cU_{\Hh}^{p^t}\equiv \cU_{\Hh}\ \mathrm{mod}\ \pi_{\Kk}$$ since $p^t=|k|$.
Comparing with \eqref{b} concludes the proof of Theorem \ref{abelianthm}.
\begin{flushright}
$\Box$
\end{flushright}

For a fixed element $b\in K^*/N_{\LK}(L^*)$, Theorem \ref{abelianthm} gives congruences which hold for all $\beta\in L^*$. We know from Remark \ref{suffice} that these congruences for all $0\neq \beta\in\cO_{\Kk}^*$ are enough to determine $\theta_{\LK}(b)$ uniquely.

In order to compute local reciprocity for a tame finite abelian extension $L/K$, it suffices to apply Theorem \ref{abelianthm} to compute the images under $\theta_{\LK}$ of a set of elements which generate $K^*/N_{\LK}(L^*)$ as a finite group.

\newpage

\end{document}